\newcommand{\EE}{\mathbb E}
\newcommand{\Z}{\mathbb Z}
\newcommand{\R}{\mathbb R}
\newcommand{\bna}{\begin{eqnarray}}
\newcommand{\ena}{\end{eqnarray}}
\newcommand{\ba}{\begin{eqnarray*}}
\newcommand{\ea}{\end{eqnarray*}}
\newcommand{\bs}[1]{}
\newtheorem{lemma}{Lemma}
\newtheorem{proposition}{Proposition}
\newtheorem{theorem}{Theorem}
\newtheorem{corollary}{Corollary}
\newtheorem{remark}{Remark}
\newtheorem{definition}{Definition}
\newtheorem{conjecture}{Conjecture}
\newcommand{\N}{{\mathcal N}}
\def\p{{\bf p}}
\def\q{{\bf q}}
\begin{document}
\title{Packings of equal disks in a square torus}

\author{Robert Connelly, Matthew Funkhouser, Vivian Kuperberg, 
and 
Evan Solomonides }
\maketitle 

\begin{abstract}  Packings of equal disks in the plane are known to have density at most $\pi/\sqrt{12}$, although this density is never achieved in the square torus, which is what we call the plane modulo the square lattice.  
We find packings of disks in a square torus that we conjecture to be the most dense for certain numbers of packing disks, using continued fractions to approximate $1/\sqrt{3}$ and $2-\sqrt{3}$.   We also define a constant to measure the efficiency of a packing motived by a related constant due to Markov for continued fractions. One idea is to use the unique factorization property of Gaussian integers to prove that there is an upper bound for the Markov constant  for grid-like packings.  By way of contrast, we show that an upper bound by Peter Gruber \cite{Gruber-optimal, Gruber-2-manifolds} for the error for the limiting density of a packing of equal disks in a planar square, which  is on the order of $1/\sqrt{N}$, is the best possible, whereas for our examples for the square torus, the error for the limiting density is on the order of $1/N$, where $N$ is the number of packing disks.

{\bf Keywords: } packings, square torus, density, Markov constant, continued fractions.

\end{abstract}
\section{Introduction} \label{section:introduction}

It is well-known that the most dense packing of equal disks in the plane is achieved by the hexagonal lattice packing as in Figure \ref{fig:Dense}, which is an old theorem of Thue \cite{Thue}.  See also the simpler proof of L\'aszlo Fejes T\'oth \cite{Fejes-Toth-book} as well as a more recent proof due to  Hai-Chau Chang and Lih-Chung Wang \cite{Chang-Wang}.  Nevertheless, when there are an infinite number of disks in the whole plane, there are an infinite number of non-congruent packings that achieve that maximal density of $\pi/\sqrt{12} = 0.906899\dots$.  Simply remove any finite number of disks from the packing of Figure \ref{fig:Dense}, for example.  

\begin{figure}[h]
    \begin{center}
        \includegraphics[width=0.5\textwidth]{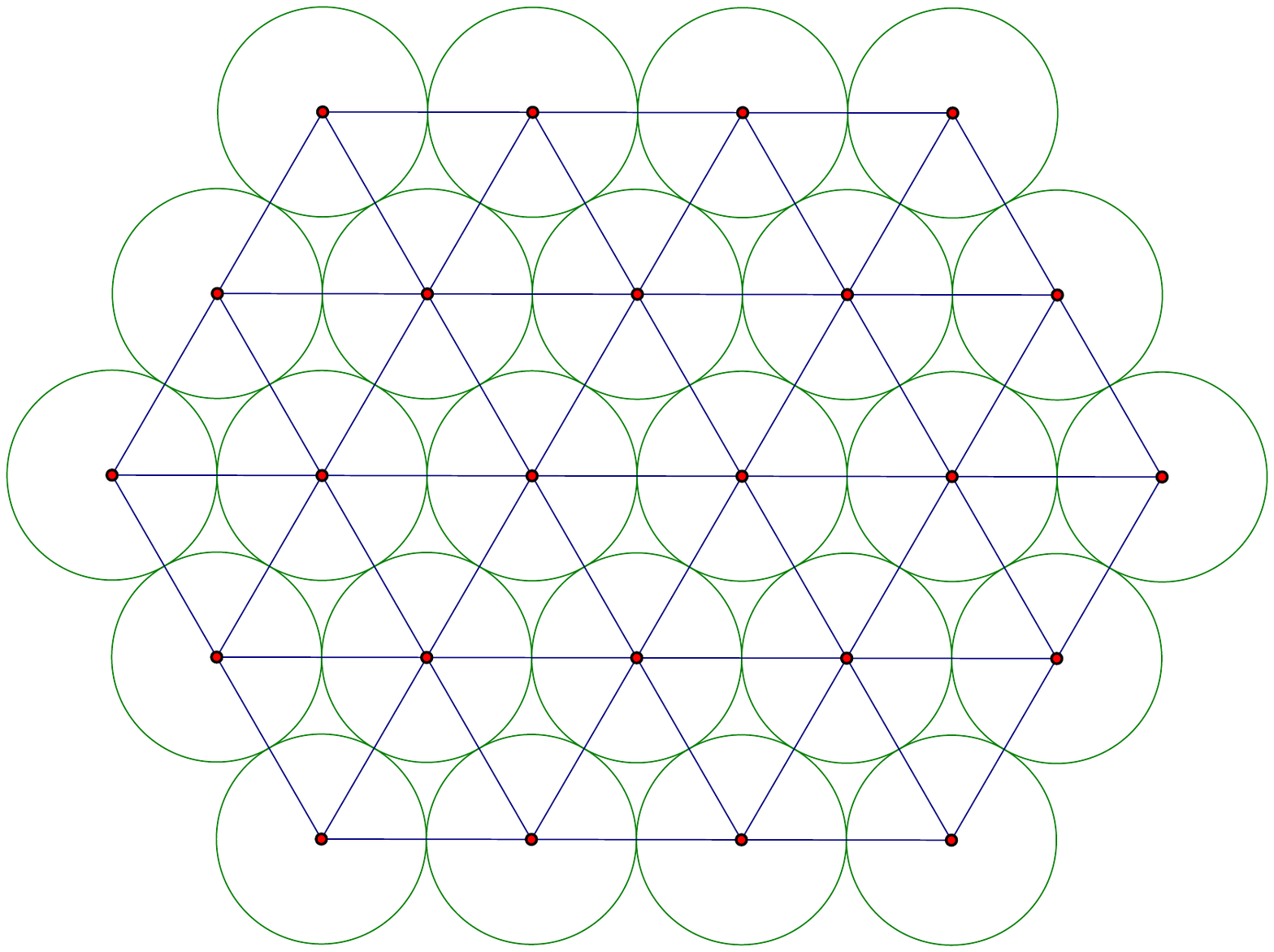}%
        \end{center}
    \caption{}
    \label{fig:Dense}
    \end{figure}

However, when the (equal) packing disks are confined in a compact container such as a torus or a polygon in the plane, the most dense configurations are much less numerous.  There is a lot of literature conjecturing or proving the maximal density and the corresponding configurations achieving that maximal density for a planar square as in Section 1.6 of \cite{Brass-Moser-Pach} and especially the book by Szab\'o et al. \cite{Szabo}.  

In \cite{Connelly-Dickinson}, for the torus given by a triangular lattice, it is shown that if the number of disks is of the form $N=a^2 +ab + b^2$, for $a, b$ integers, then the maximal density $\pi/\sqrt{12}$ can be achieved by embedding a triangular lattice into the the torus as a sublattice. Furthermore if $N$ is NOT of the form $a^2 +ab + b^2$, it is conjectured that the maximal density is at most $(N/(N+1))\pi/\sqrt{12}$.  In other words, in this case, if $\delta_{\Delta}(N)$ is the maximal density for $N$ disks in the triangular lattice torus, the conjecture maintains that the error satisfies
\[
\frac{\pi}{\sqrt{12}}-\delta_{\Delta}(N) \ge \left(\frac{\pi}{\sqrt{12}}\right)\frac{1}{N+1}.
\]

In the case of the torus given by the square lattice (the \emph{square torus}) we show, Theorem \ref{thm:limit}, that the limiting density $\pi/\sqrt{12}$ cannot be achieved, it is always smaller.  Furthermore, we show that there is a constant $K_0$ and there are two infinite sequences of numbers $N$ and corresponding packings with maximum packing density $\delta(N)$ such that 
\[
\frac{\pi}{\sqrt{12}}-\delta(N) \le\frac{1}{K_0 N}.
\]
We also conjecture that there is another constant $K_1$ such that for any packing of the square torus with density $\delta(N)$, the following holds:
\begin{equation}\label{eqn:Markov}
\frac{\pi}{\sqrt{12}}-\delta(N) \ge\frac{1}{K_1 N}.
\end{equation}\label{eqn:Markov}

By way of contrast, for packings of equal disks in a square (or any compact convex set in the plane) with maximum density $\delta_{\Box}(N)$, Theorem 9(i) of Peter Gruber \cite{Gruber-optimal} implies that there is a constant $K_2$ such that  
\[
\frac{\pi}{\sqrt{12}}-\delta_{\Box}(N) \le\frac{1}{K_2\sqrt{N}}.
\]
See \cite{Gruber-2-manifolds} for the original proofs. We give a proof here, Theorem \ref{thm:best-square}, that Gruber's bound for a square is the best possible in the sense that there is another constant $K_3$ such that
\[
\frac{\pi}{\sqrt{12}}-\delta_{\Box}(N) \ge\frac{1}{K_3\sqrt{N}}.
\]
Our proof here uses a theorem of Norman Oler \cite{Oler} (see a shorter proof by Folkman and Graham \cite{Graham}) in Section \ref{section:limiting-density} using a proof of Thue's theorem due to Chang and Wang \cite{Chang-Wang}.  

For $N \le 8$ the most dense packings of equal disks in a square torus are known.  For $N\le 5$ the optimal configurations and their densities where shown by Dickinson, et. al. \cite{Will-square}, and for $N=6,7,8$ by Musin and Nikitenko \cite{Musin}.  We show these packings in Subsection \ref{subsection:Musin}.

Considering the estimates above and motivated from basic results in the theory of continued fractions, we define the \emph{Markov constant for a packing} as the number 
\begin{equation}\label{eqn:Markov1}
M(N)=\frac{\pi}{2}\frac{1}{(\frac{\pi}{\sqrt{12}}-\delta(N))N},
\end{equation}
where $\delta(N)$ is the packing density.
The larger $M(N)$, the ``better" the packing is.  (The constant $\frac{\pi}{2}$ is inserted to so it can be compared with number theory constants.)  We suspect that if $M(N)$ is large enough, independent of $N$, then the packing either is the most dense for that $N$ or it is very close to a maximally dense packing.  If the constant $K_1$ in the inequality (\ref{eqn:Markov}) exists, then there is an upper bound for the Markov constant $M(N)$. In Section \ref{section:Best-Packings} we show the two sequences of packings that have a large Markov constant $M(N)$, and we conjecture that these best packings are maximally dense for the appropriate values of $N$.  Furthermore, we show another pair of sequences of packings that are not maximally dense, but still have such a high Markov constant that any one of them should be quite close to a maximally dense packing.  All of these packings have the property that the graph of their packing configuration, where centers of the packing disks are joined by an edge if the disks touch, is a subgraph of a corresponding triangular lattice.  Indeed, on Page 9 of \cite{Gruber-optimal}, just after Theorem 10, Peter Gruber implies that if the radius of the circles is small enough and the density is very close to $\frac{\pi}{\sqrt{12}}$ the packing is almost regular hexagonal, which may imply the subgraph property above.

One of our basic ideas is to use the theory of continued fractions as in Subsection \ref{section:Continued-Fractions} to find highly dense packings.  This is very closely related to a similar idea for packings in a square by Kari Nurmela, Patric Ostergard, and Rainer ausdem Spring in \cite{Nurmela}.  There are some direct connections as explained in Section \ref{section:square-packings}.

\section{Best or Nearly Best Packings} \label{section:Best-Packings}

\subsection{Rigid Packings} \label{subsection:Rigid-Packings}

Consider a packing of circular disks in the square torus as shown in Figure \ref{fig:example}.  We think of the centers placed at the point $(0,0)$ and thereafter translated by the vectors $(a,b)$ and $(c,d)$ which determine a sublattice of the unit lattice.  The lattice $\Lambda_0(N)$ is chosen so that its generators are $(N,0)$ and $(0,N)$, and it is a sublattice of the $(a,b),(c,d)$ lattice. This is shown in Figure \ref{fig:example}. A fundamental region that determines the square torus is indicated by the dashed square.
\begin{figure}[h]
    \begin{center}
        \includegraphics[width=0.6\textwidth]{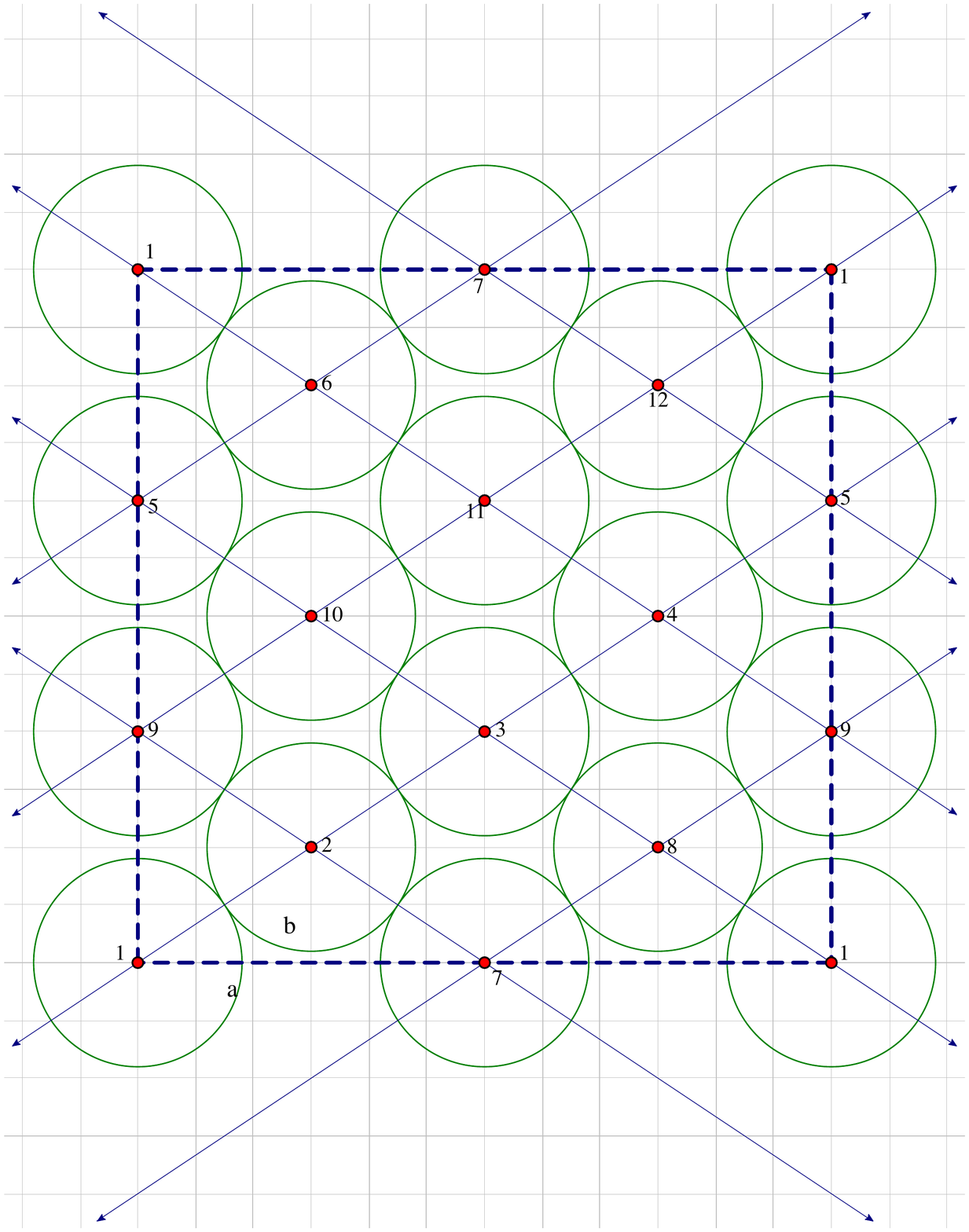}%
        \end{center}
    \caption{This is the square torus packing of $N=12$ disks determined by $(3,2)$ and $(-3,2)$.  The disks are labeled as one proceeds along the geodesic determined by $(3,2)$.}
    \label{fig:example}
    \end{figure}
    
\subsubsection{Type I Packings} \label{subsection:Type-I-Packings}
  
The vectors $(a,b)$ and $(-a,b)$ determine a sublattice $\Lambda_1(a,b)$ of the unit lattice given by $(1,0)$ and $(0,1)$.  A packing at the vertices of this lattice is called a \emph{Type I packing} if the lattice is symmetric about the $x$-axis and symmetric about the $y$-axis.  The index of $\Lambda_0(2ab)$ in the unit lattice is $(2ab)^2$, and the index of $\Lambda_1(a,b)$ in the unit lattice is the determinant:
\[
\begin{vmatrix}
    a       & b \\
    -a       & b \\
\end{vmatrix} =2ab.
\]
So the index of $\Lambda_0(N)$ in $\Lambda_1(a,b)$ is also $N=2ab$, and the number of disks in the square lattice $\Lambda_0(a,b)$ is $2ab$. 

If $b/a > \tan 30^{\circ}=1/\sqrt{3}$, the diameter of the disks can be chosen to be $\sqrt{a^2+b^2}$, the radius of the disks is $\sqrt{a^2+b^2}/2$, and the density of the packing is 
\[
\delta_1(a,b)=\frac{\pi}{4} (a^2+b^2)2ab/(2ab)^2=\frac{\pi}{8} (a^2+b^2)/ab=\frac{\pi}{8}\left(\frac{a}{b}+\frac{b}{a}\right).
\]
So the error is 
\[
\frac{\pi}{2\sqrt{3}}-\delta_1(a,b)=\frac{\pi}{2\sqrt{3}}-\frac{\pi}{8}\left(\frac{a}{b}+\frac{b}{a}\right)=\frac{\pi}{2}\left[\frac{1}{\sqrt{3}}-\frac{1}{4}\left(\frac{a}{b}+\frac{b}{a}\right)\right].
\]
It is natural to see that the closer $b/a$ is to $1/\sqrt{3}$, but greater than $1/\sqrt{3}$, the larger the density.  This is a construction of Type I packings.

We can also handle the case when the $b/a < 1/\sqrt{3}$.  Then the triangle $(0,0), (a,b), (a,-b)$ has shortest side of length $2b$, and so we can insert circles of radius $b$ at each vertex to obtain a packing, although it will not be rigid, (and thus its density can be improved slightly).  In this case its density is 
\[
\delta_1'(a,b)=\pi b^2 2ab/(2ab)^2=\frac{\pi}{2} \frac{b}{a}.
\]
So the error is 
\[
\frac{\pi}{2\sqrt{3}}-\delta'_1(a,b)=\frac{\pi}{2\sqrt{3}}-\frac{\pi}{2}\frac{b}{a}=\frac{\pi}{2}\left[\frac{1}{\sqrt{3}}-\frac{b}{a}\right].
\]

\subsubsection{Type II Packings} \label{subsection:Type-II-Packings}

The vectors $(a,b)$ and $(b,a)$ determine a sublattice $\Lambda_2(a,b)$ of the unit lattice determined as before. A packing at the vertices of this lattice is called a \emph{Type II packing} if the lattice is symmetric about the $x=y$ line and symmetric about the $x=-y$ line.   The index of $\Lambda_0(a,b)$ in the unit lattice is $(a^2-b^2)^2$, and the index of $\Lambda_2(a,b)$ in the unit lattice is the determinant:
\[
\begin{vmatrix}
    a       & b \\
    b       & a \\
\end{vmatrix} =a^2-b^2.
\]
So the index of $\Lambda_0(a,b)$ in $\Lambda_2(a,b)$ is also $a^2-b^2$, and the number of disks in the square lattice $\Lambda_0(a,b)$ is $N=a^2-b^2$. 

If $b/a < \tan 15^{\circ}=2-\sqrt{3}$, the diameter of the disks can be chosen to be $\sqrt{a^2+b^2}$, the radius of the disks is $\sqrt{a^2+b^2}/2$, and the density of the packing is 
\[
\delta_2(a,b)=\frac{\pi}{4} \frac{(a^2+b^2)(a^2-b^2)}{(a^2-b^2)^2}=\frac{\pi}{4} \left(\frac{a^2+b^2}{a^2-b^2}\right).
\]
So the error is 
\[
\frac{\pi}{2\sqrt{3}}-\delta_2(a,b)=\frac{\pi}{2\sqrt{3}}-\frac{\pi}{4} \left(\frac{a^2+b^2}{a^2-b^2}\right)=\frac{\pi}{2}\left[\frac{1}{\sqrt{3}}-\frac{1}{2}\left(\frac{a^2+b^2}{a^2-b^2}\right)\right].
\]
 \begin{figure}[h]
    \begin{center}
        \includegraphics[width=0.6\textwidth]{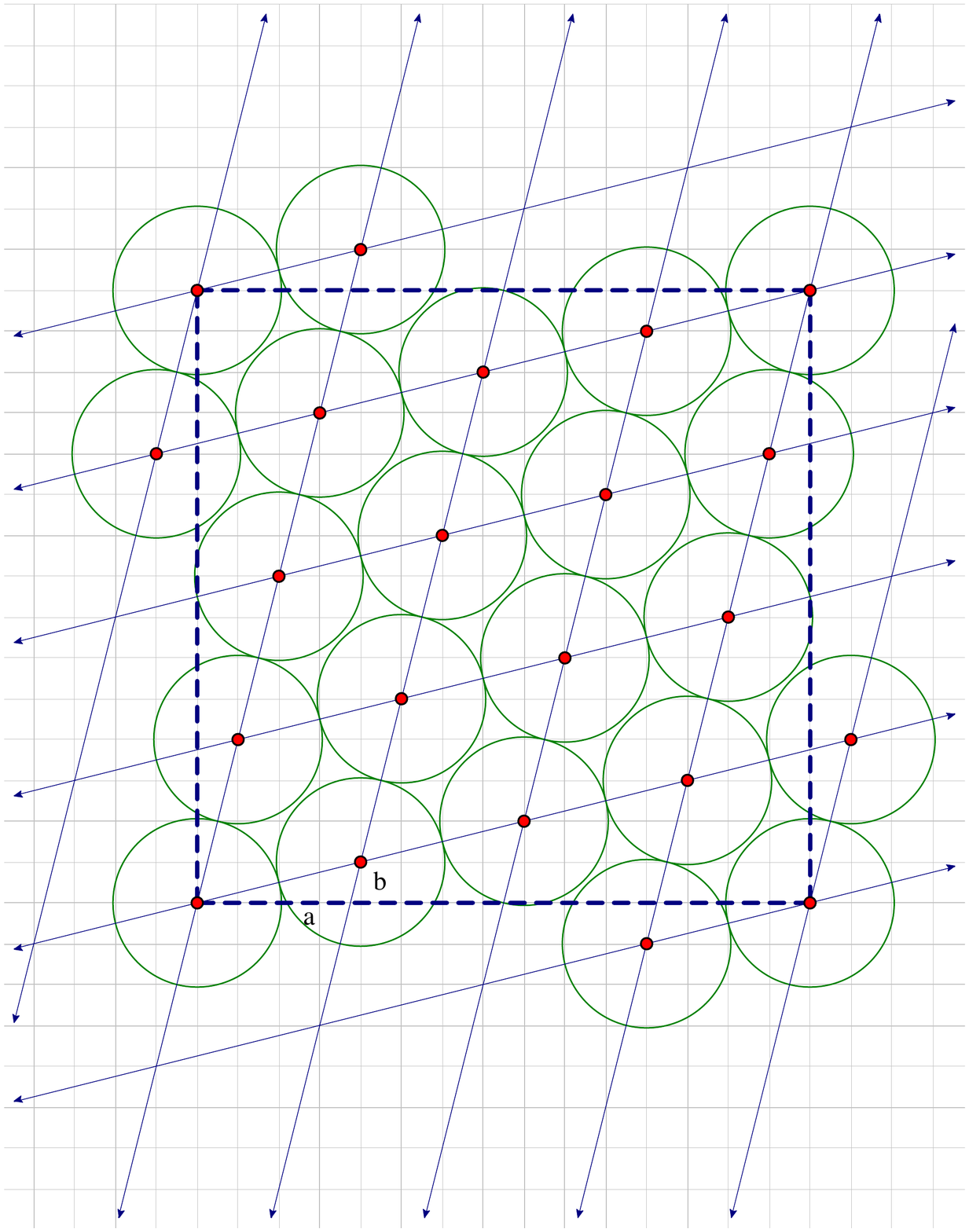}%
        \end{center}
    \caption{This is a rigid Type II square torus packing of $15$ disks determined by the lattice $\Lambda_2(4,1)$.  One can see the gap in the packing disks that are $(-4,4)$ apart.}
    \label{fig:15'}
    \end{figure}
    
These are Type II packings.
Figure \ref{fig:example} is a Type I packing of $12$ disks, and Figure \ref{fig:15'} (as well as Figure \ref{fig:15}) is a Type II packing of $15$ disks.

We can also handle the case when $b/a > \tan 15^{\circ}=2-\sqrt{3}$.  In this case the shortest side of the triangle $(0,0), (a,b), (b,a)$ is the side between $(a,b)$ and $(b,a)$, which has length $\sqrt{2(b-a)^2)}=\sqrt{2}(a-b)$.  So if we place a circle at each vertex with radius $\sqrt{2}(a-b)/2$ we will have a packing, (although it will not be rigid).  The density of this packing is 
\[
\delta'_2(a,b)=\frac{\pi}{2} \frac{(a-b)^2(a^2-b^2)}{(a^2-b^2)^2}=\frac{\pi}{2} \left(\frac{a-b}{a+b}\right).
\] 
So the error is 
\[
\frac{\pi}{2\sqrt{3}}-\delta'_2(a,b)=\frac{\pi}{2\sqrt{3}}-\frac{\pi}{2} \left(\frac{a-b}{a+b}\right)=\frac{\pi}{2}\left[\frac{1}{\sqrt{3}}-\frac{a-b}{a+b}\right].
\]
We will see later in Figure \ref{fig:8} how this packing for $a=3$, $b=1$ can be flexed to a rigid configuration that is known to be maximally dense by \cite{Musin}.
\subsubsection{Rigidity} \label{subsubsection:Rigidity}

Before we go further, we record some of the rigidity information about these sorts of packings.

\begin{proposition}\label{prop:rigid} Let $(a, b)$ and $(c,d)$, $a,b,c,d$ integers, determine a lattice 
$\Lambda_1$ in the plane and let $(ad-bc, 0)$ and $(0,ad-bc)$, $ad-bc \ne 0$, determine a square lattice $\Lambda_0$.  Place struts along the edges corresponding to the generators $(a, b)$ and $(c,d)$.  (In other words, those distances are allowed to increase, but not decrease in distance.)  Then the resulting lattice points form the centers of a rigid packing if and only if the following hold:
\begin{enumerate}[(i.)] 
\item \label{equal-length} The lengths $a^2+b^2 = c^2+d^2$.
\item \label{relatively-prime} The integers $a, b$ are relatively prime and the integers $c, d$ are relatively prime.
\item \label{packing-constraint} The lengths $(a \pm c)^2+(b \pm d)^2 > a^2+b^2$.
\end{enumerate}
\end{proposition}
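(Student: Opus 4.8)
The plan is to analyze rigidity through the standard framework for tensegrity frameworks: a packing of disks with centers at lattice points, where touching disks are joined by struts, is rigid exactly when there is no nontrivial flex — no analytic path of configurations respecting all the strut (non-decrease) constraints other than rigid motions and relabelings. Since the configuration is a lattice $\Lambda_1$ sitting inside the fixed square lattice $\Lambda_0 = (ad-bc)\Z^2$, the degrees of freedom are exactly the two generating vectors $(a,b)$ and $(c,d)$, subject to keeping $\Lambda_1 \supseteq \Lambda_0$ of the same index; I would first argue that an infinitesimal (hence, by the usual argument for affine/lattice configurations, a finite) flex of the packing corresponds precisely to a deformation of these two generators, so it suffices to check when the two struts of lengths $\sqrt{a^2+b^2}$ and $\sqrt{c^2+d^2}$, together with the shortest ``diagonal'' distances $(a\pm c, b\pm d)$, pin the generators down.

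Next I would establish the three conditions in turn. Condition (\ref{equal-length}): if $a^2+b^2 \neq c^2+d^2$, say the first is strictly larger, then one can shrink the longer generator slightly while lengthening the shorter one — more precisely, rotate and rescale to decrease the long edge and increase the short edge — producing a motion that increases no strut length, so the packing flexes; hence equal lengths are necessary. Condition (\ref{relatively-prime}): if $\gcd(a,b) = e > 1$, then the vector $(a/e, b/e)$ lies in $\Lambda_1$ and is shorter than $(a,b)$, so the disk at the origin overlaps the disk at $(a/e, b/e)$ — the ``packing'' is not even a packing, contradiction; so relative primality is forced simply by the requirement that distinct lattice points be at distance $\geq$ the common diameter. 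Condition (\ref{packing-constraint}) is likewise a genuine packing constraint: the four points $\pm(a,b)\pm(c,d)$ (equivalently the lattice vectors $(a\pm c, b\pm d)$) must be at distance at least the diameter $\sqrt{a^2+b^2}$ from the origin, which is exactly the stated strict inequality once we know the generators themselves realize the minimum. I should check that, granting (\ref{equal-length}) and (\ref{relatively-prime}), the shortest vectors of $\Lambda_1$ other than $\pm(a,b), \pm(c,d)$ are among $(a\pm c, b\pm d)$, so no further constraints are needed — this is a short lemma about lattices two of whose generating vectors are shortest and equal in length.

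For the converse — that (i)--(iii) imply rigidity — I would argue that under these hypotheses the only way to deform $(a,b)$ and $(c,d)$ without decreasing either of the two equal strut lengths is to rotate both vectors rigidly: any deformation preserving $|(a,b)| \geq r$ and $|(c,d)| \geq r$ while keeping the index (area of the fundamental domain, i.e.\ $|ad-bc|$) fixed — which is required to stay inside the same $\Lambda_0$ — forces $|(a,b)|$ and $|(c,d)|$ to stay exactly $r$ and the angle between them to stay fixed, hence the only motions are global rotations, which are trivial. The one subtlety is the interaction of the two edge constraints with the area constraint: I expect the main obstacle to be showing cleanly that no ``trade-off'' flex exists — lengthening one generator while the area is fixed could in principle shorten a diagonal $(a\pm c, b\pm d)$ below $r$, and condition (\ref{packing-constraint}) with its \emph{strict} inequality is exactly what rules out that a small such flex stays a packing. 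Making this quantitative (an $\varepsilon$-neighborhood argument, using strictness to get room) is the crux; the rest is bookkeeping about the lattice $\Lambda_1$ and its short vectors.
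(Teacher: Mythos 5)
First, note that the paper does not actually prove Proposition \ref{prop:rigid}; it defers entirely to \cite{Connelly-Dickinson,Connelly-PackingI,Connelly-PackingII}, so your proposal cannot be matched against an in-paper argument. Judged on its own terms it has several genuine gaps. The most serious is the opening reduction: you assert that a flex of the packing ``corresponds precisely to a deformation of the two generators.'' A packing of $N=|ad-bc|$ disks in the fixed square torus has $2N$ degrees of freedom; a flex has no reason to preserve any lattice structure, and showing that the strut constraints nevertheless force each closed geodesic of contacts to stay straight (using that its homology class, hence its total displacement vector, is constant along the flex) is the entire content of the cited proofs, not a preliminary step. Relatedly, your converse argument imposes ``area of the fundamental domain fixed'' as a constraint, which is meaningless once the configuration is no longer a lattice; and even for lattice deformations, fixed area together with $|z_1|,|z_2|\ge r$ does not pin the generators down --- one can lengthen $z_1$ and close the angle. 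You correctly sense that the diagonals $(a\pm c,\, b\pm d)$ must then enter, but you have the role of the strict inequality in (iii.) backwards: strictness means the diagonal disks are \emph{not} in contact, so a small flex that shortens a diagonal still satisfies the packing condition; the slack helps the flex rather than forbidding it.

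Second, your argument for condition (ii.) is simply false: if $\gcd(a,b)=e>1$, the vector $(a/e,b/e)$ does \emph{not} lie in $\Lambda_1$ (for example $\Lambda_1=\langle (2,0),(0,1)\rangle$ does not contain $(1,0)$; in general writing $(a/e,b/e)=m(a,b)+n(c,d)$ and taking determinants with $(a,b)$ forces $n=0$ and then $m=1/e$). So no overlap occurs and the configuration is a perfectly good packing. The actual role of (ii.) is combinatorial: since $e\mid a$ and $e\mid b$ imply $e\mid ad-bc$, the order of $(a,b)$ in $\Lambda_1/\Lambda_0$ is $(ad-bc)/e$, so $\gcd(a,b)=1$ is exactly the condition for the struts in direction $(a,b)$ to form a \emph{single} closed geodesic through all $N$ centers (the ``grid-like'' property used in Section \ref{section:Grid-like}); when $e>1$ one gets $e$ parallel closed geodesics that can slide past one another, and that shear is the flex that destroys rigidity. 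Finally, your proposed motion for the necessity of (i.) --- shrinking the longer generator --- decreases a strut length and so is not an admissible motion of the framework as stated. In short, the skeleton of the proposal (struts, short vectors of $\Lambda_1$, the three conditions read as a length condition, a number-theoretic condition, and a packing condition) is reasonable, but each condition is justified by an argument that does not work, and the central rigidity mechanism is assumed rather than proved.
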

See \cite{Connelly-Dickinson,Connelly-PackingI,Connelly-PackingII} for a proof. Condition (\ref{packing-constraint}.) is the packing constraint, where the radius is taken to be $\sqrt{a^2+b^2}$.  So this means that, in this case, if one of the conditions of Proposition \ref{prop:rigid} do not hold, then the density of the packing can be strictly increased.

\subsection{Continued Fractions} \label{section:Continued-Fractions}
We see from Subsection \ref{subsection:Rigid-Packings} that we need to find ``good" rational approximations $b/a$ to $1/\sqrt{3}$ and  $2-\sqrt{3}$ for the two types of configurations that we considered. One gets such good approximations using their continued fraction decomposition as follows:  
\[
\frac{1}{\sqrt{3}}= \cfrac{1}{1+\cfrac{1}{1+\cfrac{1}{2+\cfrac{1}{1+\cfrac{1}{2+\cdots}}}}}=[0;1,1,2,1,2,\dots],
\]
where the sequence continues periodically as $1,2$.  Similarly 
\[
2-\sqrt{3}= \cfrac{1}{3+\cfrac{1}{1+\cfrac{1}{2+\cfrac{1}{1+\cfrac{1}{2+\cdots}}}}}=[0;3,1,2,1,2,\dots],
\]
The continued fraction approximations are obtained by truncating the expression above to get a rational approximation to the irrational numbers, $\frac{1}{\sqrt{3}}$ and $2-\sqrt{3}$ in our case, and furthermore the approximations are alternatingly less than and greater than the given irrational.  These convergents are 
\[
\frac{1}{\sqrt{3}}=0.577350269\dots:  0,\, 1,\, \frac{1}{2},\, \frac{3}{5},\,\frac{4}{7},\,\frac{11}{19},\,\frac{15}{26},\,\frac{41}{71},\,\frac{56}{97},\, \frac{153}{265},\,\frac{209}{362},\,\dots.
\]
These fractions then correspond to:
\[
0,\, 2, \,4, \,30, \,56, \,418, \,780,\, 5822, \,10864,\,80190,\,151316, \,\dots,
\]
where the odd cases \dots, $2, 30, 418, 5822, 80190 \dots$ correspond to numbers of Type I rigid packings disks.

Similarly for the convergents for 
\[
2-\sqrt{3}=0.267949192 \dots:  0, \frac{1}{3},\, \frac{1}{4},\, \frac{3}{11},\, \frac{4}{15},\,\frac{11}{41},\,\frac{15}{56},\,\frac{41}{153},\,\frac{56}{209},\, \dots,
\]
these fractions correspond to 
\[
0,\, 8, \,15, \,112, \,209, \,1560, \,2191, \,21728, \,40545, \dots,
\]
where the even cases \dots, 15, 209, 2191, 40545 \dots correspond to numbers of Type II rigid packings.

According to \cite{LeVeque} the \emph{Markov constant} associated with an irrational number $\alpha$ is the quantity $M(\alpha)$ which is the upper limit of those numbers $\lambda$ such that the inequality 
\[
\left| \alpha - \frac{b}{a} \right| < \frac{1}{\lambda a^2}
\]
has infinitely many solutions.
Also according to \cite{LeVeque}:
\begin{theorem}\label{thm:Markov}If $\frac{b}{a}$ is a convergent for $\alpha$ an irrational number, then 
\[
\left|\alpha - \frac{b}{a}\right| < \frac{1}{a^2}.
\]
Furthermore, if 
\[
\left|\alpha - \frac{b}{a}\right| < \frac{1}{2a^2},
\]
then $ \frac{b}{a}$ is a convergent for $\alpha$.
\end{theorem}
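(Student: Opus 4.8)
The plan is to derive both halves from the standard machinery of simple continued fractions. I would first record the facts I need (these can be cited, or proved by an immediate induction): writing $p_k/q_k=[a_0;a_1,\dots,a_k]$ for the convergents of $\alpha=[a_0;a_1,a_2,\dots]$, one has the recurrences $p_k=a_kp_{k-1}+p_{k-2}$, $q_k=a_kq_{k-1}+q_{k-2}$, the determinant identity $p_{k-1}q_k-p_kq_{k-1}=(-1)^k$, and the representation $\alpha=(\alpha_{k+1}p_k+p_{k-1})/(\alpha_{k+1}q_k+q_{k-1})$, where $\alpha_{k+1}=[a_{k+1};a_{k+2},\dots]>1$ is the $(k+1)$st complete quotient.

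For the first inequality, if $b/a=p_k/q_k$ is a convergent (so $a=q_k$), I would substitute the last formula into $\alpha-p_k/q_k$ and clear denominators using the determinant identity, obtaining
\[
\alpha-\frac{p_k}{q_k}=\frac{(-1)^k}{q_k(\alpha_{k+1}q_k+q_{k-1})}.
\]
Since $\alpha_{k+1}>1$ and $q_{k-1}\ge 0$, the denominator exceeds $q_k^{2}$, so $|\alpha-b/a|<1/a^{2}$; the case $k=0$ (where $q_{-1}=0$) is included, since then the estimate reads $|\alpha-a_0|=1/\alpha_1<1$.

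For the converse (Legendre's criterion), I would argue as follows. Assuming $|\alpha-b/a|<1/(2a^{2})$ with $\gcd(a,b)=1$, set $\theta=a^{2}(\alpha-b/a)$, so $0<|\theta|<1/2$ and $\theta\neq0$ because $\alpha$ is irrational. Expand $b/a=[a_0;a_1,\dots,a_n]$ as a finite continued fraction, using the identity $[a_0;\dots,a_n]=[a_0;\dots,a_n-1,1]$ to lengthen or shorten the expansion by one term so that $(-1)^n=\operatorname{sgn}(\theta)$; let $p_k/q_k$ be its convergents, so $p_n=b$ and $q_n=a$. Then I would introduce the real number $\omega$ defined by $\alpha=(\omega p_n+p_{n-1})/(\omega q_n+q_{n-1})$. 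The same computation as above gives $\alpha-p_n/q_n=(-1)^n/(q_n(\omega q_n+q_{n-1}))$, and equating this with $\theta/q_n^{2}$ yields
\[
\omega q_n+q_{n-1}=\frac{(-1)^n q_n}{\theta}=\frac{q_n}{|\theta|}>2q_n .
\]
Since $0\le q_{n-1}\le q_n$, this forces $\omega q_n>q_n$, i.e. $\omega>1$. As $\alpha$ is irrational, so is $\omega$ (it is an integral M\"obius image of $\alpha$), hence $\omega=[a_{n+1};a_{n+2},\dots]$ with $a_{n+1}=\lfloor\omega\rfloor\ge1$; substituting this tail back exhibits $\alpha=[a_0;a_1,\dots,a_n,a_{n+1},a_{n+2},\dots]$, so $b/a=p_n/q_n$ is the $n$th convergent of $\alpha$.

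The step I expect to require the most care is the parity bookkeeping in the converse: one must exploit the freedom to change the length of the continued fraction expansion of $b/a$ by one term so that $(-1)^n$ agrees with the sign of $\alpha-b/a$, and then check that the degenerate cases $q_{n-1}=q_n$ (which can occur, for instance when $a=1$) and $q_{n-1}=0$ (the case $n=0$) still give $\omega>1$. Everything else is routine manipulation of the convergent recurrences.
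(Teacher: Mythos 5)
Your argument is correct: the first inequality follows exactly as you say from $\alpha-p_k/q_k=(-1)^k/\bigl(q_k(\alpha_{k+1}q_k+q_{k-1})\bigr)$ with $\alpha_{k+1}>1$, and the converse is the standard proof of Legendre's criterion, with the parity adjustment and the bound $q_{n-1}\le q_n$ handled properly. The paper does not prove this theorem at all --- it simply cites LeVeque --- and your write-up is precisely the classical argument found in that reference (and in Hardy--Wright), so there is nothing to compare beyond noting that you have supplied the proof the paper omits.
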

For the cases we are interested in, for the convergents $b_k/a_k$ for $\frac{1}{\sqrt{3}}$ and $2-\sqrt{3}$, we can compute the Markov constant exactly for both the limit of the upper sequences and the limit of the lower sequences since the continued fraction coefficients are eventually periodic of period two.  

We first consider the convergents for the $\frac{\sqrt{3}+1}{2}=[1, 2, 1, 2, \dots]$ since it is simply periodic.  The convergents are $1, \frac{3}{2}, \frac{4}{3}, \frac{11}{8}, \frac{15}{11}, \frac{41}{30} \dots$, and if $\frac{p_k}{q_k}$ is the $k$-th convergent, then $\frac{p_{k+2}}{q_{k+2}}=\frac{3p_{k}+q_k}{2p_{k}+q_k}$.  In terms of matrices, 
\[
M \begin{bmatrix}
    p_k        \\
    q_k       \\
\end{bmatrix} =
 \begin{bmatrix}
    3       & 1\\
    2       & 1 
\end{bmatrix} 
\begin{bmatrix}
    p_k        \\
    q_k       
\end{bmatrix} =
\begin{bmatrix}
    p_{k+2}        \\
    q_{k +2}      \\
\end{bmatrix}, 
\]
where  $\frac{p_1}{q_1}=\frac{1}{1}$, and $\frac{p_2}{q_2}=\frac{3}{2}$,  the odd convergents are less that the limit, while the even terms are greater than the limit $\frac{\sqrt{3}+1}{2}$.  

The eigenvalues of the matrix $M$ are $\lambda_1=2+\sqrt{3}$ and  $\lambda_2=2-\sqrt{3}$, and there is a two-by-two matrix $X$ such that for $k= 1, 2, \dots$
\begin{eqnarray}
M^k=
\begin{bmatrix}
    p_{2k}       & p_{2k-1} \\
    q_{2k}       &   q_{2k-1} \\
\end{bmatrix}&=&
X^{-1}
\begin{bmatrix}
    \lambda_1^k       & 0 \\
    0       &   \lambda_2^k \\
\end{bmatrix} 
X  \nonumber \\
&=&\label{eqn:bigsub}
\begin{bmatrix}
    \frac{1}{6} \sqrt{3}(\sqrt{3}+1)     & \frac{1}{6}\sqrt{3} \\
    \frac{1}{3} \sqrt{3}       &  \frac{1}{2} - \frac{1}{6}\sqrt{3}\\
\end{bmatrix}\lambda_1^k +
\begin{bmatrix}
    \frac{1}{6} \sqrt{3}(\sqrt{3}-1)       &  -\frac{1}{6}\sqrt{3} \\
    -\frac{1}{3} \sqrt{3}       &   \frac{1}{2} + \frac{1}{6}\sqrt{3} \\
\end{bmatrix} \lambda_2^k .
\end{eqnarray}
Using (\ref{eqn:bigsub}) one can calculate limiting functions of $p_k$ and $q_k$ using that $\lambda_1^k \rightarrow \infty$, $\lambda_2^k \rightarrow 0$, and $\lambda_1\lambda_2 =1$ as $k\rightarrow \infty$.

Since 
\[
\frac{1}{\sqrt{3}} = \frac{1}{2\left(\frac{\sqrt{3}+1}{2}\right)-1},
\]
the sequence 
\[
\frac{b_k}{a_k}= \frac{1}{2\left(\frac{p_k}{q_k}\right)-1}=\frac{q_k}{2p_k-q_k}
\]
converges to $\frac{1}{\sqrt{3}}$, while $b_k=q_k$ and $a_k=2p_k-q_k$ are the convergent numerator and denominator for $\frac{1}{\sqrt{3}}$ for $k$ odd, and $b_k=q_k/2$ and $a_k=(2p_k-q_k)/2$ are the convergent numerator and denominator for $\frac{1}{\sqrt{3}}$ for $k$ even.

Similarly, since
\[
2-{\sqrt{3}} = -2\left(\frac{\sqrt{3}+1}{2}\right)+3,
\] 
the sequence 
\[
\frac{b_{k-1}}{a_{k-1}}=-2\left(\frac{p_k}{q_k}\right)+3=\frac{-2p_k+3q_k}{q_k}
\]
converges to $2-{\sqrt{3}}$, for $k=1, 2, \dots$, where $b_{k-1}=-2p_k+3q_k$ and $a_{k-1}=q_k$ are the convergent numerator and denominator for $2-{\sqrt{3}}$ for $k$ odd, and $b_{k-1}=(-2p_k+3q_k)/2$ and $a_{k-1}=q_k/2$ are the convergent numerator and denominator for $2-{\sqrt{3}}$ for $k$ even.

\subsection{Markov Constants} \label{subsection:Markov}
We will extend the definition of a Markov constant to apply to planar packings instead of rational approximations to irrational numbers.  The maximal density of a planar packings of equal circles is $\frac{\pi}{2\sqrt{3}}$.  If we have a packing of $N$ equal circles in a (square) torus with density $\delta = \frac{\pi}{2}\bar{\delta}$, we define the \emph{Markov constant} of that packing to be 
\[
M=M(N,\delta)=\frac{1}{\left(\frac{1}{\sqrt{3}}-\bar{\delta}\right)N}.
\]
The reason for the constant $\frac{2}{\pi}$ is so we can compare the density calculations with the number theory terms in Theorem \ref{thm:Markov}.  See the same definition from the introduction (\ref{eqn:Markov1}).  Table \ref{table:1} shows the limit for Type I packings (on the left) Type II packings (on the right) for both the rigid and non-rigid cases in terms of the convergents to $\frac{1}{\sqrt{3}}=\tan 30^{\circ}$ and $2-\sqrt{3}=\tan 15^{\circ}$, respectively.

\begin{table}[h!]
  \centering
  \label{tab:table1}
  \begin{tabular}{|c||c|c||c|c|}
  \hline
  	&   & Type I & &  Type II\\
     \hline
    $M^{-1}$ 
    & $(\frac{1}{\sqrt{3}}-\frac{b_k}{a_k})a_k^2$ 
    & $\left[\frac{1}{\sqrt{3}}-\frac{1}{4}\left(\frac{a_k}{b_k}+\frac{b_k}{a_k}\right)\right](2a_kb_k)$ 
    & $(2-{\sqrt{3}}-\frac{b_k}{a_k})a_k^2$ 
    &$\left[\frac{1}{\sqrt{3}}-\frac{a_k-b_k}{a_k+b_k}\right]\left(a_k^2-b_k^2\right)$\\
    \hline
    \hline
   odd $k$ & $-\sqrt{3}$ & 3 &$-\sqrt{3}$&$3/2$ (not rigid)\\
    \hline
  even $k$ &$2\sqrt{3}$ & 3 (not rigid) &$2\sqrt{3}$ & $6$\\
    \hline
    \hline
$M^{-1}$ & &$(\frac{1}{\sqrt{3}}-\frac{b_k}{a_k})2a_kb_k$  & & $ \left[\frac{1}{\sqrt{3}}-\frac{1}{2}\left(\frac{a_k^2+b_k^2}{a_k^2-b_k^2}\right)\right](a_k^2-b_k^2)$\\
\hline
 \end{tabular}
   \caption{Markov constants.}\label{table:1}
\end{table}
Figure \ref{fig:Dense-plot} shows some of the generating points on a square grid for various packings corresponding to some of the packings in Table \ref{table:1}.  
\begin{figure}[h]
    \begin{center}
        \includegraphics[width=1.0\textwidth]{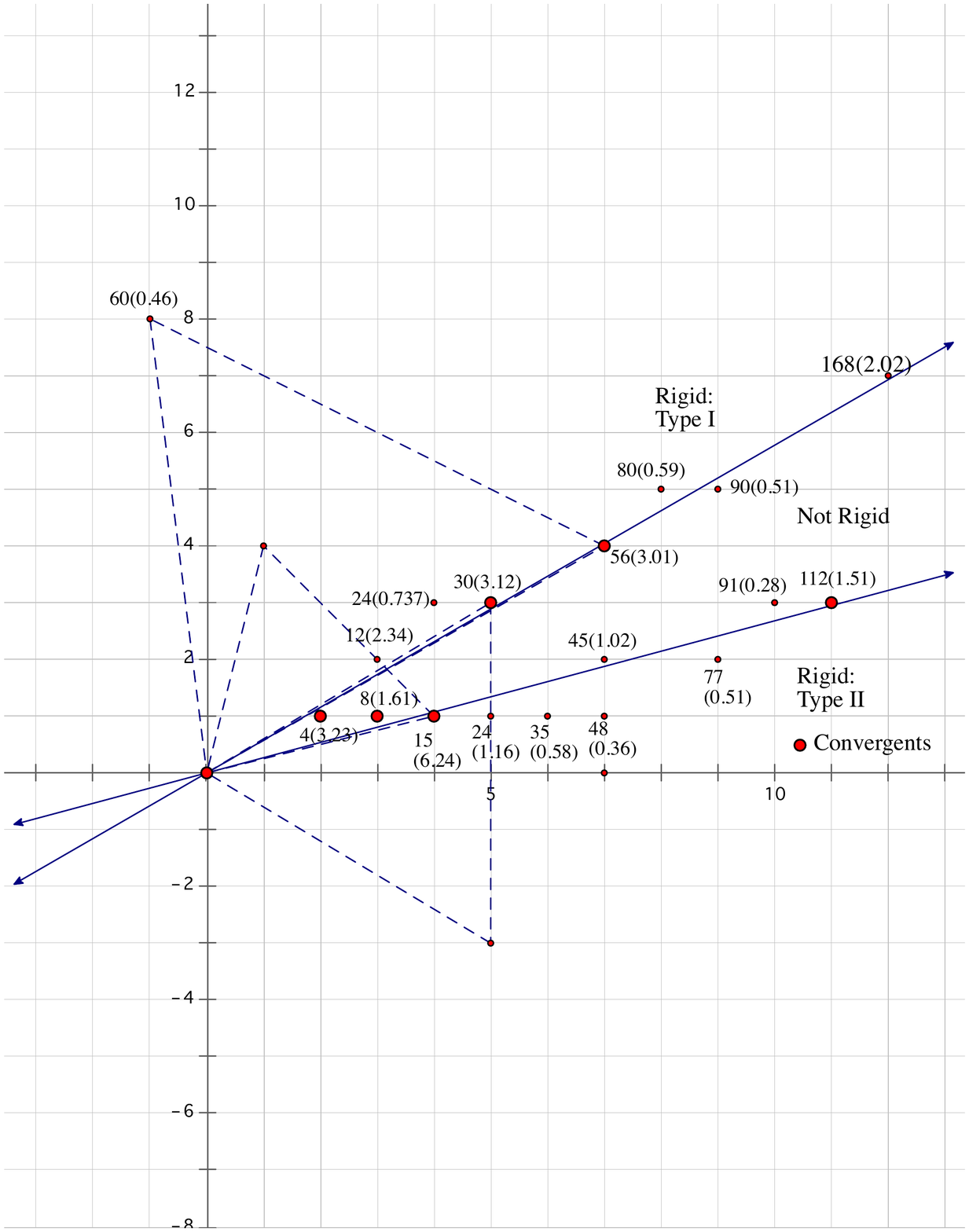}%
        \end{center}
    \caption{The number of the packing disks is indicated along with the corresponding Markov constants in parenthesis.  The upper left dashed triangle corresponds to a rigid packing that is neither of Type I nor Type II, where two lengths of its edges are the same since $8^2 +1^2=7^2+4^2=65$.  Note that there are two rigid non-convergent grid-like packings of $24$ disks of different densities and Markov constants, one of Type I and the other of Type II.}
    \label{fig:Dense-plot}
    \end{figure}
\clearpage

\section{Other Grid-like Packings} \label{section:Grid-like}
We say that a packing is \emph{grid-like} if the graph of the packing consists of two geodesics, each of which contains all the packing centers.  Type I and Type II packings are both grid-like, and we will relate all of the others as ``lifts" of Type I and Type II packings, which we will explain.  We will identify all the elements in the unit square lattice as the standard Gaussian integers $\Z[i]$, and we will use the fact that $\Z[i]$ is a principle ideal domain with unique factorization, as explained in \cite{LeVeque}.  Indeed, the primes in $\Z[i]$ are the real integers $p \equiv 3, \mod{4}$; $a \pm bi$, where $a^2+b^2$ is a real prime $p \equiv 1, \mod{4}$;  and $1 \pm i$, as well as their associates obtained by multiplying them by a unit $\pm i$ and $-1$.  Following standard notation, we define the \emph{norm} of $a + bi \in \Z[i]$ as $\N(a+bi)=a^2+b^2=(a + bi)\overline{(a + bi)}=(a + bi)(a - bi)$.

If $z_1= a + bi$ and $z_2= c + di$ are generators for a grid-like packing, then the conditions in  Proposition \ref{prop:rigid} imply $\N(z_1)=\N(z_2)$, and both $z_1$ and $z_2$ have only $\pm 1$ as a common real factor.  

\begin{proposition}\label{prop:type-grid} If $z_1= a + bi$ and $z_2= c + di$ are generators for a grid-like rigid packing satisfying the conditions of Proposition \ref{prop:rigid}, then they form:
\begin{enumerate}[(1)]
\item \label{Type-I} a Type I packing if and only if $z_1$ and $z_2$ are relatively prime in $\Z[i]$, except possibly with the common factor $1+i$, and $z_1 z_2$ is real; 
\item \label{Type-II} a Type II packing if and only if $z_1$ and $z_2$ are relatively prime in $\Z[i]$ and $z_1 z_2$ is pure imaginary.
\end{enumerate}
\end{proposition}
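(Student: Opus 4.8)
The plan is to translate the geometric symmetry conditions defining Type~I and Type~II packings into divisibility statements in $\Z[i]$ about the two generators, using that these generators are (up to sign and relabeling) the two shortest vectors of the lattice, i.e. the two geodesic directions of the grid-like packing. First I would set up the dictionary: reflection in the $x$-axis is $z\mapsto\bar z$ and reflection in the $y$-axis is $z\mapsto-\bar z$, so invariance of the lattice under both (the definition of Type~I) permutes its two shortest vectors and hence forces $z_2=\pm\overline{z_1}$; replacing $z_2$ by $-z_2$ does not change the lattice, so we may take $z_2=-\overline{z_1}$, i.e. the generators are $(a,b),(-a,b)$. Likewise reflection in the line $x=y$ is $z\mapsto i\bar z$ and in $x=-y$ is $z\mapsto -i\bar z$, so a Type~II packing has $z_2=\pm i\overline{z_1}$, and we may take $z_2=i\overline{z_1}$, i.e. the generators are $(a,b),(b,a)$.

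With these normal forms the ``only if'' directions are short computations: for Type~I, $z_1z_2=-z_1\overline{z_1}=-\N(z_1)$ is real, and for Type~II, $z_1z_2=iz_1\overline{z_1}=i\,\N(z_1)$ is purely imaginary. For the gcd statement, Proposition~\ref{prop:rigid}(\ref{relatively-prime}.) gives $\gcd(a,b)=1$ in $\Z$, so $z_1=a+bi$ is divisible by no rational prime $\equiv 3\pmod 4$ and by at most one of the two conjugate primes lying above any rational prime $\equiv 1\pmod 4$; hence $1+i$ is the only Gaussian prime that can divide both $z_1$ and $\overline{z_1}$, and it divides $z_1$ at most once, since $(1+i)^2\sim 2$ while $4\nmid a^2+b^2$ when $a,b$ are coprime. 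As $z_2$ is a unit times $\overline{z_1}$ in either case, $\gcd(z_1,z_2)$ is a unit or an associate of $1+i$; for Type~I this is exactly the assertion, and for Type~II one must argue in addition that the configurations in question have $a,b$ not both odd, so that $1+i\nmid z_1$ and the gcd is a unit.

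For the ``if'' directions I would use Proposition~\ref{prop:rigid}(\ref{equal-length}.), which gives $\N(z_1)=\N(z_2)$, hence $|z_1|=|z_2|$. Writing $z_1z_2=|z_1|^2e^{i(\arg z_1+\arg z_2)}$, the hypothesis ``$z_1z_2$ real'' forces $\arg z_1+\arg z_2\in\{0,\pi\}$ and therefore $z_2=\pm\overline{z_1}$; then $\langle z_1,z_2\rangle=\langle z_1,\overline{z_1}\rangle$ is the lattice $\Lambda_1(a,b)$, which (together with $\Lambda_0(N)$) is symmetric about both coordinate axes, so the packing is Type~I. Similarly ``$z_1z_2$ purely imaginary'' forces $\arg z_1+\arg z_2\in\{\pm\pi/2\}$, so $z_2=\pm i\overline{z_1}$ and $\langle z_1,z_2\rangle=\Lambda_2(a,b)$ is symmetric about $x=\pm y$, giving a Type~II packing; the coprimality hypothesis is not needed here, it appears in the statement only because it holds automatically.

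The step I expect to be the main obstacle is the bookkeeping around $1+i$ and the asymmetry it creates between the two cases. One needs to know that for a grid-like rigid packing the generators really are the two shortest lattice vectors, so that the normal forms $z_2=\pm\overline{z_1}$ and $z_2=\pm i\overline{z_1}$ are forced rather than merely available; and one needs to pin down exactly when $1+i$ can be a common factor. Concretely, a configuration of ``Type~II shape'' $(a,b),(b,a)$ with $a,b$ both odd is divisible by $1+i$ in both coordinates and is in fact the $(1+i)$-lift of a Type~I packing (for instance $(5,1),(1,5)$ is the lift of $(3,2),(-3,2)$), so it belongs to the ``other grid-like packings'' framework rather than being counted as a primitive Type~II packing; isolating this case and justifying that it is the only obstruction to $\gcd(z_1,z_2)=1$ is where the real work lies. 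Once that is settled, everything else follows from the dictionary between plane reflections and the maps $z\mapsto\pm\bar z$, $z\mapsto\pm i\bar z$, together with unique factorization in $\Z[i]$.
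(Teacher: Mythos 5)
Your ``if'' directions and all of Part (1) are sound, and your converse argument is genuinely different from (and cleaner than) the paper's: the paper deduces $z_2=\pm\overline{z_1}$, resp.\ $\pm i\overline{z_1}$, by matching conjugate prime factors via unique factorization and then using the reality hypothesis to pin down the remaining unit, whereas you get the same normal form in one line from $|z_1|=|z_2|$ together with $\arg z_1+\arg z_2\in\{0,\pi\}$, resp.\ $\{\pm\pi/2\}$, with no factorization at all. Your appeal to condition (\ref{packing-constraint}.) of Proposition \ref{prop:rigid} to see that $z_1,z_2$ are the two shortest independent lattice vectors (so that the reflection symmetries must carry $z_1$ to $\pm z_2$) is legitimate and fills in a step the paper leaves implicit, and your Part (1) bookkeeping with the ramified prime $1+i$ matches the paper's argument.

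The gap is exactly where you predicted it, and it cannot be closed the way you propose. For Part (\ref{Type-II}) you need $\gcd(z_1,z_2)$ to be a unit, and you plan to ``argue in addition that $a,b$ are not both odd.'' That is false: $(a,b)=(5,1)$ satisfies all three conditions of Proposition \ref{prop:rigid} and $1/5<2-\sqrt{3}$, so it is a rigid Type II packing of $N=24$ disks --- the very one the paper exhibits in Figure \ref{fig:Dense-plot} --- and there $z_1=5+i=(1+i)(3-2i)$ and $z_2=1+5i=(1+i)(3+2i)$ share the factor $1+i$. Your instinct to reclassify such configurations as $(1+i)$-lifts of Type I packings amounts to changing the definition of ``Type II,'' which the paper states purely geometrically (generators $(a,b),(b,a)$ with the lattice symmetric about $x=\pm y$); under that definition $(5,1),(1,5)$ is Type II and Part (\ref{Type-II}) as written fails for it. The honest repair is that Part (\ref{Type-II}) needs the same caveat as Part (\ref{Type-I}): relatively prime \emph{except possibly for the common factor $1+i$}. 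Exactly as in your Part (1) argument, $\gcd(a,b)=1$ forces any common Gaussian prime of $z_1$ and $i\overline{z_1}$ to be an associate of $1+i$ occurring to the first power, and this occurs precisely when $a\equiv b\pmod 2$, i.e.\ when both are odd. For what it is worth, the paper's own proof disposes of Part (\ref{Type-II}) with ``a similar argument,'' which produces the identical $1+i$ exception it then omits from the statement; so you have located a defect in the proposition rather than merely a hole in your own argument.
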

\begin{proof}
For Part (\ref{Type-I}), if $z_1= a + bi$ and $z_2= a - bi$ are generators for a Type I packing, $z_2= a - bi=\overline{a+ bi}=\bar{z_1}$, and so in $\Z[i]$, all the prime divisors of $z_2$ are conjugates of prime divisors of $z_1$ and conversely.  If $p \in  \Z[i]$ is a common prime divisor of $z_1$ and $z_2$, then $\bar{p}$ is also a common divisor and except for $1+i$, the real prime $p\bar{p}$ would also be a common divisor contradicting Condition (\ref{relatively-prime}) of Proposition \ref{prop:rigid}.  It is clear that $z_1 z_2=z_1 \bar{z_1}=\N(z_1)$ is real.  

Conversely, if $z_1$ and $z_2$ have no common prime factors in $\Z[i]$, except $1+i$, since $z_1 \bar{z_1}=\N(z_1)=\N(z_2)=z_2 \bar{z_2}$, each prime divisor $p$ of $z_1$  has its conjugate $\bar{p}$ as a divisor of $z_2$, and thus $z_1 z_2$ is a unit times $\N(z_1)=\N(z_2)$ which if the unit is real has to be $\pm1$.  Replacing say $z_2$ by $-z_2$ does not change the packing, so $z_1$ and $z_2$ generate a Type I packing.

For Type II packings $z_1= a + bi$ and $z_2= b + ai$ and $z_1 z_2=(b^2-a^2)i$, and a similar argument shows Part (\ref{Type-II}). $\square$
\end{proof}

For example, $z_1 = (4+i)(1+i)= 3+5i $, $z_2= (1 +4i)(1+i) = -3+5i$ generate a  Type I packing although its factors are not relatively prime. 

There are several examples of rigid grid-like packings other than Type I and Type II packings.  One of the simplest is for a square grid. 

 \begin{proposition}\label{prop:square-grid} Every rigid square grid in the square torus is generated by $z_1$ and $z_2=iz_1$, where $z_1$ is the product of powers of irreducible non-real primes in $\Z[i]$ such that if a prime appears, its conjugate does not.   
 \end{proposition}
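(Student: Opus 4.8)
The plan is to recognize a square grid in the square torus as a nonzero principal ideal of $\Z[i]$, and then to unpack the three conditions of Proposition~\ref{prop:rigid}. First I would record the elementary fact that a square lattice in $\mathbb{C}$ is exactly a set of the form $w\,\Z[i]$ with $w\in\mathbb{C}^{*}$ (take $w$ to be either generator), and that $w\,\Z[i]\subseteq\Z[i]$ if and only if $w\in\Z[i]$, since $1\in\Z[i]$. Such a lattice is generated, as an abelian group, by $z_1:=w$ and $z_2:=iw=iz_1$, and $w$ is determined only up to a unit. So every square grid in the square torus is generated by a pair $z_1,\ z_2=iz_1$ with $z_1=a+bi\in\Z[i]\setminus\{0\}$, and the task is to decide which $z_1$ yield a \emph{rigid} packing.

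Next I would apply Proposition~\ref{prop:rigid} to $(a,b)$ together with $(c,d)=(-b,a)$, the coordinates of $z_2=iz_1$. The equal-length condition~(\ref{equal-length}.) is automatic because $\N(iz_1)=\N(z_1)$. The packing constraint~(\ref{packing-constraint}.) is automatic too: $z_1\pm z_2=(1\pm i)z_1$, so $\N(z_1\pm z_2)=\N(1\pm i)\,\N(z_1)=2\,\N(z_1)>\N(z_1)$. Finally, since $(c,d)=(-b,a)$ satisfies $\gcd(c,d)=\gcd(a,b)$, the relative-primality condition~(\ref{relatively-prime}.) collapses to the single requirement $\gcd(a,b)=1$. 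Hence the square grid generated by $z_1$ and $iz_1$ is rigid if and only if $\gcd(a,b)=1$.

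It then remains to match $\gcd(a,b)=1$ with the stated factorization property. The key observation is that $\gcd(a,b)=1$ exactly when no rational prime divides $a+bi$ in $\Z[i]$, since a rational prime $p$ divides $a+bi$ iff $p\mid a$ and $p\mid b$. Feeding this through unique factorization in $\Z[i]$ and the list of Gaussian primes recalled earlier gives: $z_1$ has no rational prime factor $p\equiv 3\pmod 4$ (i.e.\ every prime factor of $z_1$ is non-real), $2=-i(1+i)^2$ does not divide $z_1$, and for no rational prime $q\equiv 1\pmod 4$ with $q=\pi\bar\pi$ do both $\pi$ and $\bar\pi$ divide $z_1$ --- which is precisely the assertion that $z_1$ is a unit times a product of powers of non-real primes, no two of them conjugate. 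I expect the only place that needs care is the prime $1+i$: because $\overline{1+i}$ is an associate of $1+i$, the phrase ``its conjugate does not appear'' must be interpreted there as ``$1+i$ appears to at most the first power'', and one verifies directly that $(1+i)^2\mid z_1$ is exactly the obstruction $2\mid z_1$; with that reading the equivalence is exact, and everything else is routine bookkeeping in $\Z[i]$.
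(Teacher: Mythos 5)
Your proof is correct and follows essentially the same route as the paper's: reduce rigidity of the grid generated by $z_1$ and $z_2=iz_1$ to condition (\ref{relatively-prime}.) of Proposition~\ref{prop:rigid}, translate $\gcd(a,b)=1$ into the statement that no rational prime divides $z_1$ in $\Z[i]$, and handle $1+i$ separately because its conjugate is an associate. You are in fact somewhat more complete than the paper, which does not explicitly verify that conditions (\ref{equal-length}.) and (\ref{packing-constraint}.) hold automatically for $z_2=iz_1$.
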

 \begin{proof} Note that the prime $1+i$ can appear only to the first order, since its conjugate $1-i=(1+i)(-i)$ is an associate.   Since the grid is a square grid, we must have $z_2=\pm iz_1$, and  without loss of generality we can assume $z_2=iz_1$.  If a prime appears with its conjugate, their product has a positive real integer, greater than one, in the product and this violates Condition (ii.) of Proposition \ref{prop:rigid}.  Otherwise, by unique factorization for $\Z[i]$, the only time a Gausian prime divides its conjugate is when the prime is $\pm 1\pm i$.  Thus Condition (ii.) of Proposition \ref{prop:rigid} holds and the configuration is rigid. $\square$
 \end{proof}
 
Note that a square grid is known to be the most dense packing for $N=1, 2, 5$, and we conjecture it is for $N=10$.  The corresponding sequence of packing numbers is $1, 2,5, 10, 13, 17, 25, 26, 29, \dots$, which is sequence $A008784$ in N. J. A. Sloane's On-Line Encyclopedia of Integer Sequences.  The density for each of these packings is $\pi/4$, and so the Markov constant is $1/N(\frac{1}{\sqrt{3}}-\frac{1}{2})\approx 12.9/N$.  For larger $N$ it seems unlikely that the most dense packing will be a square grid.

 \begin{proposition}\label{prop:lifting}  Let $z_1$ and $z_2$ generate a Type I or Type II rigid packing, and let $w \in \Z[i]$ be such that it has a common (prime) factor with neither $z_1$ nor $z_2$, and has no real prime factor.  Then $z_1 w$ and $z_2 w$ correspond to another rigid grid-like packing coming from an index $\N(w)$ covering of the $z_1, z_2$ packing.  Furthermore, every rigid grid-like packing is obtained by this process corresponding to a Type I, Type II, or square grid packing.
\end{proposition}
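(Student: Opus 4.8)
The plan is to prove the two assertions separately, using Proposition~\ref{prop:rigid}, Proposition~\ref{prop:type-grid}, and unique factorization in $\Z[i]$ throughout. Write $N=|\det(z_1,z_2)|$ for the number of disks of a packing with generators $z_1,z_2$, so that its torus is $\R^2/N\Z[i]$.

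For the first assertion, I would check the three conditions of Proposition~\ref{prop:rigid} for the pair $z_1w,\,z_2w$. Condition~(i) is immediate from $\N(z_1w)=\N(z_1)\N(w)=\N(z_2)\N(w)=\N(z_2w)$, and the packing constraint~(iii) follows from that of the base since $\N(z_1w\pm z_2w)=\N(w)\,\N(z_1\pm z_2)>\N(w)\,\N(z_1)=\N(z_1w)$. The only real work is condition~(ii), that no rational prime divides $z_1w$ in $\Z[i]$ (and likewise $z_2w$), which I would argue prime by prime. A rational prime $p\equiv 3\pmod 4$ remains prime in $\Z[i]$, so $p\mid z_1w$ would force $p\mid z_1$ or $p\mid w$, contradicting coprimality of the base's coordinates or $w$ having no real prime factor. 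For $p\equiv 1\pmod 4$, writing $p=\pi\overline\pi$, the only surviving case, up to swapping $\pi$ and $\overline\pi$, is $\pi\mid z_1$ together with $\overline\pi\mid w$; but for a Type I or Type II base $z_2$ is a unit times $\overline{z_1}$, so $\overline\pi\mid z_2$, making $\overline\pi$ a common prime factor of $w$ and $z_2$, which is excluded. The prime $1+i$ is treated the same way, using that $2$ divides neither $z_1$ nor $w$, so $1+i$ divides each at most once. This gives rigidity. For the grid-like property and the covering: $|\det(z_1w,z_2w)|=N\N(w)$ and $\langle z_1w,z_2w\rangle\supseteq wN\Z[i]\supseteq N\N(w)\,\Z[i]$, so the lifted packing has $N\N(w)$ disks on $\R^2/N\N(w)\Z[i]$; condition~(ii) forces the order of $z_1w$ in $\Z[i]/N\N(w)\Z[i]$ to equal $N\N(w)$, so the single geodesic in the direction $z_1w$ (and likewise $z_2w$) meets every center; and $N\N(w)\Z[i]\subseteq wN\Z[i]$ with index $\N(w)$ exhibits $\R^2/N\N(w)\Z[i]$ as an $\N(w)$-fold cover of $\R^2/wN\Z[i]$, which carries the $w$-scaled (hence, up to scale, isometric) copy of the base packing.

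For the converse, let $z_1,z_2$ generate a rigid grid-like packing. Condition~(ii) of Proposition~\ref{prop:rigid} says no rational prime divides $z_1$ or $z_2$, so for each $p\equiv 1\pmod 4$ at most one of the two Gaussian primes over $p$ divides $z_1$ (and similarly $z_2$), and $1+i$ divides each at most once; condition~(i) makes the factorizations of $z_1,z_2$ involve the same rational primes to the same total powers. If $z_2/z_1$ is a unit then, the lattice being two-dimensional, $z_2=\pm iz_1$ and the packing is a square grid, an allowed base by Proposition~\ref{prop:square-grid}. Otherwise, let $w$ be the product of $1+i$ (if it divides $z_1$) together with, for each rational prime $p$ for which $z_1$ and $z_2$ are divisible by the \emph{same} one of the two Gaussian primes over $p$, that prime to its full power; put $z_1''=z_1/w$ and $z_2''=z_2/w$. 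Then $w$ divides $z_1$, so it has no real prime factor, and by construction it shares no prime with $z_1''$ or $z_2''$, while $z_1''$ and $z_2''$ are coprime in $\Z[i]$, satisfy $z_2''=(\text{unit})\,\overline{z_1''}$, and have $\N(z_1'')>2$. Hence $z_1''z_2''=(\text{unit})\,\N(z_1'')$ is real or pure imaginary, so by Proposition~\ref{prop:type-grid} the pair $(z_1'',z_2'')$ is a Type I or Type II packing, and it is rigid since each condition of Proposition~\ref{prop:rigid} for it follows by dividing the corresponding statement for $z_1,z_2$ through by $\N(w)$. By the first part, $(z_1,z_2)=(z_1''w,\,z_2''w)$ is exactly the lift of $(z_1'',z_2'')$ by $w$.

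The main obstacle is the bookkeeping in the converse: one must choose $w$ so that, simultaneously, $w$ shares no prime with $z_1''$ or $z_2''$, the quotient $z_2''/\overline{z_1''}$ is a unit (this is what lets Proposition~\ref{prop:type-grid} apply), and the exceptional prime $1+i$ and the residual global unit are disposed of correctly — these last being exactly what distinguishes Type I, Type II, and the square-grid case (including Type I's tolerance of a shared factor $1+i$). A related subtlety is that the first assertion is claimed only for Type I and Type II bases: a lift of a square grid need not be rigid — lifting the square grid generated by $2+i$ and $i(2+i)$ by $w=2-i$ produces $5$ and $5i$, whose coordinates are not coprime — so in the converse the square-grid case must be recognized and left as is, not lifted further.
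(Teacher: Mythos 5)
Your proof is correct and follows essentially the same route as the paper's: multiplicativity of the norm plus unique factorization in $\Z[i]$ to verify the conditions of Proposition~\ref{prop:rigid} for $z_1w,\, z_2w$, and, for the converse, extracting the common factors as $w$ so that the quotient pair is a Type I, Type II, or square-grid base. You supply considerably more detail than the paper does (the prime-by-prime check of condition~(ii), the geodesic and covering bookkeeping, and the observation that square grids need not lift rigidly), but the underlying argument is the same.
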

\begin{proof} Given $z_1, z_2$ as above, by Proposition \ref{prop:type-grid}, they have no common factors in $\Z[i]$, and $\N(z_1)=\N(z_2)$.  So $\N(z_1w)=\N(z_1)\N(w)= \N(z_2)\N(w)=\N(z_2w)$, and it is easy to see that if there were a common real prime factor in the real and the imaginary parts of $z_1w$ or $z_1w$ it would appear in their $\Z[i]$ factorings.  So  $z_1w$ and $z_1w$ are rigid packing generators with $\N(w)$ times as many packing disks with exactly the same density, since $w$ just rotates $z_1, z_2$ rigidly and then rescales each line by  $\sqrt{\N(w)}$.  

Conversely, given any $z_1, z_2$ that generates a grid-like packing, we can take the common divisors as the Type I, Type II or the square grid packing generators, as in Proposition \ref{prop:square-grid} and treat the common factors as $w$.  $\square$
\end{proof}

Notice that the Markov constant decreases by a factor of $\N(w)$ in the lifting of Proposition \ref{prop:lifting}, and the Type II packing that corresponds to convergents for $2-\sqrt{3}$ converge themselves to $6$ from above.  This leads to the following:
\begin{corollary}\label{cor:II-bound} If $ a +bi, b+ai$, where $a \ge b \ge 1$, generates a Type II packing with Markov constant $M_P>3.59$, then $b/a$ is an even convergent  to $2-\sqrt{3}$ and $6.25 > M_P > 6$.  Therefore any grid-like Type II packing has Markov constant $M_P < 6.25$.
\end{corollary}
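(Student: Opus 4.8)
The plan is to reduce the statement to the arithmetic of the continued fraction $2-\sqrt3=[0;3,1,2,1,2,\dots]$ and the two halves of Theorem \ref{thm:Markov}. The first step is to record the exact Markov constant of a rigid Type II packing. For such a packing the packing constraint of Proposition \ref{prop:rigid} forces $b/a<\tan 15^\circ=2-\sqrt3$, so $\epsilon:=(2-\sqrt3)-b/a$ is positive; substituting $\bar\delta=\tfrac12\cdot\tfrac{a^2+b^2}{a^2-b^2}$ and $N=a^2-b^2$ and using the cancellation $(7-4\sqrt3)\bigl(\tfrac1{\sqrt3}+\tfrac12\bigr)=\tfrac1{\sqrt3}-\tfrac12$ yields the clean identity
\[
M_P^{-1}=\Bigl(\tfrac1{\sqrt3}-\bar\delta\Bigr)N=\frac{a^2\,\epsilon\,\bigl(2-(2+\sqrt3)\epsilon\bigr)}{2\sqrt3}.
\]
Because $b\ge1$ forces $b/a>0$, hence $(2+\sqrt3)\epsilon<(2+\sqrt3)(2-\sqrt3)=1$, the factor $2-(2+\sqrt3)\epsilon$ always lies strictly between $1$ and $2$; this is exactly what couples $M_P$ to the approximation quality of $b/a\approx 2-\sqrt3$. (If instead $b/a>2-\sqrt3$ we are in the non-rigid case, where the analogous computation with $\bar\delta=\tfrac{a-b}{a+b}$ keeps $M_P$ below a small absolute constant, well under $3.59$, so that case is vacuous under the hypothesis.)

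Second, I would use the identity to force $b/a$ to be a convergent. Assume $M_P>3.59$, i.e.\ $a^2\epsilon\bigl(2-(2+\sqrt3)\epsilon\bigr)<\tfrac{2\sqrt3}{3.59}<0.965$. If $4\le a\le 7$, then $b<a(2-\sqrt3)<2$ gives $b=1$, and the four cases are settled by hand: $a=4$ already has $b/a=\tfrac14=p_2/q_2$, while $a=5,6,7$ give $M_P<1.2$, contradicting the hypothesis. If $a\ge 8$, then $2-(2+\sqrt3)\epsilon>1$ gives $a^2\epsilon<0.965$, hence $\epsilon<0.965/64<0.016$, hence $2-(2+\sqrt3)\epsilon>1.94$, and feeding this back in gives $a^2\epsilon<0.965/1.94<\tfrac12$. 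Thus $\bigl|(2-\sqrt3)-\tfrac ba\bigr|<\tfrac1{2a^2}$, so by the second half of Theorem \ref{thm:Markov} the (already reduced) fraction $b/a$ is a convergent of $2-\sqrt3$; since $b/a<2-\sqrt3$ and $b\ge1$ it must be one of the convergents strictly below $2-\sqrt3$, i.e.\ an even-indexed convergent $p_{2k}/q_{2k}$ with $k\ge1$ (the list $\tfrac14,\tfrac4{15},\tfrac{15}{56},\dots$).

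Third, for the sandwich $6<M_P<6.25$ I would plug the even convergents back into the identity. Writing $\epsilon_k=(2-\sqrt3)-p_{2k}/q_{2k}$ and using the standard error formula $\epsilon_k=\bigl(q_{2k}(\alpha_{2k+1}q_{2k}+q_{2k-1})\bigr)^{-1}$, the period-two tail of the continued fraction gives the complete quotient $\alpha_{2k+1}=\sqrt3+1$ for every $k\ge1$, whence $q_{2k}^{\,2}\epsilon_k=\bigl((\sqrt3+1)+x_k\bigr)^{-1}$ with $x_k:=q_{2k-1}/q_{2k}$. The denominator recursions collapse to $x_k=\tfrac{2+x_{k-1}}{3+x_{k-1}}$ with $x_1=\tfrac34$, a contraction with fixed point $\sqrt3-1$ along which $x_k$ decreases monotonically from $\tfrac34$ to $\sqrt3-1$. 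Then $x_k>\sqrt3-1$ gives $q_{2k}^{\,2}\epsilon_k<\tfrac1{2\sqrt3}$, which together with $2-(2+\sqrt3)\epsilon_k<2$ gives $M_P^{-1}<\tfrac16$, i.e.\ $M_P>6$; and since $q_{2k}^{\,2}\epsilon_k$ and $2-(2+\sqrt3)\epsilon_k$ are both increasing in $k$ (as $x_k$ and $\epsilon_k$ decrease), $M_P^{-1}$ is smallest at $k=1$, where $b/a=\tfrac14$, $\epsilon_1=\tfrac{(2-\sqrt3)^2}{4}$, and a direct evaluation gives $M_P^{-1}=5\sqrt3-\tfrac{17}{2}$, so $M_P=\bigl(5\sqrt3-\tfrac{17}{2}\bigr)^{-1}<6.25$ for all $k\ge1$. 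The last assertion then follows from Proposition \ref{prop:lifting}: a grid-like Type II packing is a lift $z_1w,z_2w$ of a Type II packing $z_1=a+bi$, $z_2=b+ai$, and lifting multiplies $N$ by $\N(w)\ge1$ without changing the density, so $M_P^{\mathrm{lift}}=M_P/\N(w)\le M_P<6.25$, the last inequality holding for the base packing either trivially (when $M_P\le3.59$) or by what was just proved.

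The delicate part should be the constant-chasing in the third step: verifying $x_k=q_{2k-1}/q_{2k}>\sqrt3-1$ (equivalently $q_{2k}^{\,2}\epsilon_k<\tfrac1{2\sqrt3}$) uniformly in $k$, which is precisely where the eventual periodicity of the continued fraction of $2-\sqrt3$ is used, and checking that the extreme case $k=1$ really satisfies $M_P<6.25$ — the inequality $5\sqrt3-\tfrac{17}{2}>\tfrac1{6.25}$, i.e.\ $10\sqrt3>17.32$, is tight. A secondary nuisance is the reduction for $a\ge8$: the crude bound $a^2\epsilon<0.965$ is not yet $<\tfrac12$, and one must exploit $2-(2+\sqrt3)\epsilon>1.94$ — available only because $M_P>3.59$ already pins $\epsilon$ down to be small — to clear the threshold demanded by Theorem \ref{thm:Markov}.
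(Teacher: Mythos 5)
Your proof is correct and follows essentially the same route as the paper's: both convert $M_P$ into the number-theoretic quantity $a^2\epsilon$ (your identity $M_P^{-1}=\frac{a^2\epsilon(2-(2+\sqrt3)\epsilon)}{2\sqrt3}$ is exactly the paper's relation $M_{\#}=\frac{1}{2\sqrt{3}}\left(\frac{b}{a}(2+\sqrt{3})+1\right)M_{P}$ rewritten), clear the threshold of Theorem \ref{thm:Markov} to force $b/a$ to be an even convergent, and then obtain $6<M_P<6.25$ from the behavior of those convergents. The only difference is one of rigor rather than strategy: your bootstrap for $a\ge 8$ replaces the paper's loosely stated monotonicity reduction to $\frac{b}{a}\ge\frac14$, and your periodic-tail computation (via $x_k=q_{2k-1}/q_{2k}\downarrow\sqrt3-1$) actually proves the claim, cited by the paper only from Table \ref{table:1}, that the even-convergent Markov constants decrease from $\bigl(5\sqrt3-\tfrac{17}{2}\bigr)^{-1}\approx 6.24$ to $6$.
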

\begin{proof} From Table \ref{table:1} define the number theory Markov constant $M_{\#}$ for a Type II packing generated by $ (a, b)$ by
\[
M_{\#}^{-1}=\left(\sqrt{3}-2-\frac{b}{a}\right)a^2, 
\]
and the packing Markov constant $M_{P}$ by
\[
M_{P}^{-1}= \left[\frac{1}{\sqrt{3}}-\frac{1}{2}\left(\frac{a^2+b^2}{a^2-b^2}\right)\right](a^2-b^2). 
\]
Then it turns out that 
\[
M_{\#}=\frac{1}{2\sqrt{3}}\left(\frac{b}{a}\left(2+\sqrt{3}\right)+1\right)M_{P}.
\]
The smallest value for $a$ in a Type II packing is $a=4$ when $b=1$.  From the definition of $M_P$, for fixed $a$, it decreases as $b$ increases, and it decreases for a fixed ratio $\frac{b}{a}$ as $a$ and $b$ increase, subject to $\frac{b}{a} < \sqrt{3} -2$.  Indeed, if $a=6$, and $b= \frac{6}{4}$, then $M_{P}=2.77\dots < 3.59$. (The point $(5,1)$ has $M_{P}=1.16 < 3.59$ as well.  So we can assume that $\frac{b}{a} \ge \frac{1}{4}$, and then 
\[
M_{\#} \ge \frac{1}{2\sqrt{3}}\left(\frac{1}{4}\left(2+\sqrt{3}\right)+1\right)(3.59)
 = 2.00323> 2.
\]
Thus by Theorem \ref{thm:Markov}, $\frac{b}{a}$ is a convergent to $2-\sqrt{3}$, and from the calculations in Table \ref{table:1}, since the even convergents converge to $6$ from above, starting from a bit less than $6.25$, the conclusion follows. $\Box$
\end{proof}

There is a strong connection between Type I and Type II packings, especially those that correspond to convergents.  For example, the complex factors corresponding to Type II rigid convergent packings are $4+i, 15+4i, 56+15i, 209+56i, \dots$, and $\overline{(4+i)}(1+i)=5+3i, \overline{(15+4i)}(1+i)=19+11i, \overline{(56+15i)}(1+i)=71+41i, \overline{(209+56i)}(1+i)=265+153i, \dots$ are the corresponding complex numbers in $\Z[i]$ corresponding to Type I  packings.  Notice that these Type I packings all have the prime factor 1+i in both of the generators, so the generators are rescaled each by $\sqrt{2}$, while the area of the corresponding torus and the number of packing disks is multiplied by $2$.    So the Markov constant is halved, since the density of these corresponding packing is the same.  The first example of this is for the Type II packing in Figure \ref{fig:15'}, having $15$ disks,  which corresponds to the Type I packing in Figure \ref{fig:30'}, having $30$ disks.

\begin{remark}\label{remark:Type-II-Markov} We can do an analysis for Type I packings similar to what was done in Corollary \ref{cor:II-bound} for Type II packings.  Namely, all the Markov constants $M_P$ for Type I packings are less than $3.54$.  We can assume that $\frac{b}{a} \le \frac{3}{5}$ which is the first Type I convergent.  Then for the Markov constant for Type I packings,
\[
M_{\#} = \frac{1}{2}\left(\sqrt{3}-\frac{b}{a}\right)M_P\ge \frac{1}{2}\left(\sqrt{3}-\frac{3}{5}\right)(3.54)=2.004 >2.
\]
So $\frac{b}{a}$ must be an odd covergent to $\frac{1}{\sqrt{3}}$ and $3< M_P < 3.12$.
\end{remark}

In general, if $a+bi = z_1, b+ai=z_2$, corresponds to a Type II packing, then $z_2=i\bar{z_1}$, and $z_1(1-i)=(a+bi)(1-i)=a+b+(b-a)i=w_1$ and $z_2(1-i)=(b+ai)(1-i)=a+b+(a-b)i=w_2$, is a Type I packing, since $\bar{z_1}=b_2$, with exactly the same density.  Similarly if $z_1, z_2=\bar{z_1}$ are generators of a Type I packing then  $z_1(1+i)=w_1, z_2(1+i)=w_2$ will be generators of a Type II packing with the same density and twice the number of packing elements, unless $1+i$ is a common divisor of $z_1, z_2=\bar{z_1}$, in which case after a common factor of 2 is removed $w_1, w_2$ correspond to a Type II packing. (Note that if $1+i$ divides $z_1$, and it divides $z_2=\bar{z_1}$, then $(1+i)^2=2i$ or $1-i$ divides $z_2$, and $(1+i)(1-i)=2$. In both cases the factor of $2$ appears.)

With all this information, we have a complete description of rigid grid-like packings together with their Markov constants.

\begin{theorem} If $z_1$ and $z_2$ are the generators of a rigid grid-like packing with more than two disks, then its Markov constant is $M_P < 6.25$ and $M_P > 3.59$ implies the packing is a Type II packing with $6.25 > M_P > 6$.
\end{theorem}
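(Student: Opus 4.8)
The plan is to read the theorem as the synthesis of Proposition \ref{prop:lifting}, Corollary \ref{cor:II-bound}, and Remark \ref{remark:Type-II-Markov}, organized around one application of the lifting decomposition. First I would invoke Proposition \ref{prop:lifting}: the generators of the given packing can be written $z_1 = z_1'w$, $z_2 = z_2'w$, where $z_1', z_2'$ generate a Type I, Type II, or square-grid packing and $w \in \Z[i]$ has no real prime factor and no prime factor in common with $z_1'$ or $z_2'$. Multiplication by $w$ leaves the density unchanged, so the Markov constants are related by $M_P = M_0/\N(w)$, with $M_0$ the Markov constant of the base packing. A non-unit of $\Z[i]$ with no real prime factor has norm at least $\N(1+i)=2$, so either $\N(w)=1$, in which case the given packing is itself a base packing, or $\N(w)\ge 2$, in which case $M_P \le M_0/2$.

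Next I would bound $M_0$ in each of the three base cases, using throughout that the given packing has $N>2$ disks (so that $N = N_0\,\N(w) > 2$). If the base is a square grid, then $z_2'=iz_1'$ forces $z_2=iz_1$, so the given packing is itself a square grid; by Proposition \ref{prop:square-grid} and unique factorization in $\Z[i]$ its number of disks $N=\N(z_1)$ is a product of norms of non-real primes, pairwise non-conjugate, with at most one factor $2$, hence $N\notin\{3,4\}$ and $N>2$ forces $N\ge 5$, giving $M_P = 1/\big((\tfrac{1}{\sqrt3}-\tfrac{1}{2})N\big) \le 12.93\ldots/5 < 2.59$. If the base is Type I (necessarily not the degenerate two-disk lattice, which is in any case a square grid), Remark \ref{remark:Type-II-Markov} gives $M_0 < 3.54$, and a proper lift only lowers this, so $M_P < 3.54$. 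If the base is Type II, Corollary \ref{cor:II-bound} gives $M_0 < 6.25$; a proper lift then has $M_P \le 6.25/2 = 3.125$, whereas for the un-lifted Type II packing Corollary \ref{cor:II-bound} gives the sharper statement $M_P < 6.25$ and, should $M_P > 3.59$, that $b/a$ is an even convergent to $2-\sqrt3$ with $6 < M_P < 6.25$.

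Finally I would assemble the cases. In every case $M_P < 6.25$, which is the first assertion. For the second, the hypothesis $M_P > 3.59$ rules out square grids (where $M_P < 2.59$), all Type I packings and their lifts (where $M_P < 3.54$), and all proper lifts of Type II packings (where $M_P \le 3.125$); the only surviving case is an un-lifted Type II packing, and then Corollary \ref{cor:II-bound} pins it down to $6 < M_P < 6.25$, which is exactly what is claimed.

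The only ingredient that is not an immediate citation is the square-grid count, and that is where I expect to need the most care: from Proposition \ref{prop:square-grid} one must check that a rigid square grid has neither exactly $3$ disks (impossible, since $3$ is a real prime $\equiv 3\pmod 4$ and hence not a norm of a product of non-real primes) nor exactly $4$ disks (impossible, since $\N(z_1)=4$ would force $1+i$ to appear to the second power in $z_1$, forbidden because $1-i$ is an associate of $1+i$). Granting this, the hypothesis ``more than two disks'' does precisely the work of deleting the two sporadic small square grids ($N=1$ with $M_P\approx 12.9$ and $N=2$ with $M_P\approx 6.46$), which would otherwise violate the stated bounds; every larger rigid grid-like packing satisfies them.
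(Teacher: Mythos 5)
Your proof is correct and follows essentially the same route as the paper's: factor out the common Gaussian-prime content of $z_1$ and $z_2$ to reduce to a base Type I, Type II, or square-grid packing, then apply Remark \ref{remark:Type-II-Markov} and Corollary \ref{cor:II-bound}, tracking the factor $\N(w)$ by which a lift divides the Markov constant. Your explicit handling of the square-grid base and of the degenerate $N=1,2$ packings (which is exactly where the ``more than two disks'' hypothesis is needed, since those have $M_P\approx 12.9$ and $6.46$) is in fact more careful than the paper's own proof, which leaves these cases implicit.
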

\begin{proof}  Since $\N(z_1)=z_1 \bar{z_1}=z_2\bar{z_2}= \N(z_2)$, each prime factor $p$ of $z_1$ must divide either $z_2$ or $\bar{z_2}$.  If any such $p$ divides $z_2$, we can factor it out of both sides to get another packing with fewer packing disks having the same density, and thus a larger Markov constant.  Then we end up with a packing with generators such that $z_1$ and $\bar{z_2}$ are associates.  That is each is a unit times the other.  If the unit is real, i.e. $\pm 1$, we can replace   $z_1$ by $\pm z_1$,  and replace $z_2$ by $\pm z_2$  to get other generators of the same packing such that $z_2=\bar{z_2}$ and we have a Type I packing, which we know has a Markov constant $M_P < 3.54$ from Remark \ref{remark:Type-II-Markov}. (If it corresponds to a convergent Type I packing as described above, then $3<M_P < 3.13$).

Otherwise, $z_2=\pm i\bar{z_1}$.  Again we can assume that $z_2= i\bar{z_1}$, and it corresponds to a Type II packing, and from Corollary \ref{cor:II-bound}, the conclusion follows.
$\Box$ \end{proof}

An example of this process is the packing implied by the generators in the upper left of Figure \ref{fig:Dense-plot}, where $z_1=7+4i=(1+2i)(3-2i)$ and $z_2=-1+8i=(1+2i)(3+2i)$.  So they have a common factor of $(1+2i)$, with norm $\N(1+2i)=5$.  The corresponding Type I packing is generated by $3+2i$ and $3-2i$ which has Markov constant $M_P=2.34$ with $12$ disks, while the original packing has $M_P=2.34/5=0.46$ and $5\cdot 12=60$ disks.

\section{Flexing Non-rigid Packings} \label{section:Flexing}

For those Type I and Type II packings that are not rigid, it is possible to increase the packing density, and one hopes to ``flex" it until it is at least rigid or perhaps even maximally dense.  In general, just because the packing is not rigid does not mean that the density can be increased.  For example, there is a maximally dense packing of $7$ disks (shown in Figure \ref{fig:7-1}) that has a ``rattler" that is free from the rest of the packing disks according to \cite{Musin}.  Nevertheless, for Type I and Type II non-rigid packings we can always increase the density, but it can be difficult to see how to do the motion to get rigidity for large values of $N$.  

For $8$ disks we show a sequence of packings indicating the flex as it converges to the maximally dense packing starting with Figure \ref{fig:8}. 

\begin{figure}[h]
    \begin{center}
        \includegraphics[width=0.3\textwidth]{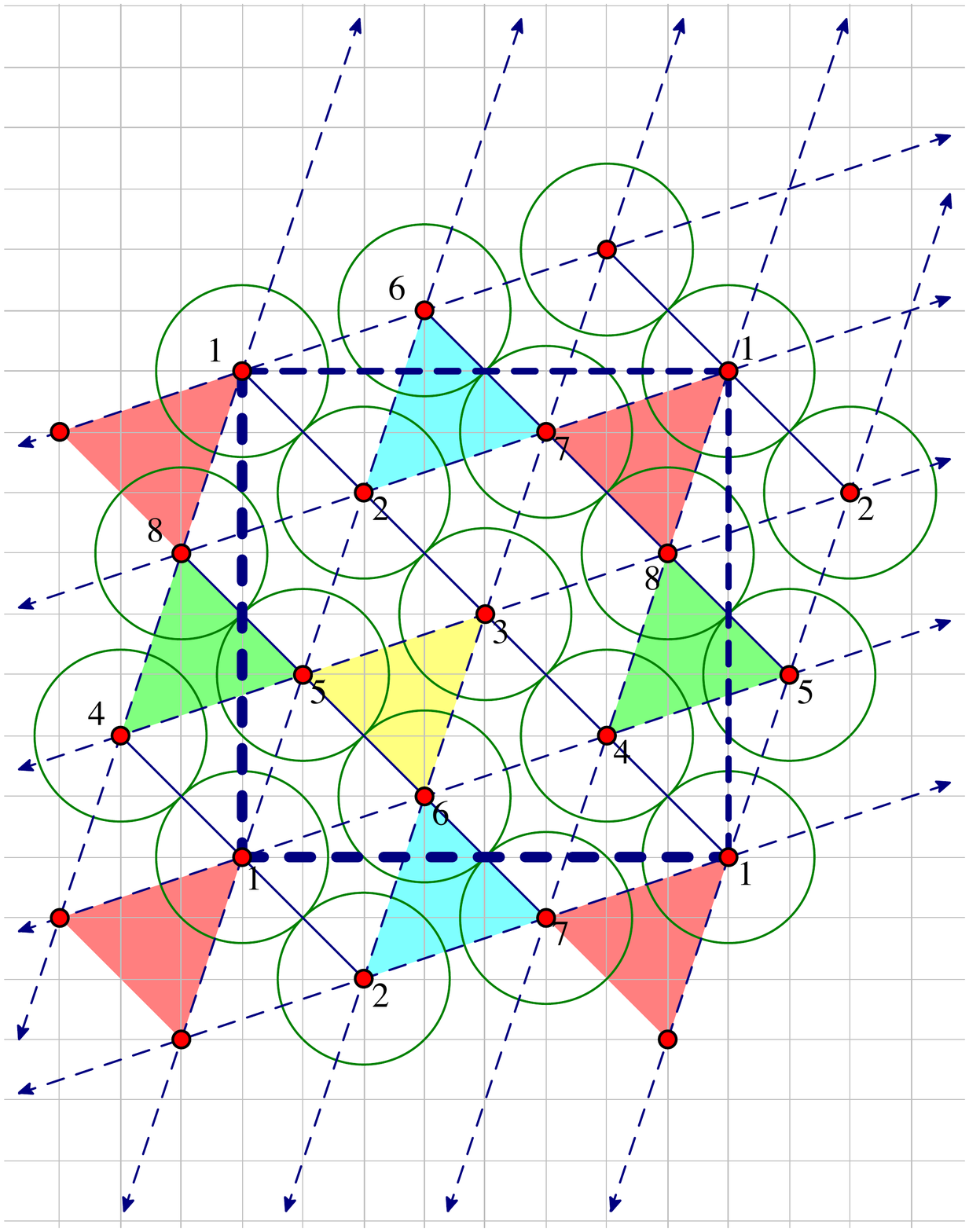}%
        \end{center}
    \caption{This is the initial configuration where the lightly dashed line segments are longer than the minimal length, as one can see from the indicated circles. Notice that the graph of the packing has two components, the centers $1,2,3,4$ and $5,6,7,8$.  The heavily dashed square is a fundamental region.}
    \label{fig:8}
    \end{figure}
\begin{figure}[h]
    \begin{center}
        \includegraphics[width=0.3\textwidth]{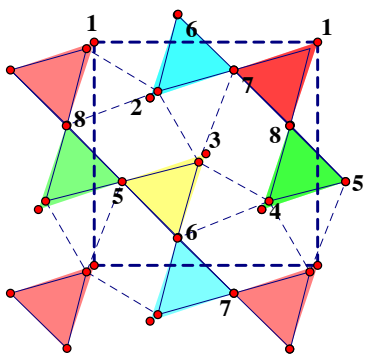}%
        \end{center}
    \caption{Next the packing centers $1,2,3,4$ are moved nearer to the other vertices of each colored triangle until all the colored triangles are equilateral as above.}\label{fig:8-1}
    \end{figure}  
\clearpage      
\begin{figure}[h]
    \centering
    \subfloat[]{{\includegraphics[width=0.3\textwidth]{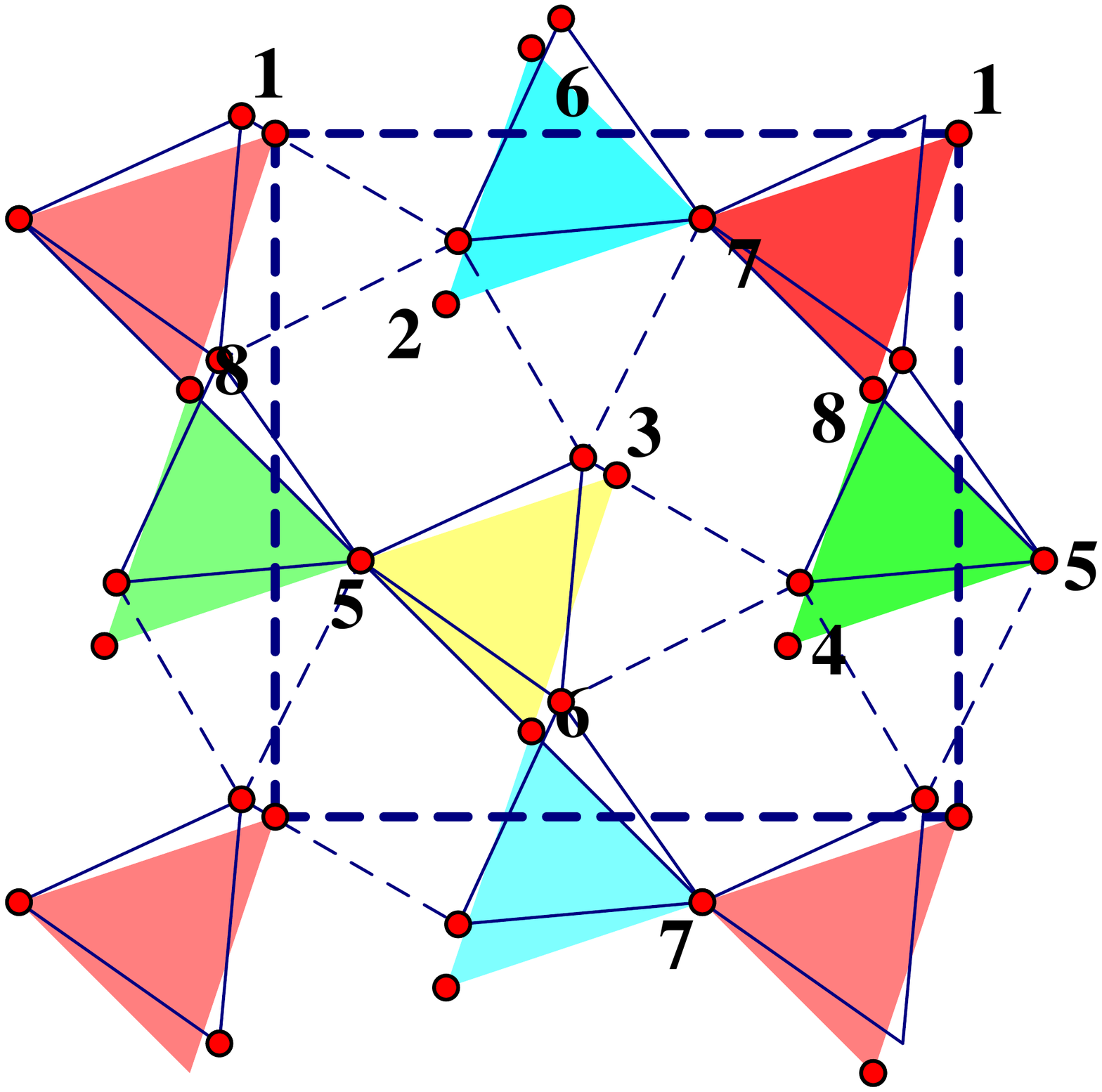} }}%
    \qquad
    \subfloat[]{{\includegraphics[width=0.3\textwidth]{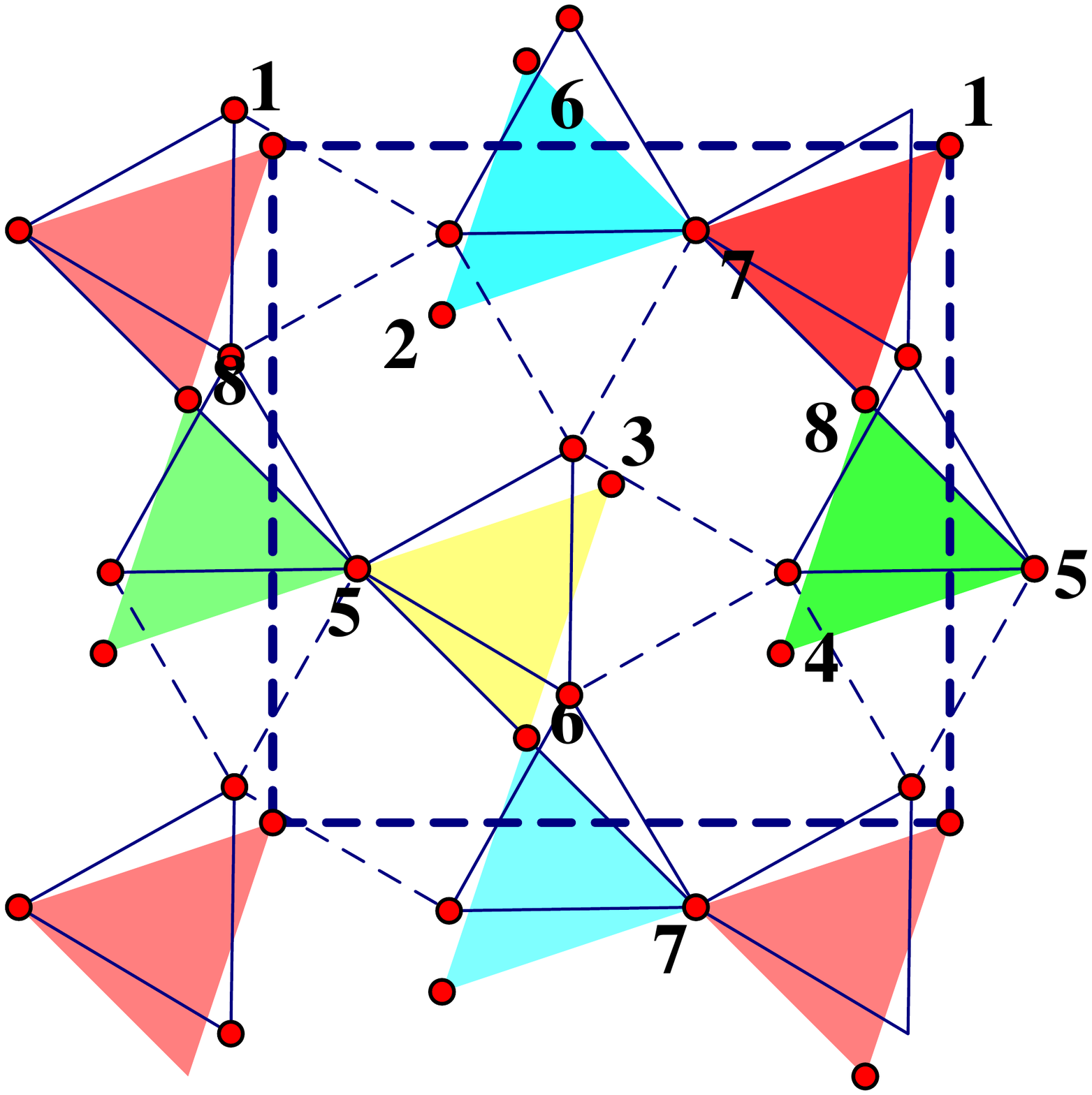} }}%
    \caption{Then fixing centers $5,7$, flex each of the formally colored triangles, keeping them equilateral until the dashed lengths are equal to the lengths of colored triangles as in Figure \ref{fig:8-3}.
This is the final motion.}%
    \label{fig:8-3}%
\end{figure}
 
According to \cite{Musin} the configuration of Figure \ref{fig:8-3}(b) corresponds to the most dense packing of $8$ circles in a square torus.  This most dense packing has Markov constant greater than $3.01$, while the Type II non-rigid packing has Markov constant less that $1.62$.  See Section \ref{section:Best}.

In \cite{Gruber-optimal}, just after his Theorem 10, while discussing `stability' results in geometry, Peter Gruber says about packings and coverings in compact domains 

``In other words: if the radius of the circles of the covering is sufficiently small and the density very close to $2\pi/3\sqrt{3}$, then the covering is `almost regular hexagonal'. A corresponding result for the packing case can be proved in a similar way."  

For the case of packings of equal circles in a square torus, this leads us to the following:

\begin{conjecture}  If a packing of $N$ equal circles in a square torus corresponds to one of the convergents of a $1/\sqrt{3}$ or $2-\sqrt{3}$ of a Type I or Type II packing, respectively, then the packing graph is a subgraph of a triangulation of the torus, where each vertex has degree six.
\end{conjecture}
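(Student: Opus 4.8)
We outline an approach that settles the conjecture for the packings that come directly from the convergents, and indicate where the remaining difficulty lies. The approach rests on two points: a precise description of the contact graph of such a packing, obtained from a short count of the short vectors of the underlying lattice; and the elementary fact that any planar lattice $\Lambda=\langle u,w\rangle$, triangulated by tiling the plane with the parallelograms spanned by $u$ and $w$ and slicing each one along a single fixed diagonal, becomes a triangulation in which every vertex has degree six and whose edges run in exactly the three directions $u$, $w$, and the chosen diagonal.

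\textbf{Step 1: the contact graph.} For a convergent $b/a$ of $1/\sqrt3$ the centres lie on the Type I lattice $\Lambda_1(a,b)=\langle(a,b),(-a,b)\rangle$; writing a general vector as $(pa,qb)$ with $p\equiv q\pmod 2$ and comparing $p^2a^2+q^2b^2$ with the squared diameter, one finds the contacts are exactly $\pm(a,b),\pm(-a,b)$ when $b/a>1/\sqrt3$ (the rigid case, diameter $\sqrt{a^2+b^2}$) and exactly $\pm(0,2b)$ when $b/a<1/\sqrt3$ (the non-rigid case, diameter $2b$, as in Subsection \ref{subsection:Type-I-Packings}). Here condition (\ref{packing-constraint}.) of Proposition \ref{prop:rigid} is precisely what excludes the would-be shorter vectors $(2a,0)=z_1-z_2$ and $(0,2b)=z_1+z_2$ in the rigid case. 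The parallel computation for the Type II lattice $\Lambda_2(a,b)=\langle(a,b),(b,a)\rangle$, using $|m(a,b)+n(b,a)|^2=(m^2+n^2)(a^2+b^2)+4mnab$, yields contacts $\pm(a,b),\pm(b,a)$ in the rigid case $b/a<2-\sqrt3$ and $\pm(a-b)(1,-1)=\pm(z_1-z_2)$ in the non-rigid case. In every family the contact directions form a subset of $\{z_1,z_2,z_1-z_2\}$ or of $\{z_1,z_2,z_1+z_2\}$, where $z_1,z_2$ are the lattice generators.

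\textbf{Step 2: the triangulation.} Take the packing lattice $\Lambda$ itself as vertex set and triangulate by slicing its $z_1,z_2$-parallelograms along whichever diagonal carries the third contact direction found in Step 1 ($z_1+z_2$ for the non-rigid Type I family, $z_1-z_2$ for the non-rigid Type II family, either diagonal for the two rigid families). By the second point above this is a degree-six triangulation of the plane whose edge directions are $z_1$, $z_2$, and that diagonal; it is $\Lambda$-periodic, and since $\Lambda_0(N)\subseteq\Lambda$ with index $N$ (one checks at once that $(N,0)$ and $(0,N)$ lie in $\Lambda$) it descends to a degree-six triangulation of the square torus on exactly $N$ vertices. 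By Step 1 the packing graph is a spanning subgraph of it, which proves the conjecture for the packings taken literally from the convergents.

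\textbf{The main obstacle.} If the conjecture is instead intended for the \emph{densest} packing at a convergent value of $N$, then for the non-rigid convergents the non-rigid grid packing must first be replaced by the rigid packing to which it flexes, as in Section \ref{section:Flexing}; Steps 1--2 then cover only the rigid convergents, and the genuine content is to understand that flex. This is exactly the difficulty flagged in Section \ref{section:Flexing}, where the motion is carried out by hand only for $N=8$. I would run the flex inside the symmetry class of the Type I or Type II packing so that it becomes a one-parameter motion --- shearing the columns of a Type I packing, or the diagonal lines of a Type II packing --- solve for the parameter at which the moving edges first match the length of the fixed ones, and then show that the resulting rigid configuration is again a lattice whose new contact directions satisfy a triangular-lattice linear relation, so that Step 2 applies verbatim. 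I expect the crux to be precisely this last claim --- that the flex terminates at a uniform shear and not at a genuinely aperiodic rigid packing --- and I would attempt it by induction along the continued-fraction recursion of Subsection \ref{section:Continued-Fractions}, invoking Gruber's near-hexagonality remark to pin down the contact combinatorics once the disks are small enough.
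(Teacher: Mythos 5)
First, note that the paper offers no proof of this statement: it is stated and left as a conjecture, supported only by the computed examples, so there is no argument of the authors' to compare yours against. Your Steps 1 and 2 are correct as far as they go: the enumeration of minimal vectors of $\Lambda_1(a,b)$ and $\Lambda_2(a,b)$ in each of the four regimes is right, the contact directions do lie in $\{\pm z_1,\pm z_2,\pm(z_1+z_2)\}$ or $\{\pm z_1,\pm z_2,\pm(z_1-z_2)\}$, the inclusion $\Lambda_0(N)\subseteq\Lambda$ checks out (e.g. $(2ab,0)=b(a,b)-b(-a,b)$), and slicing the $z_1,z_2$-parallelograms along the appropriate diagonal does give a degree-six triangulation of the torus containing the contact graph. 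But this is precisely the part the authors set aside as already understood --- they write that ``for the grid-like graphs of Type I and Type II, the triangulation is clear'' --- and it is not the content of the conjecture.

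The substantive claim, as the surrounding discussion makes explicit (the conjecture is meant to ``reduce the number of possible examples of graphs that need to be checked in the proofs involved in finding the densest packings,'' and to apply to ``the rigid subgraph of a most dense packing or even include rattlers''), concerns the rigid and/or maximally dense packings at the convergent values of $N$, in particular those reached by flexing the non-rigid grid packings as in Section \ref{section:Flexing}. Your proposal correctly isolates this as the main obstacle but does not close it: the claim that the flex terminates at a uniform shear of a lattice rather than at a genuinely aperiodic rigid configuration is asserted as an expectation, the induction along the continued-fraction recursion is not actually set up, and Gruber's remark is quoted in the paper only as an informal stability statement for coverings (``a corresponding result for the packing case can be proved in a similar way''), so it cannot be invoked ``verbatim'' to pin down contact combinatorics without first being formulated and proved as a quantitative theorem with explicit thresholds on the radius and the density defect. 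In short, you have a correct proof of the half the authors regard as clear, together with a research plan for the half that makes the statement a conjecture; the conjecture itself remains open.
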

This seems to be what is happening for the conjectured and proven examples that have been calculated.  If this is proven, this could also help reduce the number of possible examples of graphs that need to be checked in the proofs involved in finding the densest packings.  For the grid-like graphs of Type I and Type II, the triangulation is clear, but it would be interesting to find the additional triangulating edges of a graph of a rigid configuration.  This could, of course, apply to the rigid subgraph of a most dense packing or even include rattlers as with one of the best packing for $7$ disks in a square torus.

\section{Relation to Square Packings} \label{section:square-packings}

Our technique here using continued fractions is closely related to a result by Kari Nurmela, Patric \"Ostergard, and Rainer aus den Spring \cite{Nurmela} where it is applied to packing of equal circles in a square.  Their packings in a square are also such that the graph of the packing centers is along two sets of parallel lines, where the slope is given by a rational approximation to $1/\sqrt{3}$.  We show in Figure \ref{fig:30'} how their first example corresponds to our Type I packing with slope $5/3$ of $30$ disks in a square torus.
\begin{figure}[h]
    \begin{center}
        \includegraphics[width=0.3\textwidth]{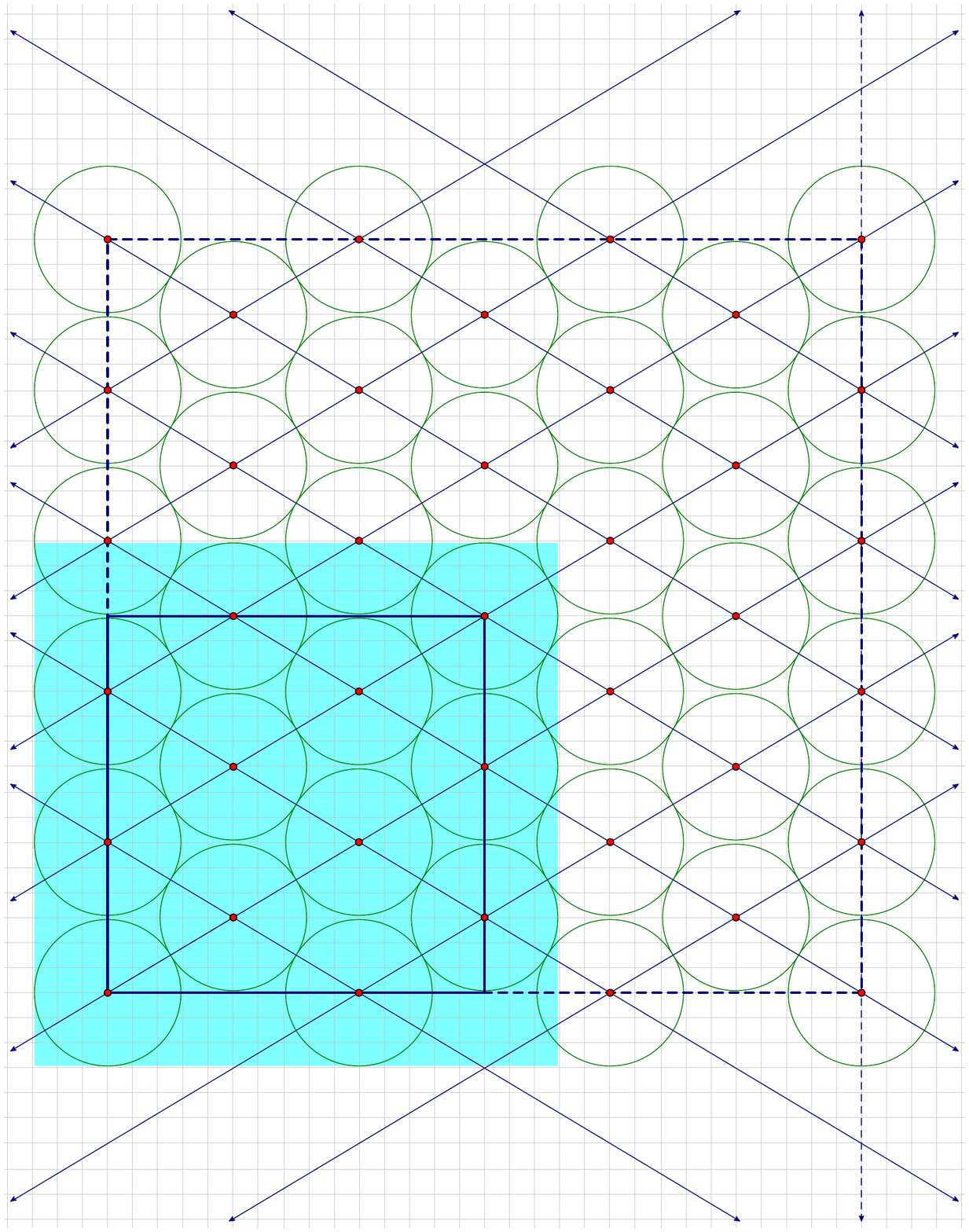}%
        \end{center}
    \caption{This shows the fundamental region of a packing of $30$ disks in a square torus, where one-fourth of that region, a smaller square half the length of one side of the fundamental region, contains $12$ centers out of the $30$.  When each side of that square is enlarged by diameter of the packing circles, we see the packing of $12$ disks in the blue square given by \cite{Nurmela}.}
    \label{fig:30'}
    \end{figure}

\section{Density bounds} \label{section:limiting-density}

\subsection{Unattainability of the limiting density}\label{subsection:no-limit}
\begin{theorem}[Thue (1892) \cite{Thue}] Any packing of equal disks in the plane has density $\delta \le \pi/\sqrt{12}$.
\end{theorem}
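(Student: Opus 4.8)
The plan is to use the Voronoi (Dirichlet) decomposition associated to the packing, following the classical argument of Fejes T\'oth. After rescaling we may assume the disks have radius $1$, so the set $P$ of centers satisfies $|p-q|\ge 2$ for all distinct $p,q\in P$; and since adding disks can only increase the density, we may further assume the packing is \emph{saturated}, i.e.\ no unit disk can be added disjointly. Saturation forces every point of the plane to lie within distance $2$ of some center, so each Voronoi cell $V_p=\{x: |x-p|\le|x-q|\ \forall\, q\in P\}$ is a bounded convex polygon (contained in $B(p,2)$, hence of diameter $\le 4$), the cells tile the plane with pairwise disjoint interiors, and each $V_p$ contains $B(p,1)$ (if $|x-p|<1$ then $|x-q|\ge|p-q|-|x-p|>1>|x-p|$ for every other center $q$).

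Two lemmas then do the work. First, a convex polygon with $n$ sides containing the unit disk has area at least $f(n):=n\tan(\pi/n)$: pushing each edge inward, parallel to itself, until it is tangent to the disk only decreases the area and does not increase the number of sides, so it suffices to treat polygons circumscribed about the disk, which decompose into $n$ triangles with apex at the center, apothem $1$, and central angles $\beta_i$ summing to $2\pi$; the $i$-th triangle has area $\tan(\beta_i/2)$, and convexity of $\tan$ on $(0,\pi/2)$ gives $\sum_i\tan(\beta_i/2)\ge n\tan(\pi/n)$. Since $f$ is decreasing, a cell with $n_p$ sides satisfies $\operatorname{area}(V_p)\ge f(n_p)$ even after the edge count drops. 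Second, for the cells meeting a large square $Q_L$ of side $L$, the cell adjacency graph is planar with all vertex degrees $\ge 3$, so Euler's formula gives $\sum_p n_p = 2(\#\text{edges}) \le 6(\#\text{cells}) + O(L)$, the $O(L)$ accounting for cells and edges near $\partial Q_L$; thus the average number of sides is at most $6+O(1/L)$.

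To combine them, one checks that $f$ is convex on $\{3,4,5,\dots\}$ (so $f$ lies above its tangent line at $6$: $f(n)\ge f(6)+f'(6)(n-6)$, with $f(6)=2\sqrt3$ and $f'(6)<0$; this is a finite check for $n=3,4,5$ and automatic for $n\ge6$ since the tangent line eventually drops below $\pi\le f(n)$). Writing $F$ for the number of cells meeting $Q_L$,
\[
\sum_{p}\operatorname{area}(V_p)\ \ge\ \sum_{p} f(n_p)\ \ge\ 2\sqrt3\,F + f'(6)\Bigl(\textstyle\sum_p n_p - 6F\Bigr)\ \ge\ 2\sqrt3\,F - O(L),
\]
since $f'(6)<0$ and $\sum_p n_p-6F\le O(L)$. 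As these cells all lie in an $O(1)$-neighbourhood of $Q_L$, the left side is at most $L^2+O(L)$, so $F\le L^2/(2\sqrt3)+O(L)$; hence the area of the packing disks inside $Q_L$ is at most $\pi F + O(L)\le \frac{\pi}{2\sqrt3}L^2+O(L)$, and dividing by $L^2$ and letting $L\to\infty$ gives density $\le \pi/(2\sqrt3)=\pi/\sqrt{12}$.

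I expect the only genuinely delicate point to be the honest bookkeeping of the boundary terms: making precise that the number of cells, edges, and vertices meeting $\partial Q_L$ is $O(L)$ (which is exactly where the saturation reduction and the resulting diameter bound are used) and that this is negligible against the $L^2$ main term; the two Jensen arguments and Euler's formula are routine. An alternative that sidesteps the global vertex count is to run the same estimate on the Delaunay triangulation dual to this tiling, bounding the area of packing disks inside each Delaunay triangle by a fixed fraction of the triangle's area — the route of Chang and Wang — whose delicate point is instead that per-triangle area comparison.
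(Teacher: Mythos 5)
Your proof is essentially correct, but it follows the classical Fejes T\'oth route via Voronoi cells, whereas the paper reproduces the Chang--Wang argument via the Delaunay triangulation (the ``alternative'' you mention in your closing sentence is in fact the paper's proof). The two decompositions trade difficulties: your approach needs the global combinatorial input --- Euler's formula to bound the average number of Voronoi sides by $6+O(1/L)$, two convexity arguments (for the circumscribed-polygon area bound $n\tan(\pi/n)$ and for the tangent-line estimate at $n=6$), and honest $O(L)$ boundary bookkeeping, which you correctly identify as the only delicate point and correctly justify via the diameter bound $\le 4$ on cells of a saturated packing. The paper's Delaunay route avoids all global counting: each Delaunay triangle carries exactly $\pi/2$ of disk area (the three sectors at its vertices), and the whole proof reduces to the purely local claim that each triangle has area at least $\sqrt3$, which follows from saturation via the circumradius/law-of-sines argument showing every angle is less than $2\pi/3$. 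A side benefit of the paper's localization is that it immediately identifies the equality case (every triangle equilateral of side $2$, hence the hexagonal lattice), which the paper needs later for Theorem \ref{thm:limit}; extracting uniqueness from your averaged Voronoi inequality is possible but requires tracking when all the Jensen inequalities are tight. Two small points in your write-up: the reduction ``pushing each edge inward until tangent'' should note that edges not meeting the inscribed disk's tangency configuration may vanish, which you do handle via the monotonicity of $f$; and your claim that the tangent-line bound is ``automatic for $n\ge 6$'' still needs the explicit checks at $n=7,8$ (or the convexity of $f$ on all of $[3,\infty)$), since the tangent line only drops below $\pi$ from $n=9$ on --- both checks do go through.
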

\begin{proof}
While this result was initially proven by Thue, we present here a proof given by Hai-Chau Chang and Lih-Chung Wang \cite{Chang-Wang}. 
\begin{definition}
A \emph{Delauney triangulation} of a set of points in the plane is one satisfying the property that no point in the set is contained in the interior of the circumcircle of any triangle.
\end{definition}

\begin{definition}
A \emph{saturated} configuration of circles of equal radius in a subset of $\R^2$ is one that is not a proper subset of another configuration. In other words, no circle can be added without overlapping some previous circle.
\end{definition}

\begin{lemma}
Delauney triangulations exist for saturated configurations.
\end{lemma}

To see this, assume that we have a set of points $\p_1, \dots, \p_n \in \R^2$ corresponding to the centers of a saturated circle configuration, with $\p_i = (x_i,y_i)$. Then lift the points $\p_1, \dots, \p_n$ to points $\q_1, \dots, \q_n \in \R^3$, with $\q_i = (x_i,y_i,x_i^2 + y_i^2)$, so that each point $\q_i$ lies on the paraboloid $z = x^2 + y^2$. Then taking the convex hull of the paraboloid gives a polyhedron with vertices at each $\q_i$. We project the convex hull down to the plane $\R^2$ to give a triangulation of the points. Since the faces of the convex hull arise from intersections with a given plane in $\R^3$, they correspond to circles in $\R^2$; in fact, these intersections correspond precisely to circumcircles of triangles. Because they originated from a complex hull, these circumcircles do not contain any other points in the configuration.

A slight hitch arises if these circles contain more than three points of the configuration; if the convex hull includes a square or an $n$-gon for $n \ge 4$, then that $n$-gon can be triangulated in any way to make a Delauney triangulation. The following lemma will show that for a saturated circle configuration, no hexagons or higher will arise, by showing that a Delauney triangulation of a saturated configuration cannot contain a triangle with any internal angle larger than $\frac{2\pi}3$.  Nevertheless, larger $n$-gons may be triangulated in any way without breaking the Delauney requirement. In a saturated configuration, \emph{any} of these triangulations must follow the angle requirement below.

\begin{lemma}
Let $\theta$ be the largest internal angle of a triangle $\bigtriangleup ABC$ in a Delauney triangulation for a saturated circle configuration of unit circles $\mathcal C$. Then $\frac\pi 3 \le \theta < \frac{2\pi}3$.
\end{lemma}
\begin{proof}
Since $\theta$ is the largest angle in a well-defined triangle, we immediately know that $\frac\pi 3 \le \theta$.

Assume by contradiction that $\theta \ge \frac{2\pi}3$. Let $\alpha$ be the smallest internal angle, at the vertex $A$ as depicted.
\begin{figure}{h}
\[\begin{tikzpicture}
\node (A) at (-0.5,0) {$A$};
\node (a) at (0.41,0.08) {$\alpha$};
\node (B) at (5.3, 3.1) {$B$};
\node (C) at (5.5,-1) {$C$};

\draw (0,0) -- (5,3) -- (5,-1) -- (0,0);
\end{tikzpicture}
\]
\caption{}
    \label{fig:triangle}
\end{figure}
Since the largest interior angle is $\ge \frac{2\pi}3$, $\alpha \le \frac{\pi}3$, so $\sin \alpha \le \frac 12$. Because we have a saturated circle configuration, $\overline{BC} \ge 2$. Let $r$ be the circumradius of $\bigtriangleup ABC$. The law of sines then tells us that
\[2r = \frac{\overline{BC}}{\sin \alpha} \ge \frac 2{\sin\alpha} \ge 4. \]

But since the circumradius is $\ge 4$, a circle can be added to the configuration at the circumcenter. Thus $\theta < \frac {2\pi}3$.
\end{proof}

Note that this is a strict inequality; the largest angle in a Delauney triangulation cannot be $\frac{2\pi}3$.

The density of a circle packing restricted to a specified triangle $\bigtriangleup ABC$ is 
\[\frac{\frac 12 A + \frac 12 B + \frac 12 C}{\text{area of }\bigtriangleup ABC} = \frac{\pi/2}{\text{area of }\bigtriangleup ABC}. \]

This allows for a computation of the density of a Delauney triangulation in general.

\begin{lemma}
The density $\delta$ of a triangle $\bigtriangleup ABC$ in a Delauney triangulation for a saturated circle configuration $\mathcal C$ satisfies $\delta \le \frac{\pi}{\sqrt{12}}$. Equality holds only with equilateral triangles of side length $2$.
\end{lemma}
\begin{proof}
Let $\beta$ be the largest internal angle, at $B$. Let $x$ be the area of $\bigtriangleup ABC$. Then 
\[x = \frac 12 \overline{AB} \overline{BC} \sin \beta \ge \frac 12 \cdot 2 \cdot 2 \cdot \frac{\sqrt{3}}2 = \sqrt{3}. \]

Thus the density of the triangle, which is given by $\frac{\pi/2}x$, is less than or equal to $\frac{\pi}{2\sqrt{3}} = \frac{\pi}{12}$.

In order for equality to hold, equality must hold for every part of the inequality $\frac 12 \overline{AB} \overline{BC} \sin \beta \ge \frac 12 \cdot 2 \cdot 2 \cdot \frac{\sqrt{3}}2$. In particular, $\beta = \frac{\pi}3$, and $\overline{AB} = \overline{BC} = 2$. Thus $\bigtriangleup ABC$ must be equilateral with side length $2$.
\end{proof}

For a finite union of Delauney triangles, we have a packing density $\delta$ with 

\[\delta = \frac{\sum_{\bigtriangleup_i} (x_i)\times (\delta_i)}{\sum_{\bigtriangleup_i}\delta_i},\]
where $x_i$ is the area of the triangle $\bigtriangleup_i$ and $\delta_i$ is the density of the triangle $\bigtriangleup_i$. As a result, the density of each Delauney triangle in the configuration is a bound on the total density, so to obtain the upper bound of $\frac{\pi}{\sqrt{12}}$, each triangle must have the limiting density. Thus the hexagonal lattice, with a Delauney triangulation consisting of the tiling of the plane with equilateral triangles, must be the unique packing of unit circles in finite subsets $\R^2$ achieving the density $\frac{\pi}{\sqrt{12}}$. We can extend this to periodic packings of $\R^2$ by noting that when restricted to any fundamental region, the packing must be the triangular lattice, so the entire packing must be the triangular lattice. In particular, this shows that $\frac{\pi}{\sqrt{12}}$ is an upper bound on the density of a periodic packing of unit circles in the plane, attained only by the hexagonal lattice packing.

\end{proof}

\begin{theorem}\label{thm:limit}
Any packing of a finite number of equal disks in a square torus has density $\delta < \pi/\sqrt{12}$.
\end{theorem}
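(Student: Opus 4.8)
The plan is to reduce the statement to a fact about planar packings and then invoke the uniqueness clause that was proved along with Thue's theorem. First I would normalize: rotating and rescaling the torus changes neither the density of a packing nor whether it is a triangular‑lattice packing, so I may assume the square torus is $\R^2/\Z^2$. Assume for contradiction that some packing of $N\ge 1$ equal disks in $\R^2/\Z^2$ has density exactly $\pi/\sqrt{12}$. Each disk of the packing is embedded in the torus (this is part of what it means to be a disk packing in a torus), so it lifts to a $\Z^2$‑orbit of pairwise disjoint round disks in $\R^2$, and together these orbits form a $\Z^2$‑periodic packing $\widetilde P$ of the plane. Since a fundamental domain of $\Z^2$ has area $1$, the density of $\widetilde P$ equals that of the torus packing, namely $\pi/\sqrt{12}$.

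Next I would apply the periodic case of Thue's theorem proved just above: a periodic packing of equal disks in the plane that attains the density $\pi/\sqrt{12}$ must be \emph{the} triangular (hexagonal) lattice packing. Hence $\widetilde P$ is the triangular lattice packing; write $\Lambda\subset\R^2$ for its set of disk centers, a triangular lattice, and note that (the balls being disjoint and of equal radius) the group of translational symmetries of $\widetilde P$ is exactly $\Lambda$.

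The heart of the argument is that the square lattice is not contained in any triangular lattice. Because $\widetilde P$ is $\Z^2$‑periodic, every $v\in\Z^2$ is a translational symmetry of $\widetilde P$, so $\Z^2\subseteq\Lambda$. Let $\rho\colon\R^2\to\R^2$ be the linear rotation by $60^\circ$. A triangular lattice packing has six‑fold rotational symmetry about each of its centers, and conjugating a translation by one of these rotations produces translation by the $\rho$‑image of the vector; hence $\rho(v)$ is a translational symmetry of $\widetilde P$ for every $v\in\Z^2$, so $\rho(\Z^2)\subseteq\Lambda$ as well. Therefore $\Lambda$ contains the additive subgroup $\Gamma:=\Z^2+\rho(\Z^2)$ of $\R^2$. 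But $\Gamma$ contains $(1,0)$ and $\rho(1,0)=(\tfrac{1}{2},\tfrac{\sqrt{3}}{2})$, hence $(0,\sqrt{3})=2\rho(1,0)-(1,0)$ and also $(0,1)$, hence every point $(0,\,m+n\sqrt{3})$ with $m,n\in\Z$; since $\sqrt{3}$ is irrational, $\{\,m+n\sqrt{3}:m,n\in\Z\,\}$ is dense in $\R$, so $\Gamma$ — and with it $\Lambda$ — is not discrete. This contradicts the fact that $\Lambda$ is a lattice (equivalently, that $\widetilde P$ has disks of positive radius and hence no arbitrarily short nonzero translational symmetry), and the theorem follows.

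I expect the only delicate point to be the appeal in the second step: one should make sure that the ``attained only by the hexagonal lattice'' conclusion of the preceding proof really does apply to an arbitrary periodic packing sitting at the extremal density, not merely to saturated ones — it does, since at density $\pi/\sqrt{12}$ the triangulation argument of that proof forces every triangle on the torus to be equilateral of side $2$, which already pins down the whole packing. Everything else — invariance of density under the lift, the conjugation‑of‑symmetries computation, and the irrationality estimate — is routine. (Alternatively, the last step can be phrased via quadratic fields: identifying $\R^2$ with $\mathbb C$, a square lattice is a complex scalar times the Gaussian integers and a triangular lattice a complex scalar times the Eisenstein integers, and these generate the distinct imaginary quadratic fields $\QQ(i)$ and $\QQ(\sqrt{-3})$, so no similarity copy of the square lattice can sit inside a triangular one.)
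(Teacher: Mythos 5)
Your proposal is correct and follows essentially the same route as the paper: invoke the uniqueness clause of Thue's theorem to force the lifted periodic packing to be a triangular lattice packing, and then show that the period lattice $\Z^2$ cannot sit inside any triangular lattice because of the irrationality of $\sqrt{3}$. The only cosmetic difference is in the final step --- the paper expands the relation $(0,1)=i\cdot(1,0)$ in the basis $\{1,\omega\}$ of the triangular lattice and reads off a contradiction from the real and imaginary parts, while you conjugate translations by the $60^{\circ}$ rotational symmetry to produce a non-discrete subgroup --- but both contradictions rest on exactly the same irrationality.
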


\begin{proof}
Let the origin in the plane represent one of the packing centers in the torus.
If it lies on the triangular lattice, it can be represented by the complex number $n_1+m_1\omega$ where $\omega = \frac{1}{2}+i\frac{\sqrt{3}}{2}$ and $m_1,m_2$ are integers. 

Since the point $(0,1)$ lies on the triangular lattice, it must be able to be represented by the complex number $n_2+m_2\omega$ (with $n_2,m_2$ integers) and must be the product of $i$ and the lower right corner. 

Thus the equality $i(n_1+m_1\omega)=n_2+m_2\omega$ must be true if the square torus is to be represented by the triangular lattice.

Expanding, we have

\begin{align*}
i(n_1+m_1\omega)&=n_2+m_2\omega \\
\Rightarrow in_1+im_1\omega&=n_2+m_2\omega\\
\Rightarrow in_1+i\frac{1}{2}m_1-\frac{\sqrt{3}}{2}m_1 &= n_2+\frac{1}{2}m_2+i\frac{\sqrt{3}}{2}im_2 \\
i(n_1+\frac{1}{2}m_1-\frac{\sqrt{3}}{2}m_2) &= n_2+\frac{1}{2}m_2+\frac{\sqrt{3}}{2}m_1\\
\end{align*}

The left-hand-side has no real component, while the right-hand-side has no imaginary component, so this equality can only be satisfied if both sides sum to $0$.

Considering the left-hand-side, the equality $n_1+\frac{1}{2}m_1-\frac{\sqrt{3}}{2}m_2 = 0$ must be true.

Because $\frac{\sqrt{3}}{2}$ is irrational, no combination of integer multiples of $1, \frac{1}{2}, \frac{\sqrt{3}}{2}$, $(a_1+a_2\frac{1}{2}+a_3\frac{\sqrt{3}}{2}$) can sum to zero unless $a_1=a_2=a_3=0$.

These values of $a_1,a_2,a_3$ are only present when $n_1=m_1=n_2=m_2=0$, i.e. when the torus being represented is a single point at the origin.

Thus no square torus with nonzero area can be represented by points on the triangular lattice.

Because the only packing of disks with optimal density $\delta = \pi/\sqrt{12}$ is the packing represented by the triangular lattice, and this configuration can not be achieved in the square torus, any packing on the square torus must have density $\delta < \pi/\sqrt{12}$.
\end{proof}

\subsection{Gruber bounds for a planar square}\label{subsection:Gruber-bounds}

We state the following result of Norman Oler \cite{Oler} mentioned in the Introduction.

\begin{theorem}[Oler (1961)] Let $X$ be a compact polygon in the plane and $N$ the maximum number of distinct points in $X$ whose pairwise mutual distances are all at least $1$.  Then 
\[
N \le \frac{2}{\sqrt{3}}A(X) + \frac{1}{2}P(X) + 1, 
\]
where $A(X)$ is the area of $X$ and $P(X)$ is the perimeter of $X$.
\end{theorem}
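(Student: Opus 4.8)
\medskip\noindent\textit{Proof sketch.} The plan is to reduce the bound to a local area estimate on the triangles of a triangulation, much as in the Chang--Wang proof of Thue's theorem above. First the degenerate cases: if $N\le 2$, or if $p_1,\dots,p_N$ are collinear, then the two extreme among them are at distance $\ge N-1$, so $\mathrm{diam}(X)\ge N-1$, whence $P(X)\ge 2(N-1)$ and the right-hand side is already $\ge N$. So assume $N\ge 3$ and the points affinely span the plane. For convex $X$ --- in particular for the square, which is the case needed later --- one may moreover replace $X$ by $Y:=\mathrm{conv}\{p_1,\dots,p_N\}$, since $Y\subseteq X$ gives $A(Y)\le A(X)$ and $P(Y)\le P(X)$; the point of doing so is that then every vertex of the convex polygon $Y$ is one of the $p_i$.

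Now triangulate $Y$ using only the points $p_i$ as vertices (ear-clipping). Let $V=V_{\mathrm{int}}+V_{\mathrm{bd}}$, $E$, $F$ be the numbers of vertices, edges and triangles. A triangulated convex polygon is a topological disk, so Euler's formula together with the edge--triangle incidences gives $F=2V_{\mathrm{int}}+V_{\mathrm{bd}}-2$, hence
\[
N-1\ \le\ V-1\ =\ \tfrac12\bigl(F+V_{\mathrm{bd}}\bigr),
\]
using that the number of boundary edges equals $V_{\mathrm{bd}}$. For a triangle $\triangle$ of the triangulation let $b(\triangle)\in\{0,1,2,3\}$ count its edges on $\partial Y$ and $\ell(\triangle)$ be the total length of those edges; then $\sum_\triangle 1=F$, $\sum_\triangle b(\triangle)=V_{\mathrm{bd}}$, $\sum_\triangle\ell(\triangle)=P(Y)$ and $\sum_\triangle\mathrm{area}(\triangle)=A(Y)$. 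Since all three sides of every $\triangle$ have length $\ge 1$ (its vertices are $p_i$'s), the theorem reduces to the local inequality
\[
\mathrm{area}(\triangle)+\tfrac{\sqrt3}{4}\,\ell(\triangle)\ \ge\ \tfrac{\sqrt3}{4}\bigl(1+b(\triangle)\bigr)\qquad\text{for every }\triangle;
\]
summing it yields $A(Y)+\tfrac{\sqrt3}{4}P(Y)\ge\tfrac{\sqrt3}{4}(F+V_{\mathrm{bd}})\ge\tfrac{\sqrt3}{2}(N-1)$, which rearranges to $N\le\tfrac{2}{\sqrt3}A(Y)+\tfrac12 P(Y)+1$. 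This local inequality is an equality for a unit equilateral triangle with any number of its sides on the boundary, matching the fact that Oler's bound is sharp for triangular-lattice polygons.

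The crux --- and the reason Oler's original argument \cite{Oler} is long --- is that the displayed local inequality is \emph{false} for thin triangles: a triangle with sides $1,1,\varepsilon$ and no edge on $\partial Y$ has area $\to 0$ while the right side is $\sqrt3/4$. So an arbitrary triangulation will not do and the estimate must be amortized. The mechanism is that a thin triangle necessarily carries a long edge (nearly twice its shortest side); if that edge is on $\partial Y$ it over-contributes to the $\ell$-term, and if it is interior it is shared with a neighbour whose opposite vertex lies at distance $\ge 1$ from all three vertices of the thin triangle, forcing extra height --- hence extra area --- on that side. I would therefore replace the per-triangle bound by a global discharging scheme, shipping an area credit across each long interior edge (possibly along a chain of such edges) from the fatter to the thinner side, and show that the constants $\tfrac2{\sqrt3}$ and $\tfrac12$ are recovered exactly once every triangle has been compensated; calibrating this discharge is the delicate step. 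A cleaner alternative, which I would fall back on, is the shorter route of Folkman and Graham \cite{Graham}, or a Voronoi-dual argument: the part of $Y$ (or $X$) lying in the Voronoi cell of each $p_i$ has area $\ge\tfrac{\sqrt3}{2}$ up to a boundary defect controlled by $\tfrac{\sqrt3}{4}P$, the cell-area estimate being a lemma of Fejes T\'oth \cite{Fejes-Toth-book}, i.e.\ the Voronoi dual of the triangle lemma used in the Chang--Wang proof above (which in that form genuinely needs saturation, whereas here the configuration is not assumed saturated --- precisely the source of the difficulty). Finally, a non-convex polygon $X$ does not admit the reduction to $\mathrm{conv}\{p_i\}$; there one triangulates $X$ itself, taking the reflex vertices of $X$ as extra triangulation vertices, and the spurious boundary edges this introduces are absorbed by the angle deficit at those vertices --- this bookkeeping is where Oler's proof does most of its work, and polygons with holes or several components are then handled by additivity over the pieces.
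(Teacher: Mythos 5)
The paper does not actually prove Oler's theorem: it is quoted as a known result, with references to Oler \cite{Oler} and to the shorter proof of Folkman and Graham \cite{Graham}, and is then applied in the proof of Theorem \ref{thm:best-square}. So the only question is whether your sketch stands on its own, and it does not. Your reductions are correct and cleanly executed: the degenerate cases, the passage to $\mathrm{conv}\{p_1,\dots,p_N\}$ for convex $X$, the Euler-formula identity $F=2V_{\mathrm{int}}+V_{\mathrm{bd}}-2$ giving $N-1=\tfrac12(F+V_{\mathrm{bd}})$, and the observation that everything would follow from the per-triangle inequality $\mathrm{area}(\triangle)+\tfrac{\sqrt3}{4}\ell(\triangle)\ge\tfrac{\sqrt3}{4}(1+b(\triangle))$. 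But, as you yourself note, that inequality is \emph{false} for a thin interior triangle with sides $1,1,\varepsilon$, so the entire weight of the proof rests on the amortization you describe only in outline (``shipping an area credit across each long interior edge\dots calibrating this discharge is the delicate step''). That calibration \emph{is} the theorem; without it the argument proves nothing. The Voronoi fallback has the same status: the hexagon bound gives each cell area at least $\tfrac{\sqrt3}{2}$ without any saturation hypothesis, but controlling the boundary defect by $\tfrac{\sqrt3}{4}P(X)$ --- the cells of points near $\partial X$ are truncated or unbounded --- is exactly the hard part and is again only asserted. And the final fallback, invoking Folkman and Graham, is a citation, which is all the paper itself does.

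A second, independent gap is the non-convex case, which the statement as given covers. The reduction to $\mathrm{conv}\{p_i\}$ is unavailable there, and your proposed fix --- triangulating $X$ itself with the reflex vertices of $X$ as additional triangulation vertices --- destroys the one property your scheme depends on, namely that every triangle side has length at least $1$, since those extra vertices need not be at distance $\ge 1$ from the $p_i$ or from one another; the claim that the resulting ``spurious boundary edges\dots are absorbed by the angle deficit'' is not an argument. In short: the skeleton is right and is faithful in spirit to the known proofs, and the identification of the obstruction (thin triangles) is exactly correct, but the load-bearing step is missing, so what you have is a proof plan rather than a proof.
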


\begin{theorem}\label{thm:best-square}
Assume we have any packing of $N$ equal circles in a square with density $\delta$. Then there exists a constant $k>0$ such that \[\left|\frac{\pi}{\sqrt{12}} - \delta \right| > \frac{k}{\sqrt{N}}. \] 
\end{theorem}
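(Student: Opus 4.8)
The plan is to use Oler's inequality in the direction opposite to how it is usually applied: instead of bounding $N$ from above, I will use it to bound the radius of the packing disks from above, and hence the density from above, thereby forcing the error $\frac{\pi}{\sqrt{12}}-\delta$ to be bounded below. Concretely, normalize so that the square $X$ has side length $s$ and the $N$ equal disks have radius $r$; the centers have pairwise distances at least $2r$, and they lie in the smaller square $X'$ of side $s-2r$ (the centers are at distance at least $r$ from each side). Rescaling distances by $1/(2r)$, Oler's theorem applied to $X'$ gives
\[
N \le \frac{2}{\sqrt{3}}\cdot\frac{(s-2r)^2}{4r^2} + \frac{1}{2}\cdot\frac{4(s-2r)}{2r} + 1
= \frac{(s-2r)^2}{2\sqrt{3}\,r^2} + \frac{s-2r}{r} + 1.
\]
Since the density is $\delta = N\pi r^2/s^2$, this yields
\[
\delta \le \frac{\pi}{2\sqrt{3}}\cdot\frac{(s-2r)^2}{s^2} + \frac{\pi r(s-2r)}{s^2} + \frac{\pi r^2}{s^2}
\le \frac{\pi}{2\sqrt{3}} + \frac{C r}{s}
\]
for an absolute constant $C$, after expanding and discarding the negative cross terms against the positive lower-order ones (the $(s-2r)^2/s^2 \le 1$ factor loses nothing we need, but the leading term actually comes with a favorable sign from $-4rs$, which only helps). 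So I get $\frac{\pi}{\sqrt{12}}-\delta \ge -Cr/s = -C'\sqrt{\delta}/\sqrt{N}$ up to adjusting constants, using $r/s = \sqrt{\delta/(N\pi)}$.

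The second ingredient is a crude but uniform lower bound on the density of the \emph{optimal} packing, namely $\delta_\Box(N) \ge c_0 > 0$ for all $N$ (for instance a square grid packing of $\lceil\sqrt N\rceil^2 \ge N$ disks gives density bounded below, and one can always delete disks; alternatively any greedy/saturated packing has density at least $\pi/16$). This is what lets me convert $r/s$ into something comparable to $1/\sqrt N$: from $r/s = \sqrt{\delta/(N\pi)} \ge \sqrt{c_0/(N\pi)}$ we get $r/s \ge c_1/\sqrt N$, hence
\[
\frac{\pi}{\sqrt{12}} - \delta \ge \frac{c_1'}{\sqrt N}
\]
for the optimal $\delta = \delta_\Box(N)$, which is exactly the claimed inequality with $k = c_1'$ (the absolute value is harmless since $\delta < \pi/\sqrt{12}$ by Thue applied inside the square, or trivially by the first paragraph). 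I should remark that the statement as written quantifies over ``any packing,'' but the meaningful content — and what is actually proved — is the bound for the maximal density $\delta_\Box(N)$; I will phrase the conclusion accordingly, matching the introduction.

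The main obstacle is making the perimeter term in Oler's inequality genuinely negligible and keeping the constants honest: one must be careful that $N$ is large (for small $N$ the inequality can be absorbed into the constant $k$ by shrinking it), and that the ``$+1$'' and the linear term $\frac{1}{2}P$ in Oler truly contribute only $O(r/s)$ to $\delta$ rather than something larger, which requires the a priori bound $r \le s/2$ and the relation $N r^2 \asymp s^2$ again. A secondary point requiring care is the direction of the inequality when $s - 2r$ is close to $0$ (very few, very large disks), but there $N$ is bounded and the estimate is trivial. I also need to confirm that Oler applies to the shrunken square $X'$ with the rescaled unit distance — it does, since $X'$ is a compact polygon and the rescaled centers have pairwise distance $\ge 1$ — and that no disk center lies outside $X'$, which is immediate from the disks being contained in $X$.
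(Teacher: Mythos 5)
Your strategy is the same as the paper's: apply Oler's theorem to the shrunken square containing the centers, deduce that $\delta$ is at most $\frac{\pi}{2\sqrt{3}}$ minus a positive multiple of $r/s$, and then convert $r/s$ into $1/\sqrt{N}$ via a lower bound on the density (the paper does this last step by a case split on whether $\frac{\pi}{\sqrt{12}}-\delta\ge\frac12$, which is your dichotomy in disguise). But as written, your central estimate has the wrong sign and proves nothing. You bound
\[
\delta \le \frac{\pi}{2\sqrt{3}}\cdot\frac{(s-2r)^2}{s^2} + \frac{\pi r(s-2r)}{s^2} + \frac{\pi r^2}{s^2}\le \frac{\pi}{2\sqrt{3}} + \frac{C r}{s}
\]
and conclude $\frac{\pi}{\sqrt{12}}-\delta\ge -Cr/s$. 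A lower bound by a negative quantity is vacuous, and combining it with $r/s\ge c_1/\sqrt{N}$ cannot yield the positive bound $c_1'/\sqrt{N}$ claimed in the next line. Discarding the cross term via $(s-2r)^2/s^2\le 1$ throws away exactly what the theorem needs.

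The entire content of the proof lives in the step you mention only parenthetically (``the leading term comes with a favorable sign''): you must keep the $-\frac{4r}{s}$ term from expanding $\frac{(s-2r)^2}{s^2}$ and check that the resulting negative contribution $-\frac{2\pi}{\sqrt{3}}\cdot\frac{r}{s}$ strictly dominates the positive perimeter contribution $\frac{\pi r(s-2r)}{s^2}\le\frac{\pi r}{s}$. This works precisely because $\frac{2}{\sqrt{3}}>1$; carrying out the algebra with $t=r/s$ gives
\[
\delta\le\frac{\pi}{2\sqrt{3}}-\pi\Bigl(\frac{2}{\sqrt{3}}-1\Bigr)t(1-t),
\]
so for $r\le s/2$ one gets $\frac{\pi}{\sqrt{12}}-\delta\ge\frac{\pi}{2}\bigl(\frac{2}{\sqrt{3}}-1\bigr)\frac{r}{s}$, which is exactly the paper's inequality $\frac{\pi}{\sqrt{12}}-\delta\ge\frac{\pi}{4}\bigl(\frac{2}{\sqrt{3}}-1\bigr)\frac{2a+1}{(a+1)^2}$ in your normalization. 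Once that is established, your closing dichotomy (either $\delta\ge c_0$, whence $r/s=\sqrt{\delta/(\pi N)}\ge\sqrt{c_0/(\pi N)}$, or $\delta<c_0$ and the error already exceeds the constant $\frac{\pi}{\sqrt{12}}-c_0$) does finish the proof; note it works for an arbitrary packing, not only the optimal one, so the restriction to $\delta_{\Box}(N)$ is unnecessary.
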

\begin{proof}
Note that $\frac{\pi}{\sqrt{12}} - \delta > 0$, because $\frac{\pi}{\sqrt{12}}$ is an upper bound on the packing density of tilings of the plane and thus on the packing density of tilings of the square. We will write $\frac{\pi}{\sqrt{12}} - \delta$ instead of $|\frac{\pi}{\sqrt{12}} - \delta|$.

We may assume that the radius of our equal circles is $1$, and vary the width of the square. Oler's theorem tells us that for a fixed side length $a$ of a square in $\EE^2$, the number $N$ of points that may be placed within the square, all of which are at distance at least $1$, satisfies 
\[N \le \frac 2{\sqrt{3}} a^2 + 2a + 1. \]
Consider our square containing circles as containing centered within it a smaller square, with boundary margin of width $1/2$. Let $a$ be the side length of the smaller square, so that $a+1$ is the side length of the square in the packing. All centers of circles must lie within the smaller square, so Oler's theorem applies; thus for a square of side length $a+1$, we have $N \le \frac 2{\sqrt{3}} a^2 + 2a + 1.$

The density $\delta$ is given by 
\begin{align*}
\delta &= \frac{\pi(\frac 12)^2N}{(a+1)^2} \\
\Rightarrow \delta &\le \frac{\pi\left(\frac 2{\sqrt{3}} a^2 + 2a + 1\right)}{4(a+1)^2}.
\end{align*}

This leads to the computation that

\begin{align*}
\frac{\pi}{\sqrt{12}} - \delta &\ge \frac{\pi}{2\sqrt{3}} - \frac{\pi}{4(a+1)^2}\left(\frac 2{\sqrt{3}} a^2 + 2a + 1 \right) \\
\Rightarrow \frac{\pi}{\sqrt{12}} - \delta &\ge \frac \pi 4 \left(\frac 2{\sqrt{3}} -  \frac 2{\sqrt{3}}\frac{a^2}{(a+1)^2} - \frac{2a}{(a+1)^2} - \frac 1{(a+1)^2} \right) \\
\Rightarrow \frac{\pi}{\sqrt{12}} - \delta &\ge \frac \pi 4 \left(\frac 2{\sqrt{3}} - \frac 2{\sqrt{3}} \frac{a^2 + 2a + 1}{(a+1)^2} - \left(1 - \frac 2{\sqrt{3}} \right) \left(\frac{2a+1}{(a+1)^2} \right) \right) \\
\Rightarrow \frac{\pi}{\sqrt{12}} - \delta &\ge \frac \pi 4 \left(\frac 2{\sqrt{3}} - 1\right) \left(\frac{2a + 1}{(a+1)^2}\right). \\
\end{align*}

We examine two cases: when $\frac{\pi}{\sqrt{12}} - \delta < \frac 12$, and when $\frac{\pi}{\sqrt{12}} - \delta \ge \frac 12$.

\paragraph{Case 1: $\frac{\pi}{\sqrt{12}} - \delta \ge \frac 12$:}

Since we have $N \ge 1$, $N^{1/2} \ge 1$, so $(\frac{\pi}{\sqrt{12}} - \delta) N^{1/2} \ge \frac 12$. Thus there exists a constant $k_1 = \frac 12$ so that $(\frac{\pi}{\sqrt{12}} - \delta) \ge \frac{k_1}{N^{1/2}}$.

\paragraph{Case 2: $\frac{\pi}{\sqrt{12}} - \delta < \frac 12$:}

In this case, $\frac{\pi}{\sqrt{12}} - \frac 12 < \frac \pi 4 \frac N{(a+1)^2}.$ We can then compute:

\begin{align*}
N &> \frac{4(a+1)^2}{\pi} \left(\frac{\pi}{2\sqrt 3} - \frac 12 \right) \\
\Rightarrow N^{1/2} &> \frac{2(a+1)}{\sqrt\pi} \sqrt{\frac{\pi}{2\sqrt 3} - \frac 12} \\
\Rightarrow \left(\frac{\pi}{\sqrt{12}} - \delta \right) N^{1/2} &> \frac \pi 4 \left(\frac 2{\sqrt{3}} - 1\right) \left(\frac{2a + 1}{(a+1)^2}\right)\frac{2(a+1)}{\sqrt\pi} \sqrt{\frac{\pi}{2\sqrt 3} - \frac 12} \text{, using the bound from above} \\
\Rightarrow \left(\frac{\pi}{\sqrt{12}} - \delta \right) N^{1/2} &> \frac{\sqrt \pi}{2} \left(\frac 2{\sqrt 3} - 1 \right) \frac{2a + 1}{a + 1}\sqrt{\frac{\pi}{2\sqrt 3} - \frac 12} \text{, where $a > 1$} \\
\Rightarrow \left(\frac{\pi}{\sqrt{12}} - \delta \right) N^{1/2} &> \frac{\pi}2 \left(\frac 2{\sqrt{3}} - 1 \right) \frac 32 \sqrt{\frac\pi{\sqrt{12}} - \frac 12}.
\end{align*}

Then setting $k_2 = \frac{\pi}2 \left(\frac 2{\sqrt{3}} - 1 \right) \frac 32 \sqrt{\frac\pi{\sqrt{12}} - \frac 12} \approx 0.2325$, we have $\left(\frac{\pi}{\sqrt{12}} - \delta \right) N^{1/2} \ge k_2$.

Since $k_2 < k_1$, we always have $\left(\frac{\pi}{\sqrt{12}} - \delta \right) N^{1/2} \ge k_2$, just as desired.
\end{proof}

\section{Numerical Experiments and Proposed Best Packings} \label{section:Best}
 In order to experimentally determine optimal densities, radii, and configurations for certain numbers of disks, denoted $N$, on a square torus, we wrote a program in Java using chaos to find the most tightly-packed configurations following a similar program by Alex Smith for the triangular torus. The program was most reliable for fairly small values, i.e. $N < 30$, as it consistently converged to the same configuration and found no arrangement with a higher density.

For $1 \le N \le 5$, optimal packings and their corresponding densities have been known for some time as mentioned above, and for $6 \le N \le 8$ the highest density configurations were reported by Musin and Nikitenko \cite{Musin}. They also offered a conjecture for $N = 9$. Presented in the following figures are the configurations, radii, and densities and Markov constant for $N = 6$ to $16$.  Here it is assumed that the total area is $1$. The findings for $6$ to $9$ disks corroborate the findings and conjectures published by \cite{Musin}, and the findings for $10$ to $16$ disks we offer as conjectures.
\newpage
\subsection{$6 \le N \le 9$: Corroborating Musin and Nikitenko}\label{subsection:Musin}
\begin{figure}[!htb]
    \centering
    \begin{minipage}{.5\textwidth}
        \centering
        \includegraphics[width=0.7\linewidth]{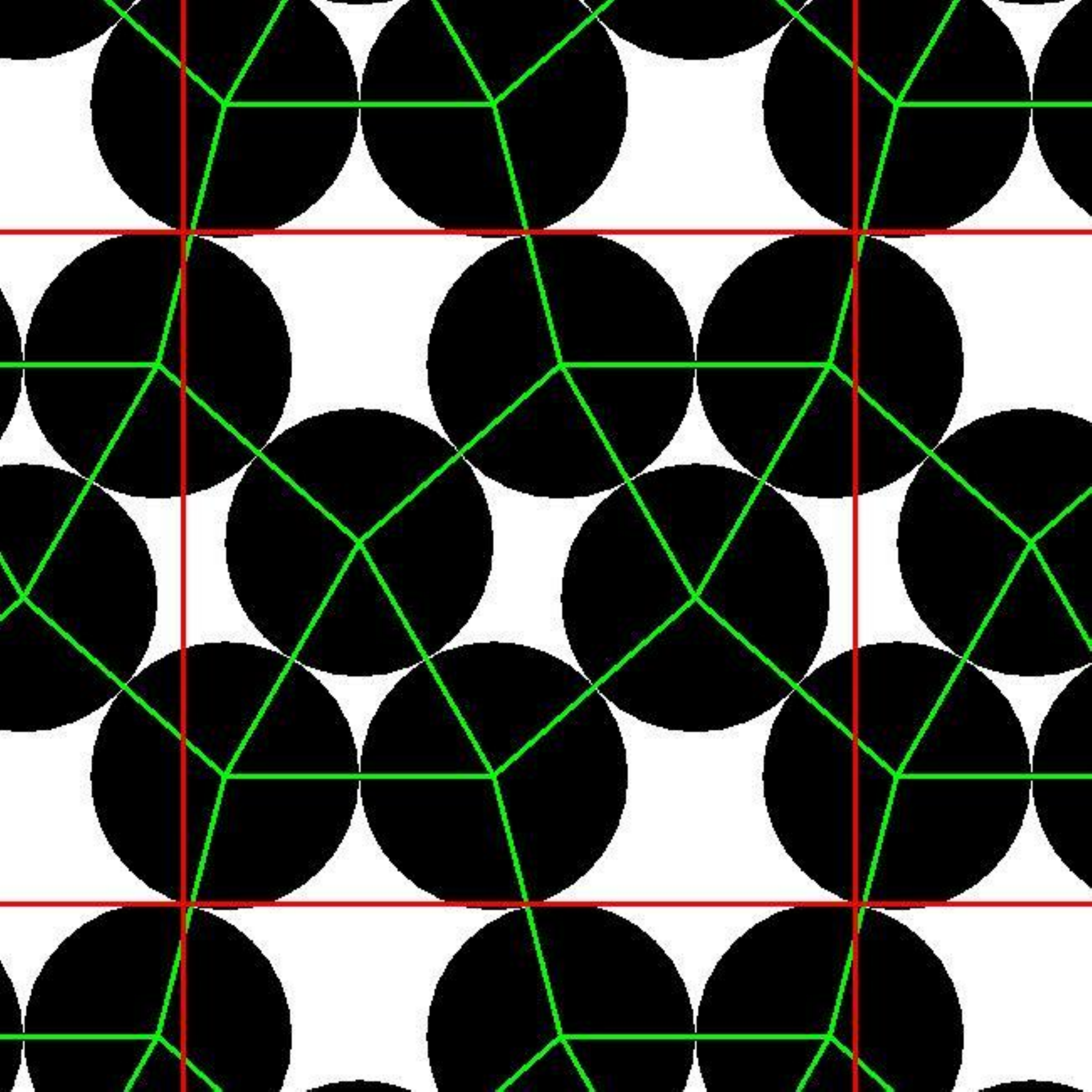}
        \caption{$N=6$}
        \label{fig:6}
    \end{minipage}%
    \begin{minipage}{0.5\textwidth}
        \centering
        \includegraphics[width=0.7\linewidth]{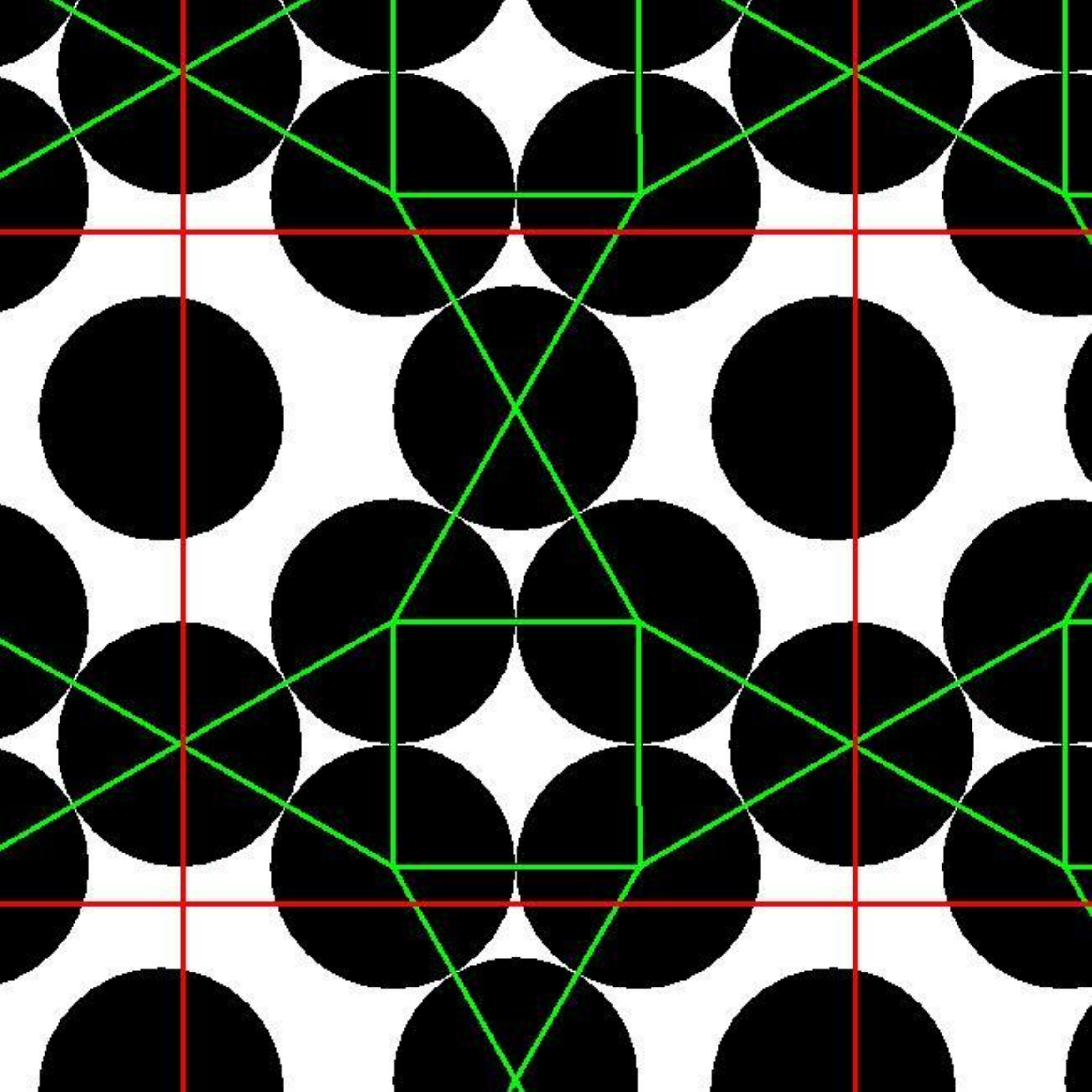}
        \caption{$N=7-1$}
        \label{fig:7-1}
         \end{minipage}\\%
     \begin{minipage}{.5\textwidth}
        \centering
        \includegraphics[width=0.7\linewidth]{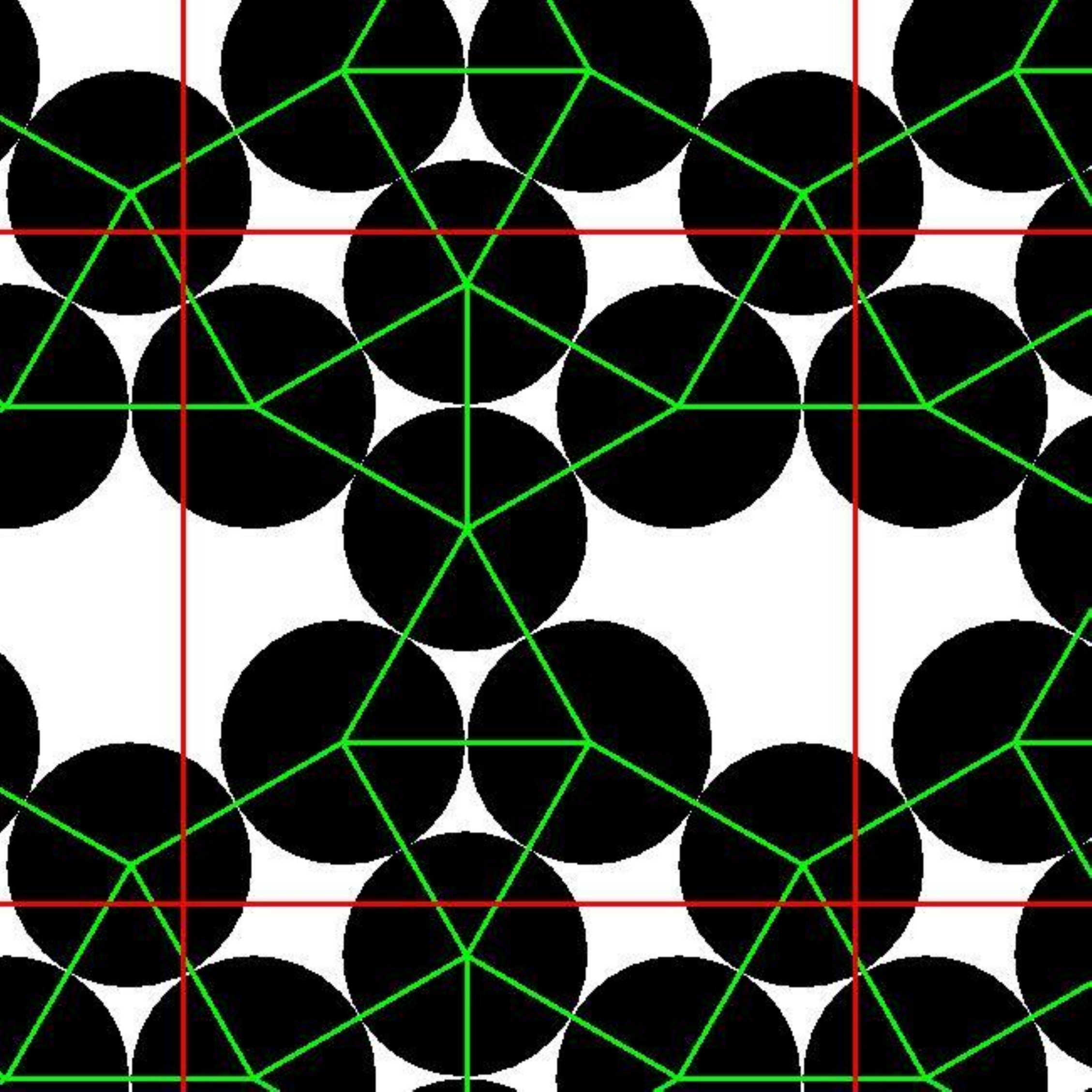}
        \caption{$N=7-2$}
        \label{fig:7-2}
    \end{minipage}%
    \begin{minipage}{0.5\textwidth}
        \centering
        \includegraphics[width=0.7\linewidth]{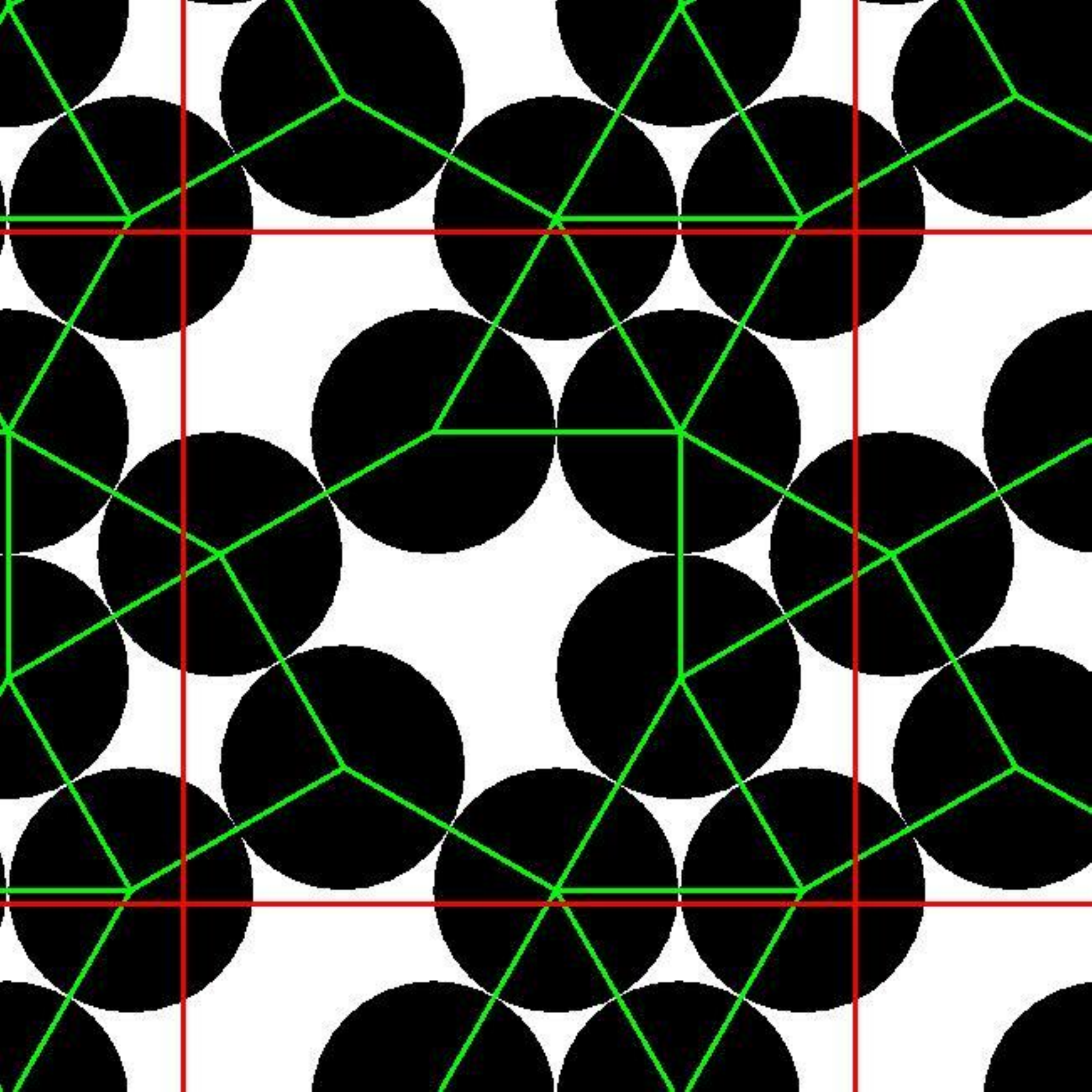}
        \caption{$N=7-3$}
        \label{fig:7-3}    
        \end{minipage}\\%
      \begin{minipage}{.5\textwidth}
        \centering
        \includegraphics[width=0.7\linewidth]{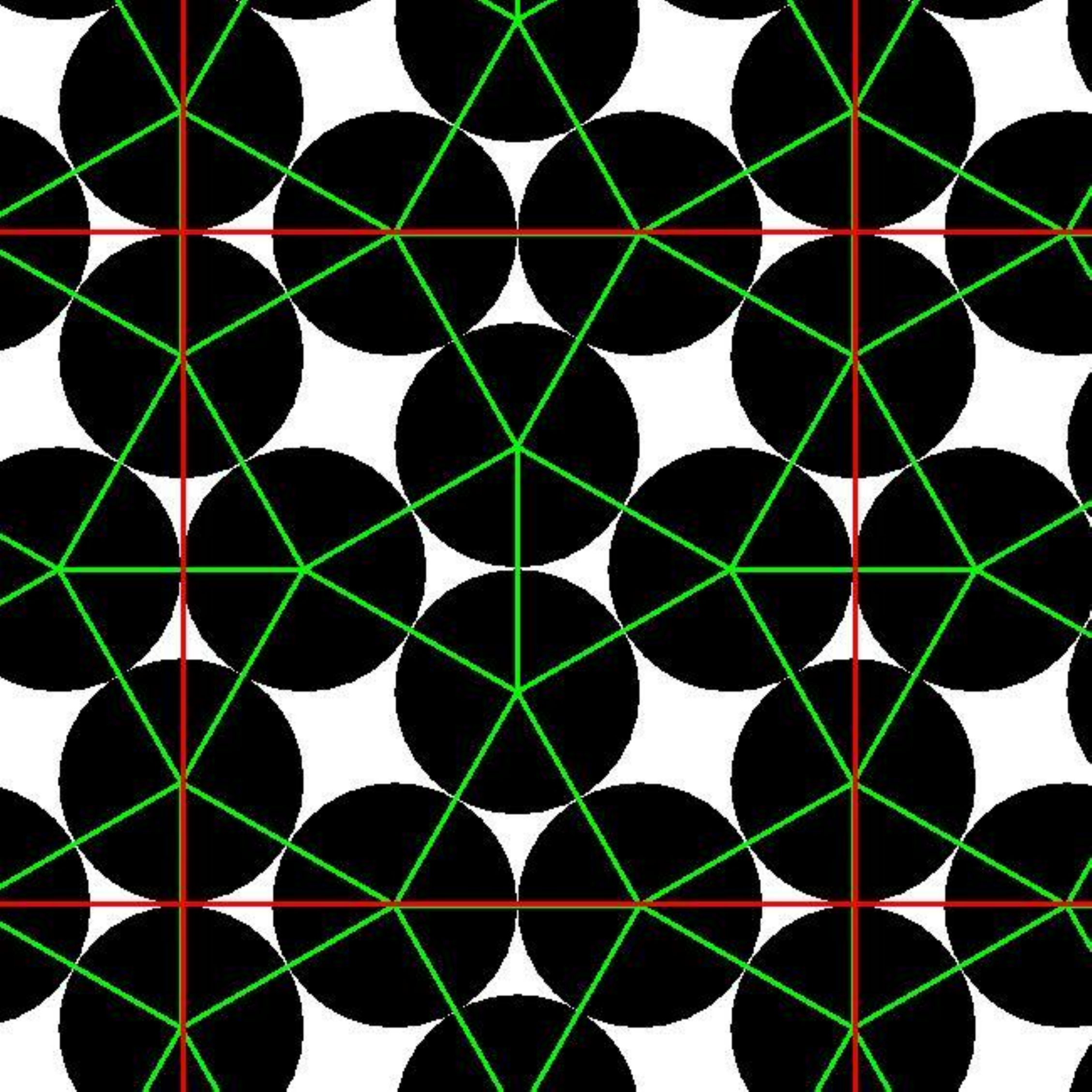}
        \caption{$N=8$}
        \label{fig:8'}
    \end{minipage}%
    \begin{minipage}{0.5\textwidth}
        \centering
        \includegraphics[width=0.7\linewidth]{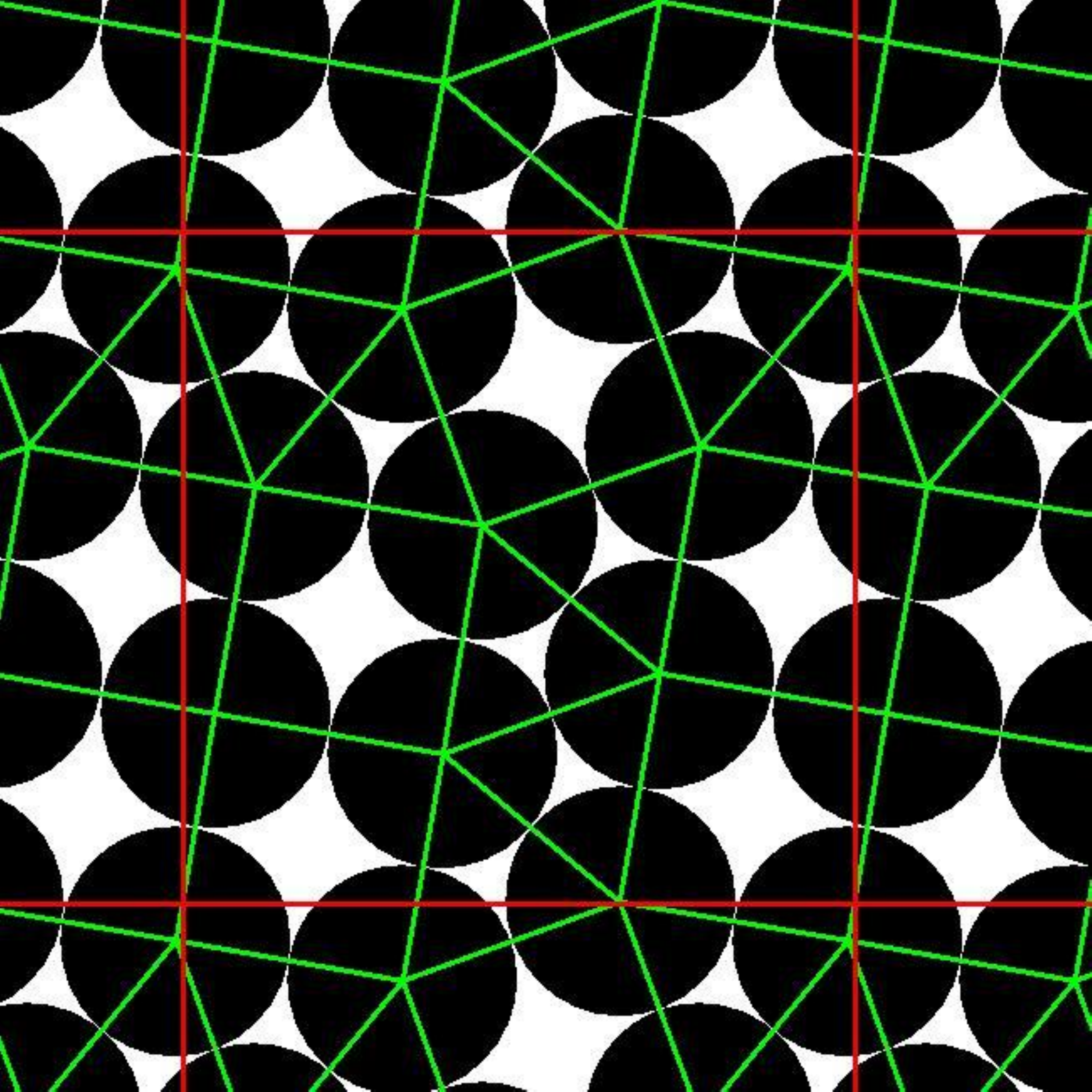}
        \caption{$N=9$}
        \label{fig:9}    
        \end{minipage}\\%
    \end{figure}
\clearpage
\subsection{$10 \le N \le 16$: Conjectures}\label{subsection:conjectures}
\begin{figure}[!htb]
    \centering
    \begin{minipage}{.5\textwidth}
        \centering
        \includegraphics[width=0.7\linewidth]{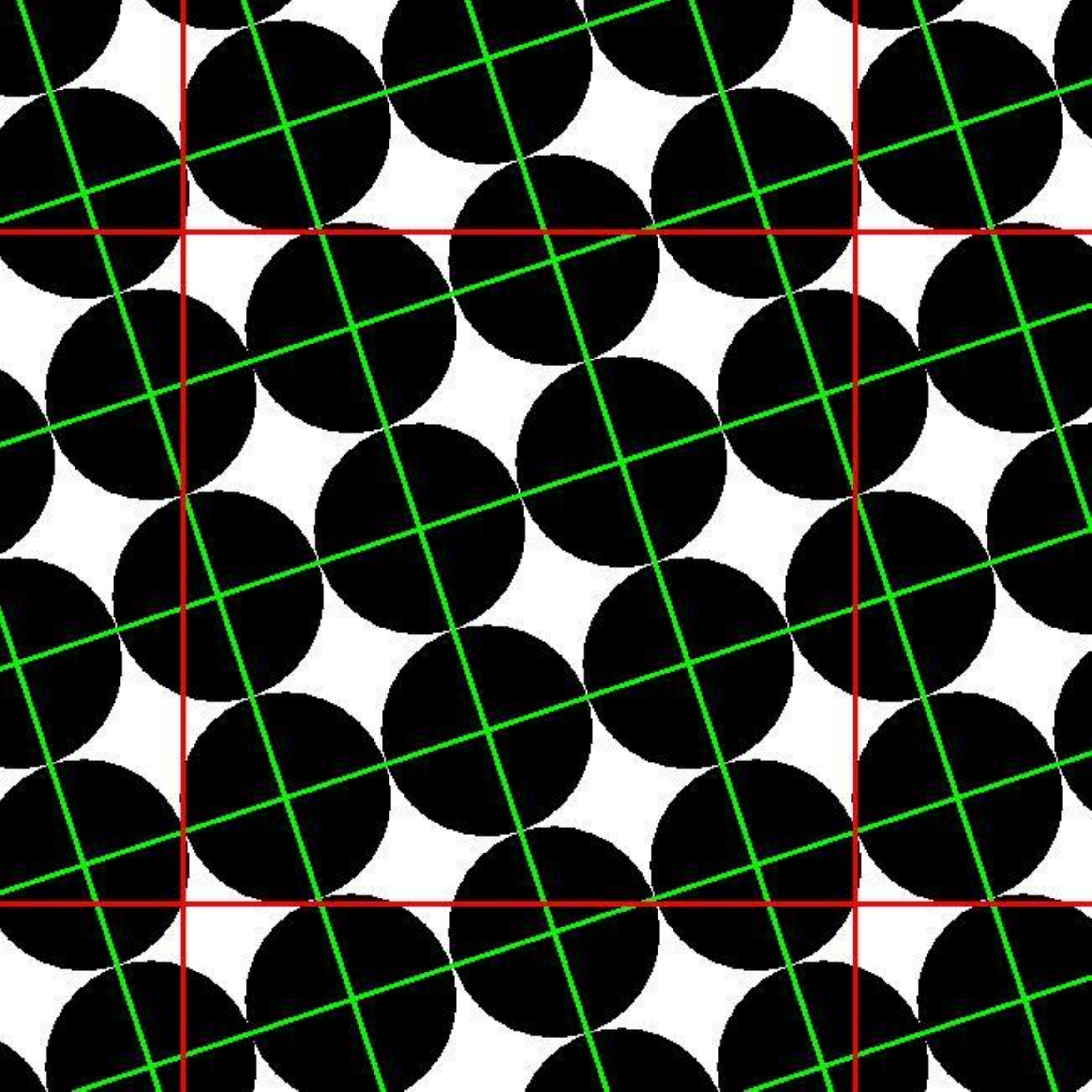}
        \caption{$N=10$}
        \label{fig:10}
    \end{minipage}%
    \begin{minipage}{0.5\textwidth}
        \centering
        \includegraphics[width=0.7\linewidth]{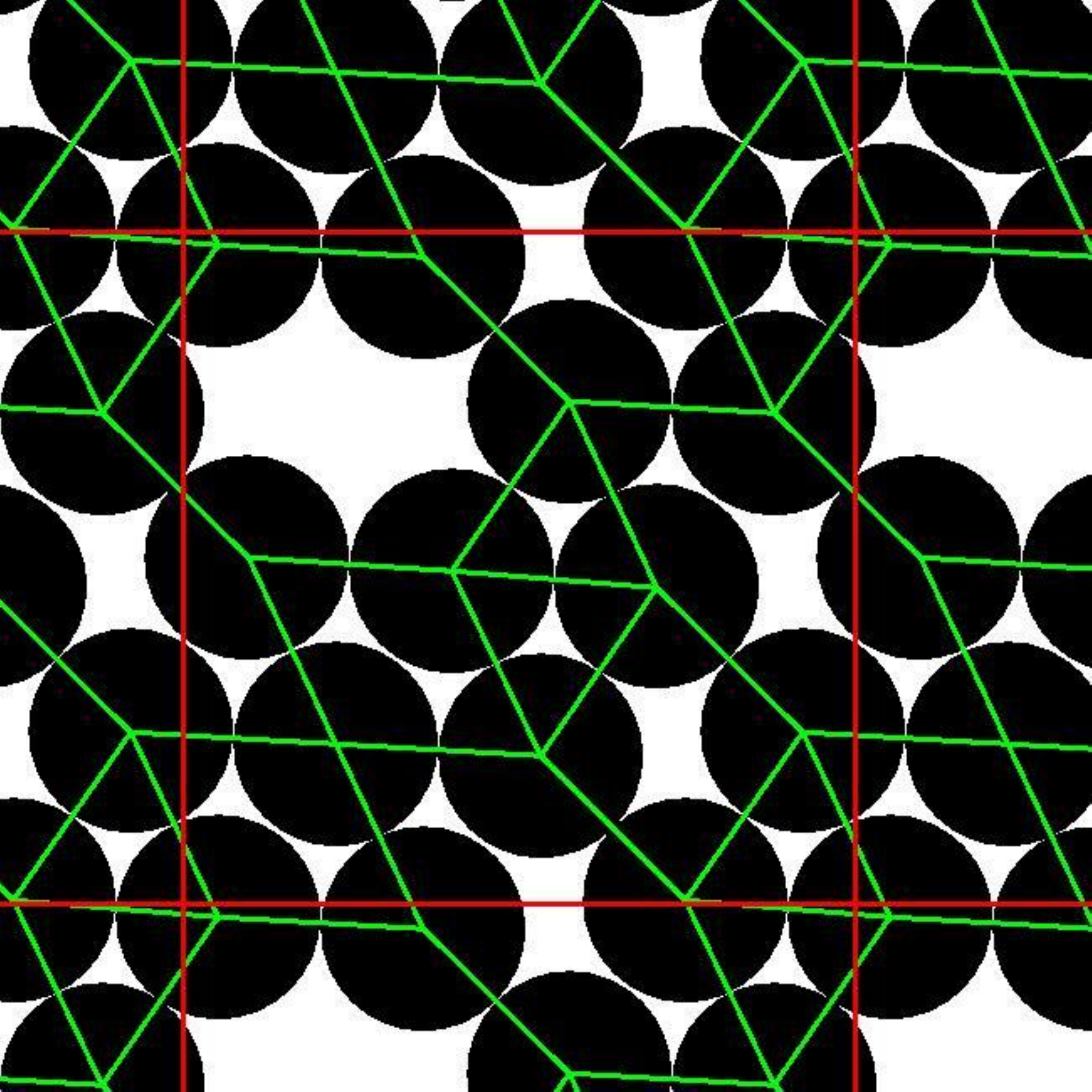}
        \caption{$N=11$}
        \label{fig:11}
         \end{minipage}\\%
     \begin{minipage}{.5\textwidth}
        \centering
        \includegraphics[width=0.7\linewidth]{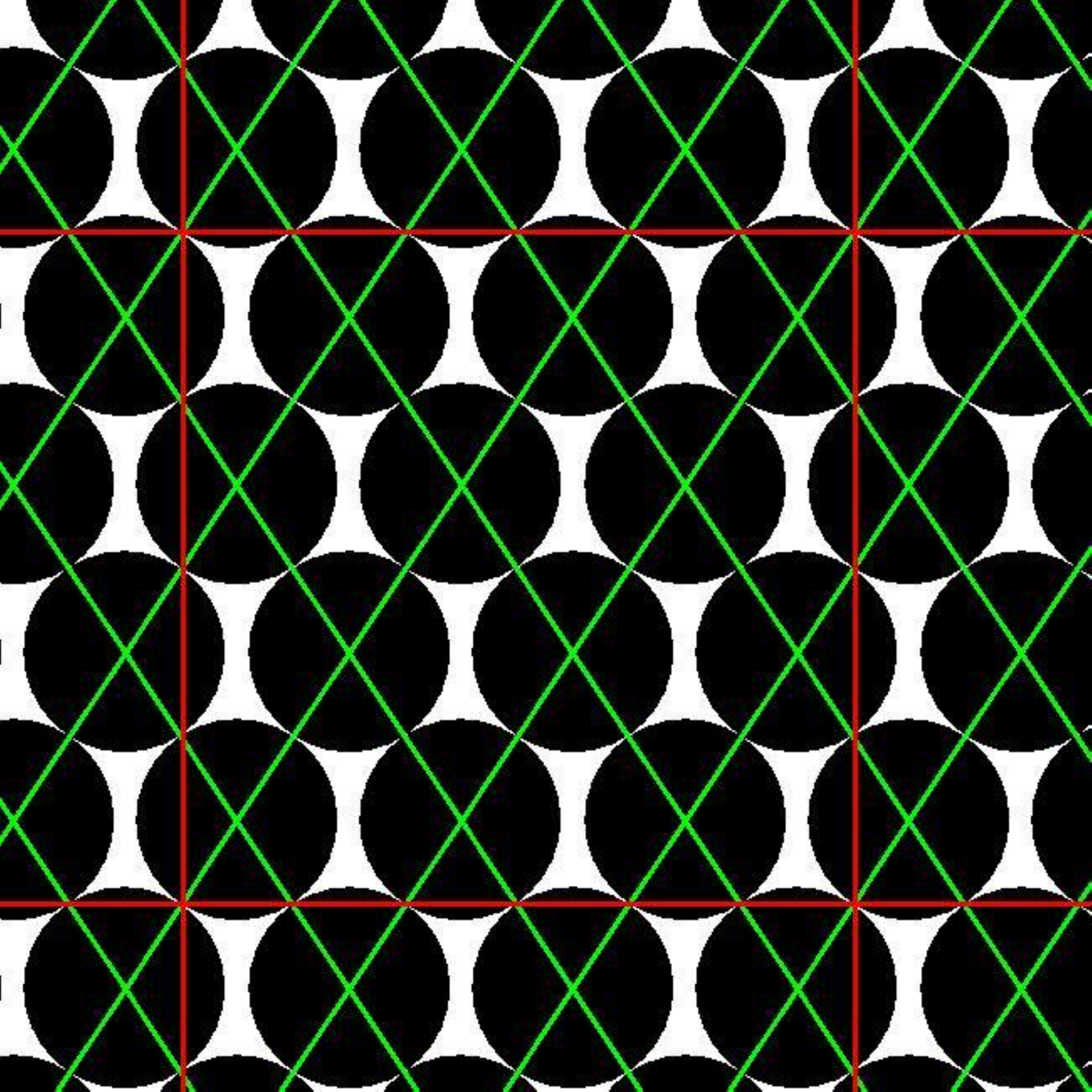}
        \caption{$N=12$}
        \label{fig:12'}
    \end{minipage}%
    \begin{minipage}{0.5\textwidth}
        \centering
        \includegraphics[width=0.7\linewidth]{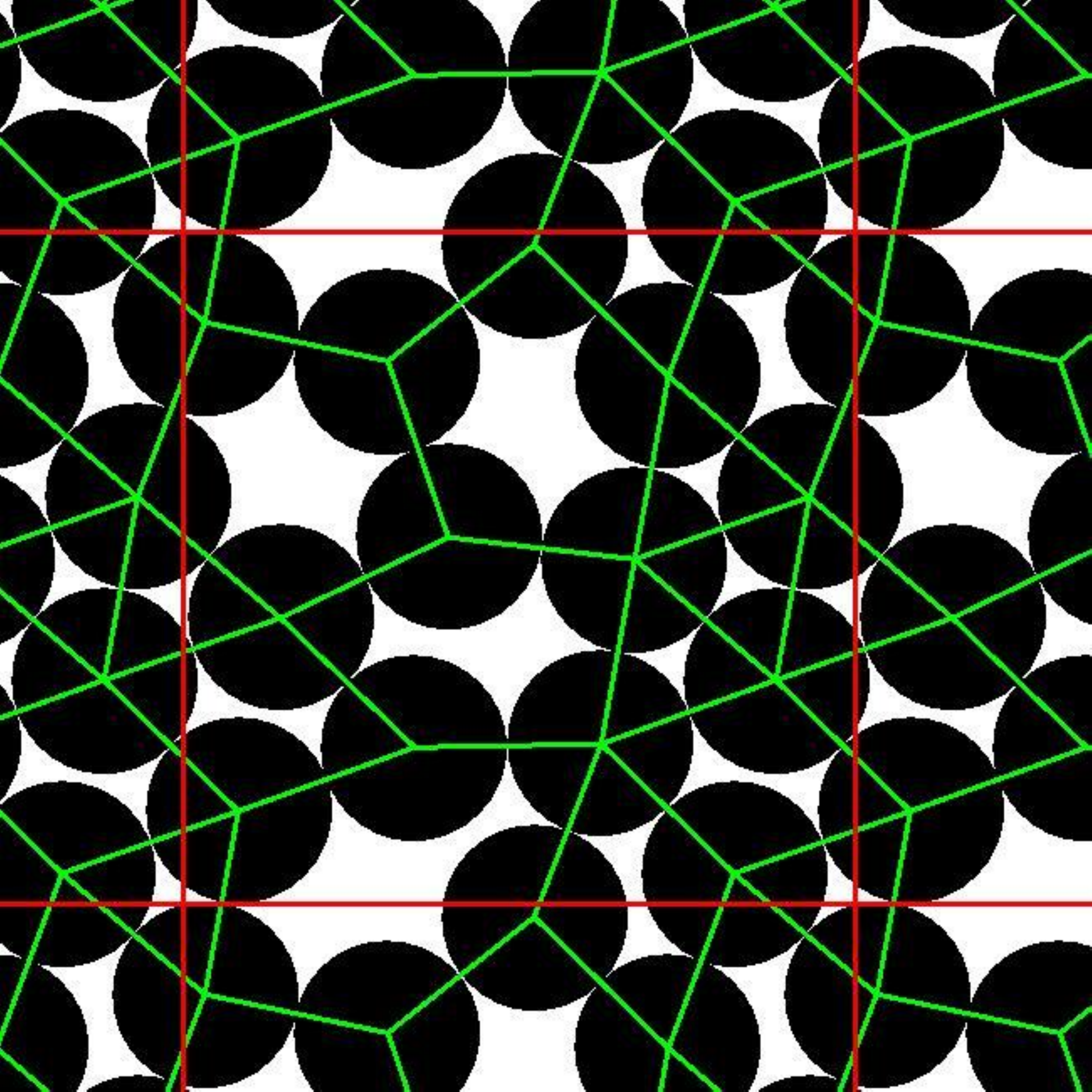}
        \caption{$N=13$}
        \label{fig:13}    
        \end{minipage}\\%
      \begin{minipage}{.5\textwidth}
        \centering
        \includegraphics[width=0.7\linewidth]{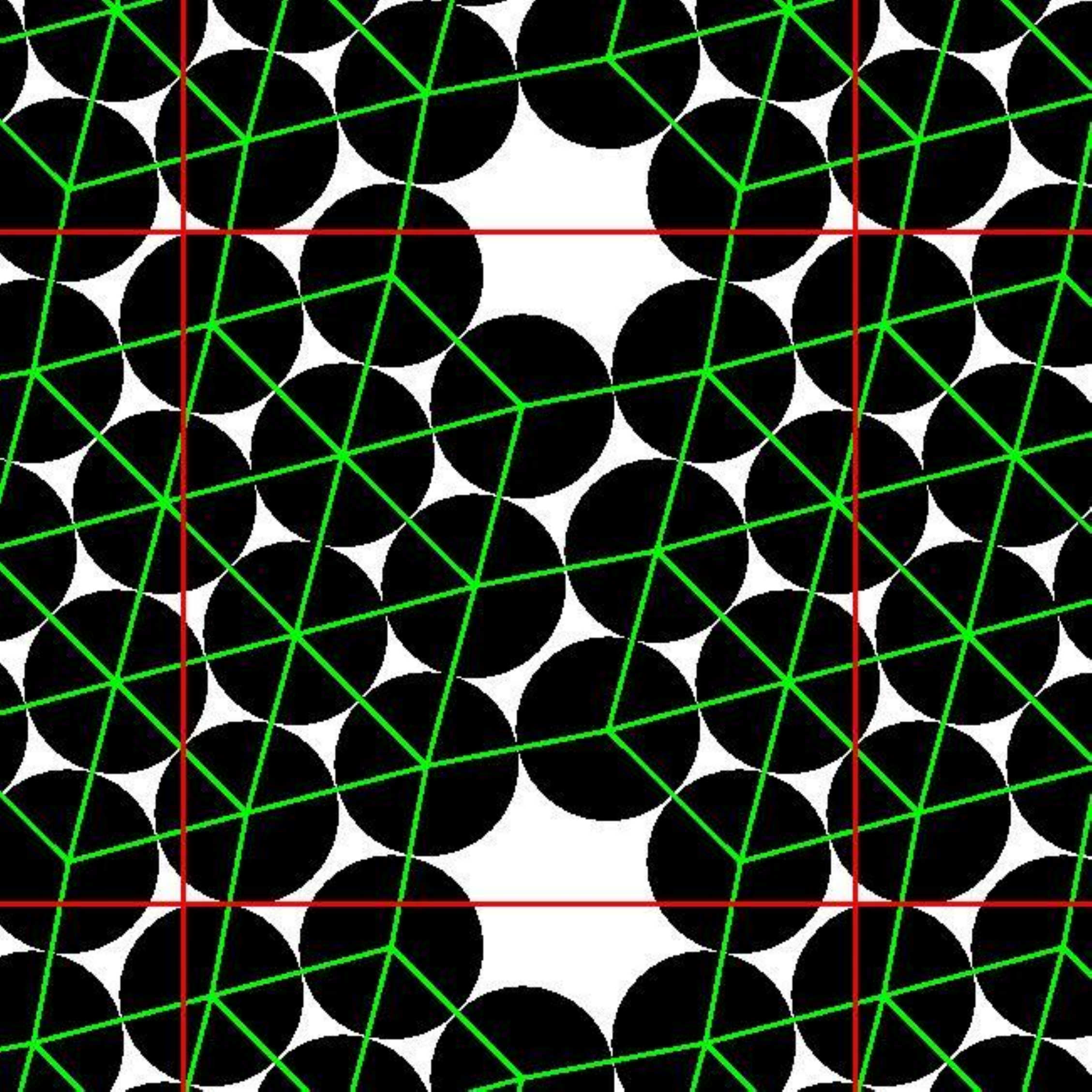}
        \caption{$N=14$}
        \label{fig:14}
    \end{minipage}%
    \begin{minipage}{0.5\textwidth}
        \centering
        \includegraphics[width=0.7\linewidth]{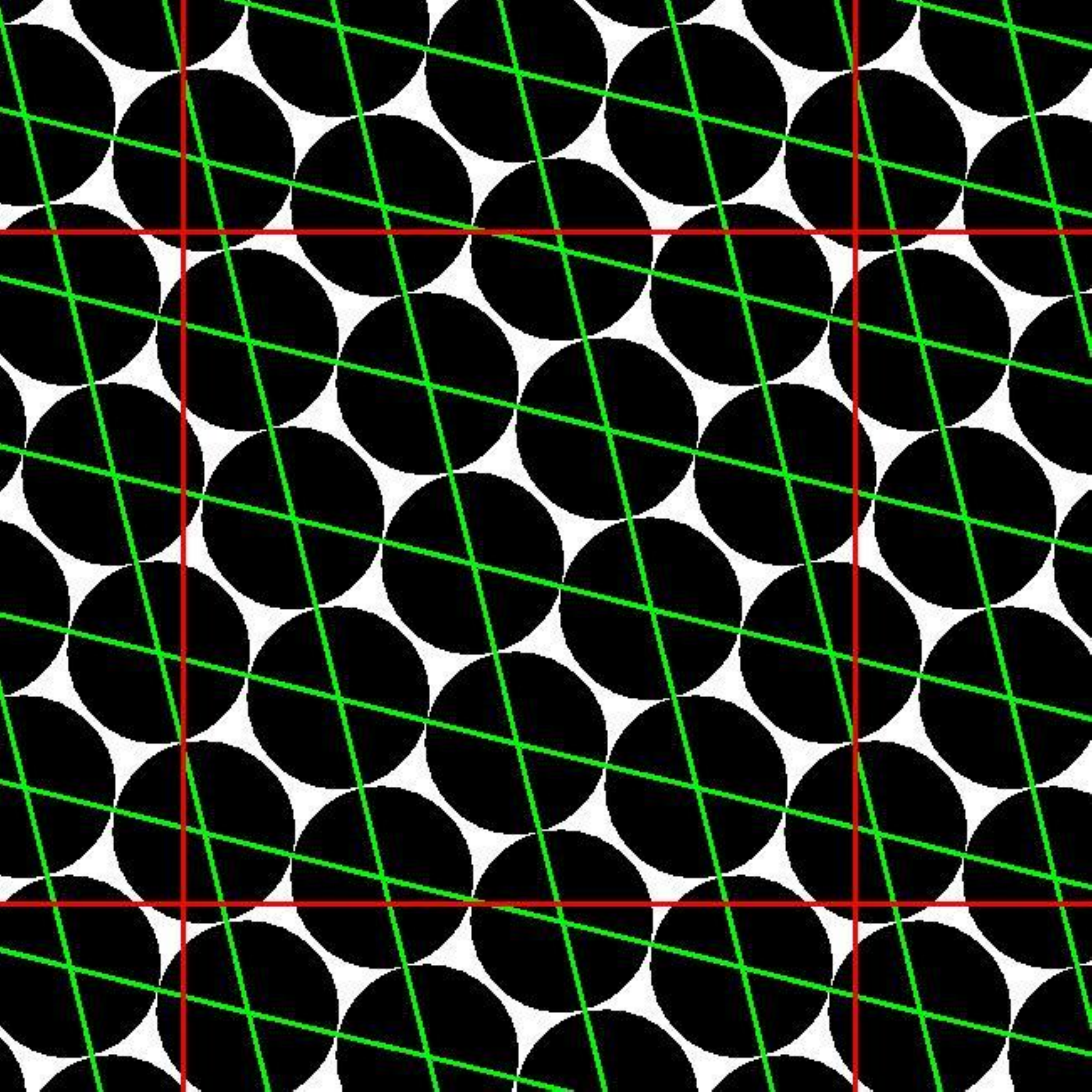}
        \caption{$N=15$}
        \label{fig:15}    
        \end{minipage}\\%
    \end{figure}
    \clearpage
\begin{figure}[!htb]
    \centering
    \begin{minipage}{.5\textwidth}
        \centering
        \includegraphics[width=0.7\linewidth]{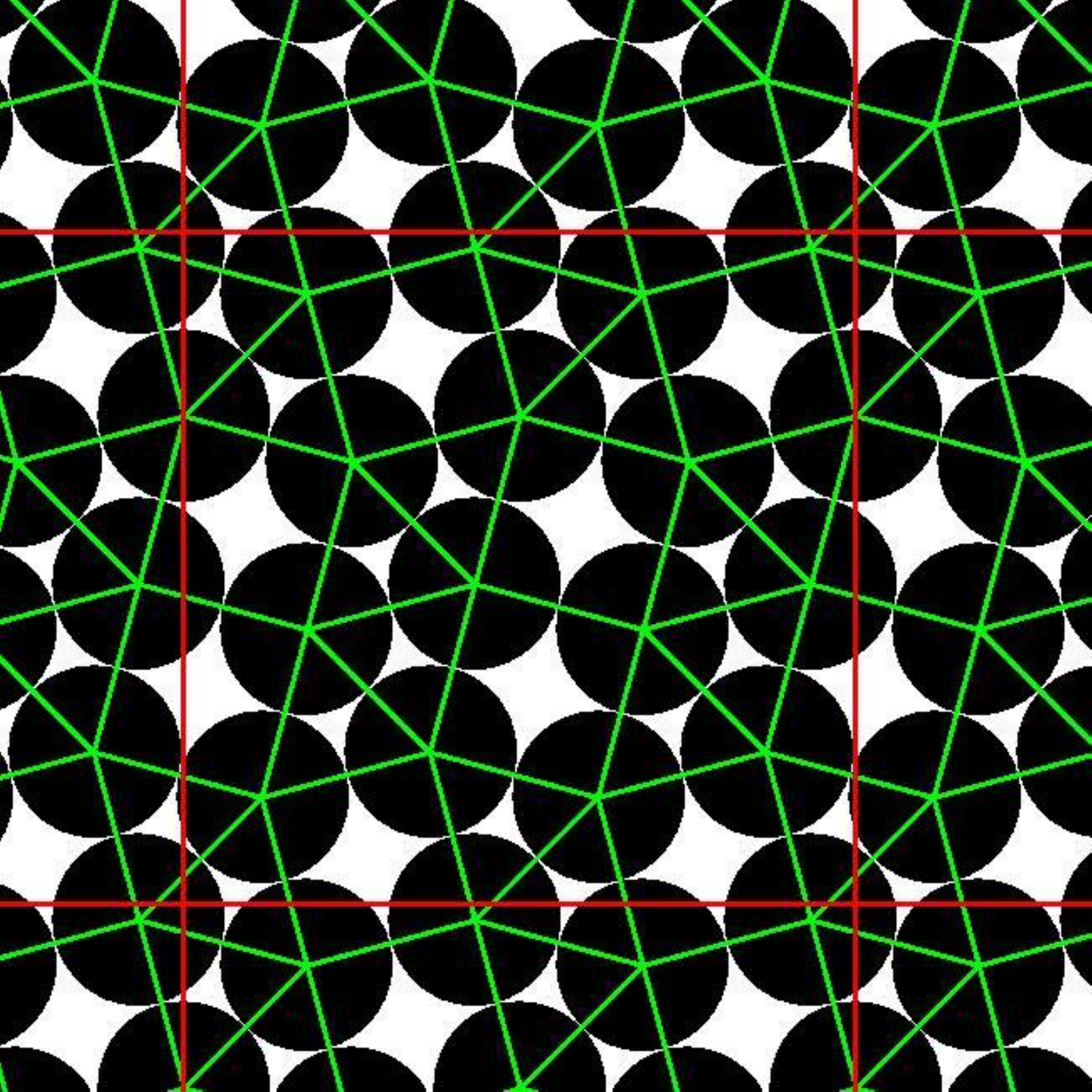}
        \caption{$N=16-1$}
        \label{fig:16-1}
    \end{minipage}%
    \begin{minipage}{0.5\textwidth}
        \centering
        \includegraphics[width=0.7\linewidth]{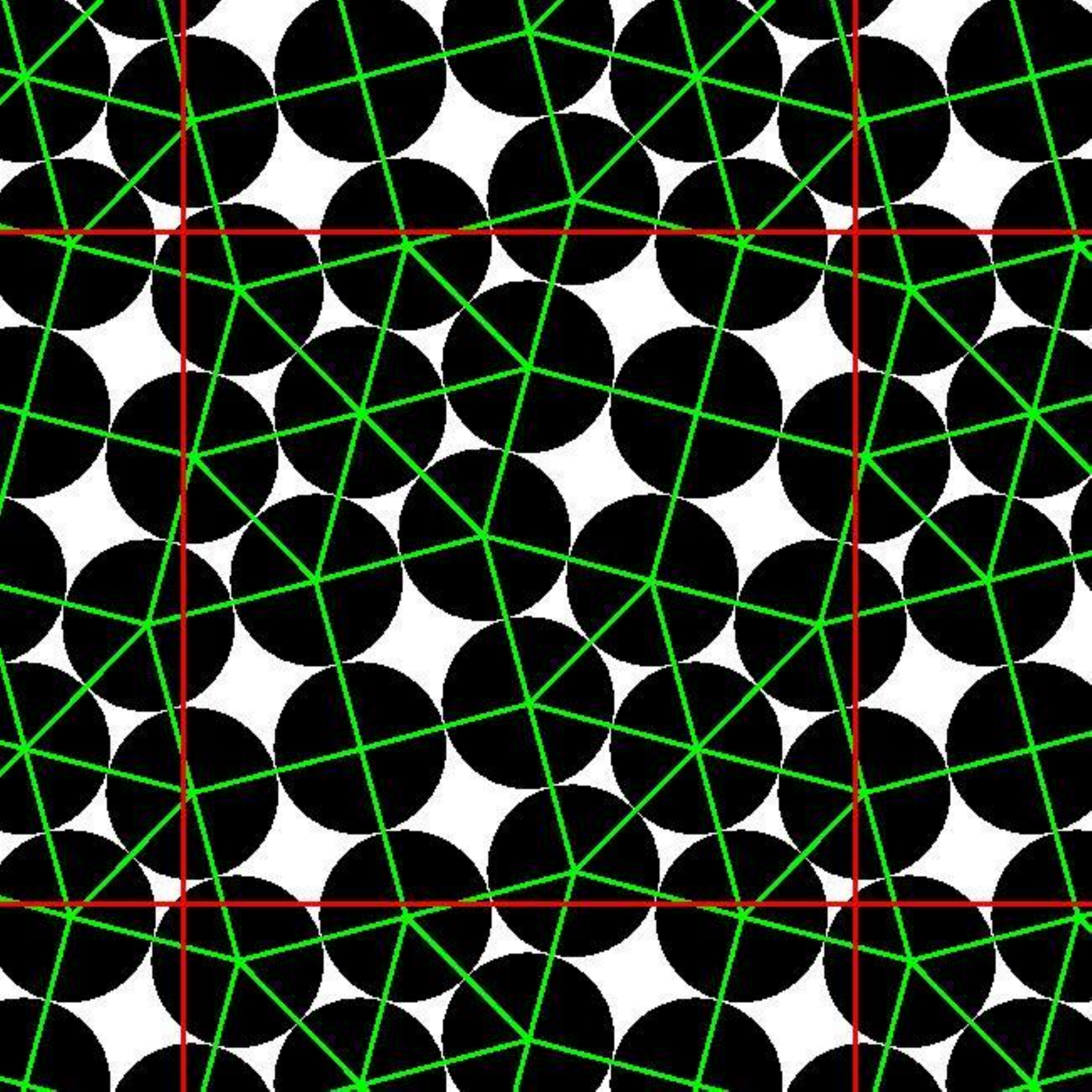}
        \caption{$N=16-2$}
        \label{fig:16-2}
         \end{minipage}
\end{figure}
\begin{center}
\begin{tabular}{|c|c|c|c|}
\hline
$N$ (disks) & $\delta$ (density) & $r$ (radius) & M(Markov)\\
\hline
$1$ & $0.785$ & $0.500$ &12.93\\
$2$ & $0.785$ & $0.354$ &6.464\\
$3$ & $0.631$ & $0.259$ &1.900\\
$4$ & $0.842$ & $0.259$ &6.031\\
$5$ & $0.785$ & $0.224$ &2.586\\
$6$ & $0.755$ & $0.200$ &1.712\\
$7$ & $0.736$ & $0.183$ &1.316\\
$8$ & $0.842$ & $0.183$ &3.010\\
$9$ & $0.835$ & $0.172$ &2.478\\
$10$ & $0.785$ & $0.158$ &1.280\\
$11$ & $0.806$ & $0.153$ &1.457\\
$12$ & $0.851$ & $0.150$ &2.231\\
$13$ & $0.788$ & $0.139$ &1.025\\
$14$ & $0.839$ & $0.138$ &1.619\\
$15$ & $0.890$ & $0.137$ &4.668\\
$16$ & $0.842$ & $0.129$ &1.393\\
\hline
\end{tabular}
\end{center}

Notice that the calculation for the Markov constant here for $N=15$ is $4.668$, which is somewhat less than than $6.24$ which is the exact constant as shown in Figure \ref{fig:Dense-plot}.  The accuracy of the numerical technique decreases as the number $N$ of the packing disks increases.  

Taking these calculations as evidence, we make the following statements as conjectures.  Note that the Markov constants for Type I and Type II packing converge monotonically,  decreasing to their limits of $3$ and $6$ respectively, so the first few examples provide the upper bounds.

\begin{conjecture} For a packing of $N \ge 6$ disks in a square torus, the Markov constant 
$
M \le 6.25
$.
In other words if the density of a packing of $N\ge 6$ disks is $\delta$, then
\[
 \frac{\pi}{2\sqrt{3}}-\delta > \frac{\pi}{2}\left(\frac{1}{6.25 N}\right).
\]
\end{conjecture}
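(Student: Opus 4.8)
The goal $M\le 6.25$ is, by (\ref{eqn:Markov1}), the same as the deficiency bound $\frac{\pi}{2\sqrt3}-\delta\ge\frac{\pi}{12.5\,N}$, so if $\frac{\pi}{2\sqrt3}-\delta\ge\frac{\pi}{12.5\,N}$ there is nothing to prove and I may assume $\delta$ lies within $O(1/N)$ of the optimum. In that regime a quantitative form of the observation that a nearly hexagonal packing has no interstice large enough to admit another disk shows the packing is saturated (and after the usual disk-expansion, rigid — expanding increases $\delta$ and hence $M$, so it suffices to bound $M$ for rigid packings). For a saturated packing of radius $r$ in the unit-area square torus I would fix a Delauney triangulation; by Euler's formula on the torus it has $N$ vertices, $3N$ edges and $2N$ triangles. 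Every edge has length $\ge 2r$ because the disks are disjoint, and every circumradius is $<2r$ by saturation, so the Chang--Wang lemma from the proof of Thue's theorem above shows each triangle $\triangle_i$ has area $\ge\sqrt3\,r^2$, with equality iff $\triangle_i$ is equilateral of side $2r$. Summing, and using $\sum_i\mathrm{area}(\triangle_i)=1$ and $\sum_i\tfrac{\pi r^2}{2}=\delta$,
\[
\frac{\pi}{2\sqrt3}-\delta=\frac{\pi}{2\sqrt3}\sum_{i=1}^{2N}\bigl(\mathrm{area}(\triangle_i)-\sqrt3\,r^2\bigr),
\]
so the conjecture becomes the purely geometric claim that the total area by which a Delauney triangulation of the square torus exceeds the ideal equilateral tiling is at least $\frac{\sqrt3}{6.25\,N}$.

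The cleanest route to this is via the grid-like classification of Section~\ref{section:Grid-like}, which already settles the main case. Using Gruber's stability assertion quoted after Theorem~10 of \cite{Gruber-optimal} in quantitative form, the near-optimal regime should force every vertex of the Delauney triangulation to have degree six; but a flat triangulation of the torus with all vertices of degree six is combinatorially a quotient of the triangular lattice by a finite-index sublattice, so the contact structure is grid-like and (with Propositions~\ref{prop:type-grid} and \ref{prop:lifting}) the configuration is a rigid grid-like packing. Then the concluding theorem of Section~\ref{section:Grid-like} gives $M_P<6.25$ directly, and Corollary~\ref{cor:II-bound} together with Remark~\ref{remark:Type-II-Markov} refine this: $M_P>3.59$ can occur only for even Type~II convergents, where $6.25>M_P>6$ and the bound $6.25$ comes from the first term of that sequence.

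An independent, and I think more robust, route is to upgrade Theorem~\ref{thm:limit} to a quantitative obstruction. Following the two essential homotopy classes of closed loops through the triangulated torus, the accumulated linear ``holonomy'' — how edge directions must turn and edge lengths must stretch in order for the loops to close up — is governed by the fact that $\Z^2$ contains no finite-index sublattice similar to the triangular lattice, and the size of this mismatch is controlled by the continued-fraction convergents of $1/\sqrt3$ and $2-\sqrt3$ studied in Subsection~\ref{section:Continued-Fractions}. Turning a lower bound for the mismatch into a lower bound for $\sum_i(\mathrm{area}(\triangle_i)-\sqrt3\,r^2)$ and optimizing should reproduce exactly the extremal behaviour of the even Type~II convergent packings (those approaching $M_P=6$ with the first few below $6.25$), which is where the constant $6.25$ in the conjecture comes from.

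The hard part is the middle step of each route. In the first, one must make Gruber's qualitative ``almost regular hexagonal'' statement quantitative enough to force degree-six vertices when the deficiency is only $O(1/N)$, and check that isolated lattice defects — a pentagon--heptagon dislocation dipole, or a short low-angle grain boundary on the torus — cannot produce a smaller area-excess than the grid-like ``seam'' of the Type~II convergents. In the second, one must extract the \emph{sharp} constant from the holonomy estimate and rule out exotic rigid packings whose contact graph is not a union of two closed geodesics, such as the rigid $65=8^2+1^2=7^2+4^2$ packing of Figure~\ref{fig:Dense-plot} or more intricate Gaussian-integer products. Both amount to pushing the unique-factorization argument of Section~\ref{section:Grid-like}, or a discrete Gauss--Bonnet estimate on the torus, well beyond the grid-like setting, and I expect that is why the statement is posed as a conjecture rather than proved.
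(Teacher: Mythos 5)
This statement is one of the paper's \emph{conjectures}: the authors offer no proof, only the numerical table for $6\le N\le 16$, the exact Markov constants of the grid-like families, and the classification theorem of Section~\ref{section:Grid-like}, which covers rigid \emph{grid-like} packings only. So there is no proof in the paper to compare yours against, and your proposal should be judged as a proof strategy. The bookkeeping at the start of your argument is sound and worth keeping: the reduction to deficiency $<\pi/(12.5N)$ does force saturation (a non-saturated packing could absorb an extra disk, giving density above $\frac{N+1}{N}\delta>\pi/\sqrt{12}$ once $2\sqrt3(N+1)<12.5N$, contradicting Thue), the Euler count $V=N$, $E=3N$, $F=2N$ on the torus is correct, and the identity $\frac{\pi}{2\sqrt3}-\delta=\frac{\pi}{2\sqrt3}\sum_i\bigl(\mathrm{area}(\triangle_i)-\sqrt3\,r^2\bigr)$ correctly converts the conjecture into a lower bound of $\sqrt3/(6.25\,N)$ on the total area excess of the Delauney triangles.

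The genuine gap is the one you flag yourself, but it is worth naming more precisely, because it is not merely a missing quantitative constant --- it is a missing structural theorem. Your first route asserts that small deficiency forces all Delauney vertices to have degree six, and that a degree-six triangulation of the torus makes the packing grid-like. Neither implication is available. Gruber's remark after his Theorem~10 is qualitative and, even granted in quantitative form, controls the triangulation, not the \emph{contact} graph: the packing graph is the subgraph of the Delauney triangulation consisting of edges of length exactly $2r$, and which edges those are is exactly the content of the paper's own (unproven) subgraph conjecture in Section~\ref{section:Flexing}. A combinatorial quotient of the triangular lattice does not yield two closed geodesics of contacts; indeed all edges being contacts would force every triangle equilateral, which Theorem~\ref{thm:limit} forbids on the square torus, so the contact graph is necessarily a proper subgraph and could in principle organize itself around a pentagon--heptagon defect or a grain boundary rather than the Type~II ``seam.'' Consequently the concluding theorem of Section~\ref{section:Grid-like}, Corollary~\ref{cor:II-bound}, and Remark~\ref{remark:Type-II-Markov} cannot be invoked: they bound $M_P$ only for rigid grid-like packings, while the conjecture quantifies over \emph{all} packings of $N\ge6$ disks (including, e.g., the non-grid-like optimizers for $N=6,7,9$). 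Your second, holonomy-based route has the same status: the obstruction of Theorem~\ref{thm:limit} is qualitative, and extracting from it a sharp $\Omega(1/N)$ lower bound on area excess with the constant $6.25$ is precisely the open problem. So the proposal is a reasonable reduction plus a program, not a proof, which is consistent with the authors leaving the statement as a conjecture.
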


Notice that the corresponding statement for the planar square is true as mentioned in the introduction from \cite{Gruber-optimal}. 

The first part of the next conjecture essentially concerns number theory in that it has to do with what numbers can be written as the sum of two squares in two ways.  See the remark in Figure \ref{fig:Dense-plot}.  The other rigid packings with the centers on two geodesics can be obtained by regarding the generating vectors as Gausian integers in the complex plane obtained by multiplying the Type I (or Type II) generators by another Gaussian integer.

 \begin{conjecture} If a rigid packing of $N \ge 6$ disks in a square torus is such that its graph consists of two  linear geodesics, and its Markov constant $M > 3$, then it is a Type I or a Type II packing, and it is maximally dense.
\end{conjecture}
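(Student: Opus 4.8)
Since the hypothesis ``the graph consists of two linear geodesics'' is exactly the grid-like condition, Proposition \ref{prop:lifting} lets me write the generators as $z_1 = w z_1'$, $z_2 = w z_2'$, where $z_1', z_2'$ generate a Type I, Type II, or square-grid packing with $N_0 = \N(z_1')$ disks and $w\in\Z[i]$ has no real prime factor and no prime factor in common with $z_1'$ or $z_2'$. Lifting multiplies the disk count by $\N(w)$ and divides the Markov constant by $\N(w)$, so $N = N_0\N(w)$ and $M = M_0/\N(w)$ with $M_0$ the base Markov constant. First I would dispose of the square-grid base: every square grid (and every lift of one) has density $\pi/4$, hence Markov constant $12.93/N$, which is $<3$ for $N\ge 6$, contradicting $M>3$. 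If the base is Type I, Remark \ref{remark:Type-II-Markov} gives $M_0<3.54$, so $\N(w)\ge 2$ would force $M<1.77<3$; hence $\N(w)=1$ and the packing is itself Type I. If the base is Type II, Corollary \ref{cor:II-bound} gives $M_0<6.25$, so $\N(w)\ge 3$ would force $M<2.09<3$; hence $\N(w)\in\{1,2\}$, and when $\N(w)=2$ the element $w$ is an associate of $1+i$ (the only Gaussian prime of norm $2$), so by the computation recorded just before the structure theorem of Section \ref{section:Grid-like}, multiplying the generators of a Type II packing by $1+i$ yields a Type I packing (no spurious factor of $2$ appears, since the generators of a rigid Type II packing are coprime in $\Z[i]$). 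This establishes the first conclusion: the packing is a Type I or Type II packing.

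To make the second conclusion approachable I would next pin down \emph{which} Type I or Type II packing occurs, sharpening Remark \ref{remark:Type-II-Markov} and Corollary \ref{cor:II-bound} from their thresholds ($3.54$, $3.59$) down to $M>3$. Using the explicit relation between the packing Markov constant $M_P$ and the number-theoretic constant $M_\# = 1/(a^2|\alpha - b/a|)$, together with the standard fact that the convergents of $1/\sqrt3$ from above (respectively of $2-\sqrt3$ from below) are its one-sided best rational approximations, one shows that a non-convergent Type I or Type II packing with denominator beyond a small bound has $M$ far below $3$ (the continued-fraction estimate gives $M\lesssim 1.35$), and one checks the finitely many small $(a,b)$ by hand. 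Hence $b/a$ is an odd convergent of $1/\sqrt3$ (Type I, $3<M<3.13$) or an even convergent of $2-\sqrt3$ (Type II, $6<M<6.25$), so for each eligible $N$ there is essentially one candidate packing, of explicitly known density.

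The remaining clause --- that this packing is maximally dense --- is the real obstacle, and is where the statement is genuinely a conjecture. Competing \emph{rigid grid-like} packings of the same $N$ are excluded by the previous paragraph (one compares the finitely many candidate densities $\tfrac\pi8(\tfrac ab+\tfrac ba)$ or $\tfrac\pi4\tfrac{a^2+b^2}{a^2-b^2}$ directly). What is missing is a way to rule out packings whose contact graph is not grid-like: this would follow from a quantitative version of the stability assertion attributed to Gruber in Section \ref{section:Flexing} --- that a square-torus packing with density sufficiently near $\pi/\sqrt{12}$ has contact graph contained in a degree-six triangulation of the torus, and, at the Markov values occurring here, is grid-like --- which would reduce everything to the case already handled. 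Proving such a stability theorem (essentially the triangulation-subgraph conjecture at the end of Section \ref{section:Flexing}) is the hard part; without it one is reduced to case-by-case optimality verifications, which at present are out of reach for $N$ past the numerically accessible and \cite{Musin}-type range (for instance, optimality for $N=15$ is itself still only conjectured).
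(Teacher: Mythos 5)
The statement you are trying to prove is one of the paper's \emph{conjectures}: the paper itself offers no proof, so there is nothing to match your argument against, and the real question is whether your proposal closes it. It does not, and you say so yourself. Your first paragraph is essentially the paper's own (unnumbered) structure theorem for rigid grid-like packings in Section \ref{section:Grid-like}, pushed from the threshold $3.59$ down to $3$: the reduction via Proposition \ref{prop:lifting} to a Type I, Type II, or square-grid base; the elimination of the square-grid base and of any lift with $\N(w)\ge 2$ over a Type I base or $\N(w)\ge 4$ over a Type II base by dividing the bounds of Remark \ref{remark:Type-II-Markov} and Corollary \ref{cor:II-bound} by $\N(w)$; and the observation that the one surviving nontrivial lift ($\N(w)=2$, i.e.\ $w$ an associate of $1+i$ over a Type II base) is itself a Type I packing. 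This all goes through modulo the constants $3.54$ and $6.25$, which the paper itself establishes only semi-rigorously (the monotonicity claims in the proof of Corollary \ref{cor:II-bound} are asserted, not proved). Your second paragraph is shakier: the claimed bound $M\lesssim 1.35$ for non-convergent packings is not derived, and the passage from ``$M_{\#}>2$'' to ``$b/a$ is a convergent'' requires the conversion factor between $M_P$ and $M_{\#}$ to be controlled uniformly at the weaker threshold $M_P=3$, which you do not carry out.

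The genuine gap is the final clause, ``and it is maximally dense,'' and it cannot be repaired with the tools available in the paper. Identifying the unique candidate grid-like packing for a given $N$ says nothing about competing packings whose contact graphs are \emph{not} two geodesics; excluding those requires exactly the quantitative stability statement you point to (that density sufficiently close to $\pi/\sqrt{12}$ forces the contact graph into a degree-six triangulation and, at these Markov values, forces it to be grid-like), which in the paper is only an informal remark attributed to Gruber plus the separate triangulation conjecture of Section \ref{section:Flexing}. Even in the smallest cases covered by the hypothesis, optimality is settled only for $N=8$ (Musin--Nikitenko), while $N=15$ remains conjectural. What you have is a correct and useful reduction of the conjecture to its maximal-density clause --- a legitimate observation that the structural half follows from Section \ref{section:Grid-like} --- but it is not a proof of the statement.
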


A dynamical phenomenon, is that as the disks in the torus grow and collide, there seems to be a point where they stop moving long distances and only vibrate toward a stable rigid configuration.  The configuration seems to be frozen, but not rigid.  This leads to another conjecture.

 \begin{conjecture} For any $N=2, 3, \dots$, there is an $\epsilon(N) \rightarrow 0$, such that for any packing of $N$ disks in a square torus with Markov constant $M > 3$, the packing is within $\epsilon(N)$ of a most most dense packing. 
\end{conjecture}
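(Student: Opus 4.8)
The plan is to argue by contradiction, combining a local-limit (compactness) argument with the Delaunay-triangulation proof of Thue's theorem recorded above and the stability statement sketched by Gruber \cite{Gruber-optimal} after his Theorem 10. First I would rescale each packing so that its disks have radius $1$; a packing of $N$ disks then lives on a square torus of side $L(N)=\sqrt{\pi N/\delta}$, and the hypothesis $M>3$ becomes the estimate $\frac{\pi}{2\sqrt3}-\delta<\frac{\pi}{6N}$, so the density error is $O(1/N)$. The relevant (non-trivial) distance on configurations is Hausdorff distance measured in units of the disk radius, minimized over the isometries of the square torus (and over labellings); with disks blown up to unit radius this does not automatically tend to $0$ as $N\to\infty$. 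Suppose the conclusion fails: there are $\epsilon_0>0$, integers $N_k\to\infty$, and packings $\p_k$ of $N_k$ unit disks with Markov constant $>3$ whose distance to the set $\mathcal O_{N_k}$ of densest $N_k$-disk configurations is at least $\epsilon_0$.

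Next I would pass to limits. Since $M>3$ forces $\delta(\p_k)\to\frac{\pi}{2\sqrt3}$ and $\delta(N_k)\ge\delta(\p_k)$, the optimal densities also tend to $\frac{\pi}{2\sqrt3}$; viewing $\p_k$ and a nearest element of $\mathcal O_{N_k}$ as locally finite point sets in $\R^2$ via the universal cover and passing to a subsequence, both converge in the local topology (pointed Hausdorff convergence of point sets) to infinite unit-disk packings of the plane of density exactly $\frac{\pi}{2\sqrt3}$. By the Chang--Wang rigidity inside the proof of Thue's theorem above --- in a saturated planar packing of maximal density every Delaunay triangle is equilateral of side $2$ --- each such limit is the regular hexagonal lattice. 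Hence $\p_k$ and the optimal configurations become locally indistinguishable, and the entire issue is whether they are \emph{globally} close.

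The quantitative engine for controlling the global picture is a stable form of the triangle lemma (a quantitative version of the stability Gruber sketches). On the $N_k$-torus the Delaunay triangulation has $N_k$ vertices, $3N_k$ edges and $2N_k$ triangles (Euler characteristic $0$), and the identity $\sum_i A_i\!\left(\frac{\pi}{2\sqrt3}-\delta_i\right)=L^2\!\left(\frac{\pi}{2\sqrt3}-\delta\right)=O(1)$ holds, where $A_i,\delta_i$ are the area and density of the $i$-th triangle. I would establish a universal bound $\frac{\pi}{2\sqrt3}-\delta_i\ge c\,\tau_i^2$, with $\tau_i$ an appropriately normalized and truncated measure of how far triangle $i$ is from equilateral of side $2$ --- a Taylor estimate of the smooth map (side lengths) $\mapsto$ density near its maximum on the region (all sides $\ge 2$, largest angle $<2\pi/3$) relevant to saturated configurations. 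This yields $\sum_i\tau_i^2=O(1)$ uniformly in $k$, hence all but a bounded number of Delaunay triangles are $o(1)$-close to equilateral; combined with the local convergence, $\p_k$ is, up to an $o(1)$ perturbation and a torus isometry, a finite piece of the hexagonal pattern carrying at most $O(1)$ localized ``defects,'' and the same holds for the optimal configuration.

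The hard part --- and the reason this remains a conjecture --- is the purely global step: passing from ``locally hexagonal with boundedly many defects'' to ``within $\epsilon(N_k)\to 0$ of a densest configuration.'' Two enemies survive the triangle count. First, a slowly varying shear of relative strain of order $N^{-1/2}$ distorts each triangle by only $O(N^{-1/2})$ and so contributes only $O(1)$ to $\sum_i\tau_i^2$, yet it moves points by $O(1)$ in disk-radius units, so it is invisible to the argument above but not to the metric. Second, the $O(1)$ defects can in principle be rearranged over long distances without density cost at this precision. Ruling these out forces one to understand, for each $N$, which tori of area $\sim N$ carry an almost-perfect hexagonal pattern and with what admissible defect set --- precisely the arithmetic of sums of two squares and the Gaussian-integer classification of grid-like packings of Section~\ref{section:Grid-like}, and the content of the two preceding conjectures. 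Absent that input, a fixed-$N$ compactness argument (the level sets $\{\mathbf p:\min_{i\ne j}\mathrm{dist}_T(p_i,p_j)\ge t\}$ are compact and decrease to $\mathcal O_N$ as $t$ rises to its maximum) already supplies, for each individual $N$, \emph{some} $\epsilon(N)$ for which the statement holds; it is exactly the uniform decay $\epsilon(N)\to 0$ that is the new asymptotic content being conjectured.
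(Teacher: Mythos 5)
This statement is one of the paper's open conjectures: the authors give no proof of it, so there is no ``paper proof'' to compare against --- and, by your own concluding admission, your proposal is not a proof either. What you have written is a serious and largely sound reduction. The rescaling, the translation of $M>3$ into $\frac{\pi}{2\sqrt{3}}-\delta<\frac{\pi}{6N}$, the count of $N$ vertices, $3N$ edges and $2N$ Delaunay triangles on the torus, and the identity $\sum_i A_i\bigl(\frac{\pi}{2\sqrt{3}}-\delta_i\bigr)=L^2\bigl(\frac{\pi}{2\sqrt{3}}-\delta\bigr)=O(1)$ are all correct, and a per-triangle stability estimate in the spirit of Gruber is plausible (though you would still have to prove it; Gruber only sketches the analogous covering statement). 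But the argument stops exactly where the conjecture begins. The two obstructions you name are real and, as things stand, fatal to this line: an area-preserving shear of strain $O(N^{-1/2})$, corrected by a dilation to restore the minimum distance, costs only $O(1)$ in the triangle-defect sum yet displaces centers by $O(1)$ disk radii over distances of order $L\sim\sqrt{N}$; and a bounded number of localized defects can be relocated freely at this energy resolution. Excluding them requires knowing, for each $N$, which near-hexagonal patterns a square torus of area $\sim N$ actually admits --- precisely the arithmetic content of the paper's other conjectures and of the Gaussian-integer classification of grid-like packings. Until that input is supplied, no decaying $\epsilon(N)$ is produced, so the proposal should be read as a reduction of the conjecture to a quantitative global rigidity statement, not as a proof.

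Two smaller points. First, the claim that the pointed local limits of the packings have density exactly $\pi/\sqrt{12}$ is not automatic: a subsequential local limit taken around a fixed basepoint can have strictly smaller density if a defect happens to sit near that basepoint. You need an averaging argument to choose basepoints at which the local density defect is controlled, or you should state the conclusion as holding for all but $o(N)$ of the disks. Second, the conjecture is asserted for every $N\ge 2$, not just asymptotically; for each fixed $N$ the existence of \emph{some} $\epsilon(N)$ is trivial by compactness of the configuration space, and you correctly identify that the entire content is the uniform decay $\epsilon(N)\to 0$ --- which is exactly the part your argument does not establish.
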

The distance between configurations can be taken with respect to any reasonable metric on the space of configurations, for example the Euclidean metric using all the coordinates of a configuration.

\bibliographystyle{plain}
\bibliography{framework}

\end{document}